\def\Sym{\operatorname{Sym}}
\def\L{\mathcal L}
\def\J{\mathcal J}
\def\m{\mathfrak m}
\def\R{\mathcal R}
\def\F{\mathcal F}
\def\min{{\mathrm {min}}}
\def\max{{\mathrm {max}}}
\def\det{{\mathrm {det}}}
\def\init{{\mathrm {in}}}
\def\Spec{{\mathrm {Spec}}}
\def\Tor{{\mathrm {Tor}}}
\newtheorem{theorem}{Theorem}[section]
\newtheorem{lemma}[theorem]{Lemma}
\newtheorem{corollary}[theorem]{Corollary}
\newtheorem{criteria}[theorem]{Criteria}
\newtheorem{prop}[theorem]{Proposition}
\newtheorem{example}[theorem]{Example}
\newtheorem*{theorem*}{Theorem}
\newtheorem*{mthm*}{Main Theorem}
\newtheorem*{rep@theorem}{\rep@title}
\newcommand{\newreptheorem}[2]{%
\newenvironment{rep#1}[1]{%
 \def\rep@title{#2 \ref{##1}}%
 \begin{rep@theorem}}%
 {\end{rep@theorem}}}
\newtheorem{def&dis}[theorem]{Definition and Discussion}
\newtheorem{definition}[theorem]{Definition}
\newtheorem{remark}[theorem]{Remark}
\newtheorem{data}[theorem]{Data}
\numberwithin{equation}{theorem}
\begin{document}

\title{Rees algebras of ideals submaximally generated by quadrics}

\author{Whitney Liske}
\address{Department of Mathematics, Saint Vincent College 
300 Fraser Purchase Rd, Latrobe, PA 15650}
\email{whitney.liske@stvincent.edu}
\thanks{The author was partially supported by Mathematical Endeavors Revitalization Program of the Association for Women in Mathematics}

\begin{abstract}
The goal of this paper is to study the Rees algebra $\R(I)$and the special fiber ring $\F(I)$ for a family of ideals. 
Let $R=\mathbb{K}[x_1, \ldots, x_d]$ with $d\geq 4$ be a polynomial ring with homogeneous maximal ideal $\m$. 
We study the $R$-ideals $I$, which are $\m$-primary, Gorenstein, generated in degree 2, and have a Gorenstein linear
resolution. 
In the smallest case, $d=4$, this family includes the ideals of $2\times 2$ minors of a general $3\times 3$ matrix of linear forms in $R$.
We show that the defining ideal of the Rees algebra will be of fiber type. 
That is, the defining ideal of the Rees algebra is generated by the defining ideals of the special fiber ring and of the symmetric algebra. 
We use the fact that these ideals differ from $\m^2$ by exactly one minimal generator to describe the defining ideal $\mathcal{F}(I)$ as a sub-ideal of the defining ideal of $\mathcal{F}(\m^2)$, which is well known to be the ideal of $2\times 2$ minors of a symmetric matrix of variables.
\end{abstract}
\maketitle

\section{Introduction}

In this paper, we will study the Rees algebra $\R(I)=R[It]$ and the special fiber ring $\mathcal{F}(I)=\R(I)\otimes \mathbb{K}$ of an ideal $I$ in a polynomial ring $R$ over a field $\mathbb{K}$.
Rees algebras are an important tool in commutative algebra, algebraic geometry, elimination theory, intersection theory, and geometric modeling. 
They hold asymptotic information about the powers of an ideal.
Rees algebras also provide a natural way to look at the multiplicity of a ring. 
They are used in algebraic geometry because they define the blowup of the scheme $\Spec(R)$ along the closed subscheme $V(I)$.
In addition, the special fiber ring gives the coordinate ring of algebraic varieties, such as Segre, Veronese, Grassmannian, Pfaffian, and determinantal varieties. 
These classical algebraic varieties are of fundamental importance as they are ubiquitous and play an important role in almost all fields of mathematics as well as in theoretical computer science, complexity theory, signal processing, phylogenetics, and algebraic statistics.
In particular, the ideals studied in this paper correspond to projections of degree two Veronese varieties.

One of the main problems in this area is to describe $\R(I)$ and $\mathcal{F}(I)$ in terms of generators and relations.
That is, find polynomial rings $S=R[w_1, \ldots, w_n]$ and $W=\mathbb{K}[w_1, \ldots, w_n]$ and ideals $\J$ and $I(X)$ such that $\R(I)=S/\J$ and $\mathcal{F}(I)=W/I(X)$.
We call $\J$ and $I(X)$ the defining ideals of $\R(I)$ and $\mathcal{F}(I)$, respectively.
If the ideal $I$ is generated in the same degree, then the generators define a rational map between projective spaces.
The Rees algebra and special fiber ring will then be the coordinate rings of the graph and image of this map, respectively.
By computing the defining ideals we find the implicit equations of the variety defined by the graph and image.
This particular problem is also studied by applied mathematicians, where it is known as the implicitization problem (for example, \cite{BJ}, \cite{C}, \cite{EM}).

Indeed, this is a very challenging problem and is still open for many classes of ideals that seem innocent at first glance.
Significant restrictions typically need to be placed on the ideal in order to compute the defining ideal of the Rees algebra.
When the ideal is defined by maximal minors of a generic matrix, the special fiber ring is classically known to be the coordinate ring of a Grassmannian variety, and is known to be defined by the Plucker relations.
When the minors are nonmaximal, much less is known.
In  \cite{BCV}, Bruns, Conca, and Varbaro use representation theory on the general linear group to compute some degree two and three relations on the $t$-minors of a generic matrix and conjectured these defined the fiber ring for certain matrix sizes. 
This conjecture was later proven for $t=2$ in \cite{HPPRS} by Huang, Perlman, Polini, Raicu and Sammartano.

There has been a significant body of work studying the case of height two perfect ideals.
The defining ideals are given by Morey and Ulrich in this case when $I$ is linearly presented and satisfies $G_d$ (a condition on the height of fitting ideals) (\cite{MU}).
These results were later generalized, typically by studying either ideals that are not linearly presented ( \cite{KPU.R}, \cite{L}) or do not satisfy $G_d$ ( \cite{M}, \cite{H}), but there are still open cases.
Another important generalization of these results, studied by Kustin, Polini, and Ulrich in \cite{KPU.T}, are linearly presented height three Gorenstein ideals satisfying $G_d$.
The proof of this result requires local cohomology techniques, which they developed in \cite{KPU.D}.


This paper will focus on certain Gorenstein ideals generated by quadrics studied in a 2015 paper by Hong, Simis, and Vasconcelos \cite{HSV}. 
Let $R=\mathbb{K}[x_1, \ldots, x_d]$ be a polynomial ring in $d\geq 4$ variables over a field $\mathbb{K}$. 
Let $\m$ be the homogenous maximal ideal of $R$. 
An ideal $I$ of $R$ generated by quadrics is said to be {\em submaximally generated} if it has codimension $d$ and is minimally generated by $n=\nu(\m^2)-1 $ quadrics.
Theorem 3.5 in \cite{HSV} shows that, after a potential change of variables,  $I =<x_ix_j , x_k^2-x_d^2| i\neq j , k\neq d>$. 
Further, these ideals are $\m$-primary, Gorenstein, generated in degree 2, and have a Gorenstein linear resolution. 
In the smallest case, $d=4$, this family includes the ideals of $2\times 2$ minors of a general $3\times 3$ matrix of linear forms in $R$.
Our goal is to compute the defining ideals of the special fiber ring and Rees algebra for these ideals.

Ideals submaximally generated by quadrics $I$ have a Gorenstein linear resolution, which can be verified by computing the socle degree of $R/I$ in \cite{HSV}.
Therefore, by a result of Eisenbud, Huneke, and Ulrich \cite{EHU}, it follows that the Rees algebra is of fiber type. 
This means that the defining ideal of the Rees algebra is determined by the defining ideals of the symmetric algebra and special fiber ring. 
Since the defining ideal of the symmetric algebra can be computed from a minimal homogeneous presentation matrix of the forms $g_{ij}=x_ix_j$ if $i\neq j$ and $g_{ii}=x_i^2-x_d^2$ which generate $I$, we need only compute the defining ideal of the special fiber ring. 

\Cref{background} will formalize the definitions and notations used throughout the paper. 
We construct a candidate $\Lambda$ for the defining ideal of the special fiber ring $\F(I)$ in \Cref{candidate}.
This ideal is designed to mimic the defining ideal of $\F(\m^2)$ as much as possible, in particular, its generators will be linear combinations of $2\times 2 $ minors of a symmetric matrix of variables. 
We also show in \Cref{candidate} that we have $\Lambda\subseteq I(X)$.

To show that these ideals are, in fact, equal we compute invariants for $I(X)$, the defining ideal of $\mathcal F(I)$, by exploiting the connection to $\mathcal F(\m^2)$ in  \Cref{sec:fiber}.
We study $\Tor_1^W(I(X),\mathbb{K})$ in this Section to conclude that $I(X)$ is generated in degrees two and three. 
The main theorem of the paper will then follow once we have shown the equality of $I(X)$ and $\Lambda$ in degrees two and three.
We do this by showing the Hilbert function of $I(X)$ and $\Lambda$ agree in those degrees.
Recall that if $\mathbb{K}$ is a field and $A=\displaystyle \oplus_{i\geq 0} A_i$ is a graded $\mathbb{K}$-algebra, the \emph{Hilbert function} of $A$ in degree $i$ is $HF_A(i)=\dim_\mathbb{K} A_i$. 
While the Hilbert function of $I(X)$ is computed in  \Cref{sec:fiber}, the computation for the candidate $\Lambda$ is more involved.
However, although it is laborious, it is mostly straightforward.
The degree two calculation is in  \Cref{hf2} and the degree three calculation is in  \Cref{hf3}.
In particular, the proving strategy will presented in this paper shows two $\mathbb{K}$ vector spaces given by $I(X), \Lambda$ , one contained in the other, which will agree in the generating degrees of the larger ideals. 
This implies the equality.

The main result is summarized below.

\begin{mthm*}[\Cref{maintheorem1}]
Let $R=\mathbb{K}[x_1, \ldots, x_d]$ be a polynomial ring in $d\geq 4$ variables over a field $\mathbb{K}$ of characteristic zero. 
Let $I =<x_ix_j , x_k^2-x_d^2 \text{ s.t. } i\neq j , k\neq d>$ be a Gorenstein $R$-ideal submaximally generated by quadrics.
Let $\mathcal M=[w_{ij}]$ be a $d\times d$ symmetric matrix with $w_{ij}=w_{ji}$ and $w_{dd}=0$.
We describe the generators of an ideal $\Lambda \subsetneq I_2(\mathcal M)$.
We then show $\Lambda$ is the defining ideal of $\mathcal F(I)$, that is $\Lambda=I(X)$.
 Further $\J=\L+ \Lambda S$ is the defining ideal of $\R(I)$. 
 \end{mthm*}

\section{Background}\label{background}
The purpose of this section is to formalize the definitions and notation used throughout the paper. 
 \Cref{familydata} will set the notation used for the family of ideals studied in this paper and will be consistently adopted in the paper.
We also reference theorems about the blowup algebras and the defining ideal of $\F(\m^2)$, which is well understood.

\begin{data}\label{familydata}
Let $R=\mathbb{K}[x_1, \ldots, x_d]$ be a polynomial ring in $d\geq 4$ variables over a field $\mathbb{K}$ of characteristic zero. 
Let $\m =(x_1, \ldots, x_d)$ be the homogenous maximal ideal of $R$. 
Let \[I =<x_ix_j , x_k^2-x_d^2 \text{ s.t. } i\neq j , k\neq d>\] be a Gorenstein $R$-ideal submaximally generated by quadrics.
Let $W,S$ be the polynomial rings defined below
\[W=\mathbb{K}[w_{ij} ] /<w_{ij}-w_{ji} , w_{dd} \text{ s.t. } i\neq j \in \{1, \ldots, d\}>\]
\[S=R[w_{ij}]/<w_{ij}-w_{ji}, w_{dd} \text{ s.t. } i\neq j \in \{1, \ldots, d\}>.\]
Let $\J$ be the defining ideal of $\R(I)$, $\L$ the defining ideal of $\Sym(I)$, and $I(X)$ be the defining ideal of $\F(I)$.
That is, $\R(I)\cong S/ \J$, $\Sym(I)\cong S/ \L$, and $\F(I)\cong W/I(X)$.
Let $\mathcal M=[w_{ij}]$ be the $d\times d $ symmetric matrix with $w_{ij}=w_{ji}$ and $w_{dd}=0$.
\end{data}

As an abuse of notation we may write $W=\mathbb{K}[w_{ij}]$ and $S=R[w_{ij}]$, keeping the understanding that $w_{ij}=w_{ji}$ and $w_{dd}=0$.
In this paper, if the relative size of $i,j$ is known, we typically write the variables as $w_{ij}$ with $i\leq j$.

We note that defining ideal of the symmetric algebra can be easily read from the presentation matrix of the ideal $I$.
Let $g_{ij}=x_ix_j$ if $i\neq j$ and $g_{ii}=x_i^2-x_d^2$ be the minimal generators of $I$, listed above.
Let  $\underline{x}=[x_1, \ldots, x_d]$, $\underline{w}=[w_{11}, \ldots w_{(d-1) d}]$, and $\underline{g}=[g_{11}, \ldots, g_{(d-1)d}]$  be row vectors such that the indices of $\underline{w}, \underline{g}$ are listed in the same order.
Let $\vartheta$ be a minimal homogenous syzygy matrix of $\underline{g}$. 
The defining ideal for the symmetric algebra is well known to be $\L=I_1(\underline{w} \cdot \vartheta)$.

We plan to use information about the special fiber ring of $\m^2$ to better understand the special fiber ring of $I$.
To do this, we will need analogous notation to \Cref{familydata} for $\m^2$.
We also need maps relating the rings corresponding to $I$ and $\m^2$.
\begin{data}[Analogue for $\m^2$]
Let $U= \mathbb{K}[u_{ij}]/< u_{ij}-u_{ji} \text{ s.t. } i\neq j \in \{1, \ldots, d\}>$ be a polynomial ring and let $I(Y)$ be the defining ideal of $\F(\m^2)$.
Let $\mathcal N=[u_{ij}]$ be the $d\times d$ symmetric matrix of variables with $u_{ij}=u_{ji}$.
Define the maps $\epsilon$, $\varphi_U$, and $\varphi_W$ according to the commutative diagram below with $\epsilon(w_{ij})=u_{ij}$ if $i\neq j$, $\epsilon(w_{ii})=u_{ii}-u_{dd}$, $\varphi_U(u_{ij})=x_i x_j$, and $\varphi_W(w_{ij})=g_{ij}$.
Notice that $\varphi_U \circ \epsilon=\varphi_W$.

\begin{center}
\begin{tikzcd}
0\arrow[r] & I(X) \arrow[r] \arrow[d] & W \arrow[d, "\epsilon" , hook] \arrow[r,  "\varphi_W"] & \F(I) \arrow[r] \arrow[d, "i", hook] &0\\
0\arrow[r] & I(Y) \arrow[r] & U\arrow[r,  "\varphi_U"]& \F(\m^2) \arrow[r] &0
\end{tikzcd}
\end{center}
\end{data}

Notice that $\F(\m^2)$ is the coordinate ring of a degree two Veronese variety.
The defining ideal of $\mathcal F(\m^2)$ is well known (for example \cite{Con}, \cite{LP}) to be $I_2( \mathcal N)$.
One might hope that the defining ideal of $\F(I)$ might be defined from $2\times 2$ minors of a similarly constructed matrix, such as $\mathcal M$, which is the preimage of $\mathcal N$ under $\epsilon$. 
While $I(X)\neq I_2(\mathcal M)$, by factoring the map $\varphi_W$ through $\varphi_U$, we can gain insights into the elements of $I(X)$.  
In particular, we will determine a criterion which guarantees that a polynomial $f\in W$ will be an element of $I(X)$, the defining ideal of $\F(I)$. 
Notice that $\mathcal M=\epsilon^{-1}(\mathcal N-u_{dd}\text{I}_\text{d})$ where $\text{I}_\text{d}$ is the $d\times d $ identity matrix.
Indeed, the defining ideal of $\mathcal F(I)$ is given by $I(X)=Ker(\varphi_W)=Ker(\varphi_U\circ \epsilon)=\epsilon^{-1}(Ker(\varphi_U)\cap Im(\epsilon))$which gives rise to the following criteria to check if an element is in $I(X)$.
This criteria will only help us to show our upcoming candidate $\Lambda$ is contained in $I(X)$.
Other techniques will be required to show that these ideals are equal.

\begin{criteria}\label{c}
If $f \in W$ with  $\epsilon(f)\in Ker(\varphi_U)=I_2(\mathcal N)$ then $f\in I(X)=Ker(\varphi_W)$.
\end{criteria}

The elements $f\in W$ that will be explored are linear combinations of the natural generators of $I_2(\mathcal M)$ crafted to satisfy \Cref{c}.
In particular, these elements will depend on the interaction between the minor and the diagonal of $\mathcal M$ and will be described  in \Cref{candidate}.
We will now set the notation for the minors of $\mathcal M, \mathcal N$, taking special care to describe how $2\times 2$ minors interact with the diagonal. 

\begin{definition}\label{def:matrixNotation}
Let $\mathcal B$ be a $d\times d$ symmetric matrix of variables. 
\begin{itemize}
\item Let $m^{\mathcal B}=[a_1 a_2\ldots a_r; b_1b_2\ldots b_c ]^{\mathcal B}=[\underline{ a};\underline{ b}]^{\mathcal B}$ be the submatrix of $\mathcal B$ obtained from selecting rows $\underline{a}=\{a_1,a_2, \ldots, a_r\}$ and columns $\underline{ b}=\{b_1, b_2, \ldots, b_c\}$. Let $\det(m^{\mathcal B})=[\underline{ a}|\underline{ b}]^{\mathcal B}$ be the corresponding minor when $r=c$.
\item If the rows and columns are equal, that is $\underline{ a}=\underline{ b}$, we write $[\underline{ a}; ]^{\mathcal B}$ for the principal submatrix and $[\underline{a}]^{\mathcal B}$ for the corresponding principal minor. 
\item For $s\in \{0,1,2\}$, let $A_s=\{[ij|kl]^{\mathcal M} \text{ s.t. }|\{i,j,k,l\}|=4-s\}$. That is, the subset of the set of $2\times 2$ minors of $\mathcal{M}$ containing of exactly $s$ elements from the diagonal of $\mathcal M$. Equivalently,  $A_s=\{[ij|kl]^{\mathcal M} \text{ s.t. }|\{i,j\}\cap\{k,l\}|=s\}$.
\item We say that the $2\times 2$ submatrices $m^{\mathcal B}$ and $ n^{\mathcal B}$ are \emph{complementary} if there is a $4\times 4$ principal submatrix $[ijkl;]^{\mathcal B}$ containing $m^{\mathcal B}$ and $ n^{\mathcal B}$ where they share no rows and no columns. We say that minors are \emph{complementary} if they are obtained from complementary submatrices.
\end{itemize}
The superscript of $^{\mathcal B}$ may be omitted throughout the paper when clear from context. 
\end{definition}

It is important to note that the sign of a minor obtained from a principal submatrix will always be positive since if we remove column $i$, we also remove row $i$ which will contribute $(-1)^{i+i}=1$ to the sign of the minor.
Similarly, if we are taking a minor of any principal submatrix of $\mathcal{M}$ or $\mathcal{N}$, the sign of that minor can be determined directly from the principal submatrix.
We also observe that the set of diagonal entries of $[\underline{a}; ]^{\mathcal B}$ is a subset of the diagonal entries of $\mathcal B$. 
Notice that  $A_0\sqcup A_1 \sqcup A_2$ partition the set of $2\times 2 $ minors of $\mathcal M$.
We use the sets $A_s$ to distinguish between the $2\times 2$ minors since the diagonal of $\mathcal M$ has different behavior under $\varphi_W$ than the rest of the matrix.

\begin{example}
The underlined and double underlined $2\times 2$ submatrices in (A) are complementary, but are not complementary in (B) because they share a column. The minor obtained from the underlined submatrix in (B) is an element in $A_0$ while the other marked minors are elements of $A_1$.
$$(A):\begin{bmatrix}
w_{11}&\underline{w_{12}}&w_{13}&\underline{w_{14}}\\
\underline{\underline{ w_{12}}}&w_{22}&\underline{\underline{w_{23}}}&w_{24}\\
\underline{\underline{w_{13}}}&w_{23}&\underline{\underline{w_{33}}}&w_{34}\\
w_{14}&\underline{w_{24}}&w_{34}&\underline{w_{44}}
\end{bmatrix}
\hspace{1in}
(B):\begin{bmatrix}
w_{11}&\underline{w_{12}}&\underline{w_{13}}&w_{14}\\
\underline{\underline{ w_{12}}}&w_{22}&\underline{\underline{w_{23}}}&w_{24}\\
\underline{\underline{w_{13}}}&w_{23}&\underline{\underline{w_{33}}}&w_{34}\\
w_{14}&\underline{w_{24}}&\underline{w_{34}}&w_{44}
\end{bmatrix}$$
\end{example}

When $d=4$ all $2\times2$ minors will have unique complements as there is only one $4\times 4$ submatrix of $\mathcal M$, namely $\mathcal M$ itself.
When $d\geq 4$, any $ \det(m^{\mathcal M})\in A_0$ will have a unique complement.
This is because $|\{i,j,k,l\}|=4$ so there is only one $4\times 4$ principal submatrix of $\mathcal M$ containing $m^{\mathcal M}$ as a submatrix.
Alternatively, if $d>4$ and $\det(m^{\mathcal M})\in A_1\cup A_2$, then the complement will not be unique since there will be multiple $4\times 4$ principal submatrices of $\mathcal M$ containing $m^{\mathcal M}$ as a submatrix.
In particular, there is a different complement associated to each principal submatrix of $\mathcal M$ containing $m^{\mathcal M}$.

\section{construction of $\Lambda$, the candidate for $I(X)$}\label{candidate}

We now begin to construct $\Lambda$, the candidate for the defining ideal of $\F(I)$.
We will construct ideals $\Lambda_i$ from the sets $A_i$ and show they are contained in $I(X)$ using  \Cref{c}. 
The candidate $\Lambda$ will be the sum of the ideals $\Lambda_i$. 
We begin by constructing an ideal $\Lambda_0$ which will be generated by minors in $A_0$.
Since minors in $A_0$ do not involve the diagonal, there will be no obstructions to containment in $I(X)$.

\begin{definition}\label{def:L0}
Let $\Lambda_0$ be the $W$-ideal \[\Lambda_0=<[ij|kl]^{\mathcal M} \text{ s.t. } [ij|kl]^{\mathcal M} \in A_0 >.\]
\end{definition}

\begin{lemma}\label{l0}
Adopt  \Cref{familydata} and  \Cref{def:L0}. 
The ideal $\Lambda_0$ is a subideal of $I(X)$, the defining ideal of $\mathcal{F}(I)$.
\end{lemma}

\begin{proof}
Let $[ij|kl]^{\mathcal M}\in A_0$.
By \Cref{c}, we must show that $\epsilon([ij|kl]^{\mathcal M}) \in I_2(\mathcal N)$.
Since none of the variables in $[ij|kl]^{\mathcal M}$ come from the diagonal of $\mathcal M$, the map $\epsilon$ simply will change the $w$'s to $u$'s. 
In particular, $\epsilon([ij|kl]^{\mathcal M})= \epsilon(\det([ij;kl]^{\mathcal M}))=\det( \epsilon([ij;kl]^{\mathcal M}))=\det([ij;kl]^{\mathcal N})=[ij|kl]^{\mathcal N}$.
Since $[ij|kl]^{\mathcal N}\in  I_2(\mathcal N)$, it follows by  \Cref{c} that $[ij|kl]^{\mathcal M}\in I(X)$. In particular, $\Lambda_0\subset I(X)$.\end{proof}

Elements of the candidate $\Lambda$ which involve elements of $A_1, A_2$, will require linear combinations of complementary minors. 
Indeed, we observe that if $m\in A_s$, then its complement is in $A_s$.
This shows that the interactions of minors and the diagonal of $\mathcal M$ is stable with respect to complements.

\begin{lemma}
If $m_1$ and $m_2$ are complementary submatrices, then $\det(m_1)\in A_s$ if and only if $\det(m_2)\in A_s$.
\end{lemma}

\begin{proof}
Let $m_1$ and $m_2$ be $2\times 2$ complementary submatrices.
Let $[ijkl;]$ be the $4\times 4 $ principal submatrix of $\mathcal M$ containing $m_1,m_2$ as submatrices.
Let $R_1,R_2, C_1, C_2$ be the sets of row and column indices for $m_1, m_2$, respectively. 
Since $m_1, m_2$ are complementary, it follows that $\{i,j,k,l\}=R_1\cup R_2=C_1\cup C_2$ and $\emptyset=R_1\cap R_2= C_1\cap C_2$. 

Let $s=|R_1\cup C_1|$ and let $t=|R_2\cup C_2|$. 
Then we have $4-s=|R_1 \cap C_1|$ and $4-t=|R_2\cap C_2|$ by  \Cref{def:matrixNotation}.
Further, $\det(m_1)\in A_{4-s}$ and $\det(m_2)\in A_{4-t}$.
It remains to show that $s=t$.
If  $\{i,j,k,l\} \setminus (R_1 \cup C_1)=R_2\cap C_2$, then $4-s=4-t$ and we're done.

Suppose $a\in R_2\cap C_2$.
Since $R_1\cap R_2=\emptyset$, it follows that $a\not \in R_1$.
Similarly, $a\not \in C_1$.
Therefore $a\not \in R_1\cup C_1$ and  $a\in \{i,j,k,l\} \setminus (R_1 \cup C_1)$.
On the other hand, suppose $a \in \{i,j,k,l\} \setminus (R_1 \cup C_1)$.
Since $a\in \{i,j,k,l\}$, it follows that $a\in (R_1\cup C_1\cup R_2)$. 
But since $a\not\in (R_1 \cup C_1)$, it follows that $a\in R_2$.
Similarly $a\in C_2$ and $a\in R_2\cap C_2$, completing the proof.
\end{proof}

The next ideal $\Lambda_1$ will be constructed from elements of $A_1$. 
In particular, it will be generated by the forms $\det(m_1^{\mathcal M}) \pm \det(m_2^{\mathcal M})$, where $\det(m_1^{\mathcal M}), \det(m_2^{\mathcal M})$ are complementary minors in the set $A_1$.
Notice that the signs of $\det(m_1^{\mathcal M}),\det(m_2^{\mathcal M})$ always agree, so we focus on the locations of the diagonal entries in the $2\times 2$ submatrices to determine whether to add or subtract the complementary minors. 
In particular, after applying the map $\epsilon$, each minor $\det(m_i^{\mathcal M})\in A_1$ will consist of a $2\times 2$ minor in $\mathcal N$ and one additional term. 
This term will be a multiple of the variable $u_{dd}$, whose sign will change based on if it appears in the diagonal or antidiagonal of the corresponding submatrix. 
Balancing two complementary minors will allow this term to cancel and apply  \Cref{c}.

\begin{definition}\label{def:L1}
Let $\det(m^{\mathcal M})\in A_1$ and $m^{\mathcal M}$ its corresponding $2\times 2$ submatrix. 
If $w_{ii}$ be the unique entry from the diagonal of $\mathcal M$ which appears in $m^{\mathcal M}$, we say $i$ the \emph{repeated index} of $\det(m^{\mathcal M})$.
Let $\delta: A_1\rightarrow \{0,1\}$ be the map such that $\delta(\det(m^{\mathcal M}))=\begin{cases} 0 & w_{ii} \text{ is in the diagonal of } m^{\mathcal M}\\ 1 & w_{ii} \text{ is in the antidiagonal of } m^{\mathcal M}\end{cases}$. 

Let $\Lambda_1$ be the $W$-ideal \[\Lambda_1= <\det(m^{\mathcal M})-(-1)^{\delta(m^{\mathcal M})+\delta(n^{\mathcal M})}\det(n^{\mathcal M})\hspace{.05in} \text{ s.t. }  \det(m^{\mathcal M}),\det(n^{\mathcal M})\in A_1 \text{ are complementary }>.\] 
\end{definition}

\begin{lemma}\label{l1}
Adopt  \Cref{familydata} and  \Cref{def:L1}.
The ideal $\Lambda_1$  is a subideal of $I(X)$, the defining ideal of $\mathcal{F}(I)$.
\end{lemma}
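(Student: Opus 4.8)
The plan is to invoke \Cref{c}: since $\Lambda_1$ is generated by the elements $\det(m^{\mathcal M})-(-1)^{\delta(m^{\mathcal M})+\delta(n^{\mathcal M})}\det(n^{\mathcal M})$ over complementary pairs $\det(m^{\mathcal M}),\det(n^{\mathcal M})\in A_1$, it suffices to show that $\epsilon$ carries each such generator into $I_2(\mathcal N)$. First I would compute the image under $\epsilon$ of a single minor $\det(m^{\mathcal M})\in A_1$. Writing $m^{\mathcal M}$ with repeated index $i$, its rows are $\{i,p\}$ and its columns are $\{i,q\}$ for distinct non-repeated indices $p,q$, and exactly one diagonal entry $w_{ii}$ occurs. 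Because $\epsilon$ replaces every off-diagonal $w$ by the corresponding $u$ while sending $w_{ii}\mapsto u_{ii}-u_{dd}$, expanding the $2\times 2$ determinant gives
\[
\epsilon(\det(m^{\mathcal M}))=M+(-1)^{\delta(m^{\mathcal M})+1}\,u_{dd}\,u_{pq},
\]
where $M$ is the genuine $2\times2$ minor of $\mathcal N$ on the same rows and columns, so $M\in I_2(\mathcal N)$, and $u_{pq}$ is the entry of $\mathcal N$ paired with $w_{ii}$ in the determinant expansion.

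The first point to get right is the sign bookkeeping. The factor $w_{ii}$ lies in the main-diagonal product of the $2\times2$ determinant exactly when $\delta(m^{\mathcal M})=0$ and in the antidiagonal product exactly when $\delta(m^{\mathcal M})=1$; after distributing the $-u_{dd}$ from $\epsilon(w_{ii})$, the resulting error term therefore carries the sign $(-1)^{\delta(m^{\mathcal M})+1}$. Moreover, since $\mathcal N$ is symmetric, the partner entry is unambiguously $u_{pq}$, the variable indexed by the two non-repeated indices of $m^{\mathcal M}$.

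The crux is a combinatorial identity forced by complementarity. If $\det(m^{\mathcal M})$ has rows $\{i,p\}$ and columns $\{i,q\}$ inside the $4\times4$ principal submatrix $[ijkl;]$, then its complement $\det(n^{\mathcal M})$ has rows and columns equal to the set-theoretic complements within $\{i,j,k,l\}$. A short computation identifies the repeated index of $n^{\mathcal M}$ as the remaining fourth index $r$ and, crucially, shows that the two non-repeated indices of $n^{\mathcal M}$ are again exactly $p$ and $q$. Hence $\epsilon(\det(m^{\mathcal M}))$ and $\epsilon(\det(n^{\mathcal M}))$ carry the \emph{same} error term up to sign, namely a scalar multiple of $u_{dd}u_{pq}$. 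Substituting both expansions into $\epsilon\bigl(\det(m^{\mathcal M})-(-1)^{\delta(m^{\mathcal M})+\delta(n^{\mathcal M})}\det(n^{\mathcal M})\bigr)$, the coefficient of $u_{dd}u_{pq}$ is $(-1)^{\delta(m^{\mathcal M})+1}-(-1)^{\delta(m^{\mathcal M})+\delta(n^{\mathcal M})}(-1)^{\delta(n^{\mathcal M})+1}=0$, since $2\delta(n^{\mathcal M})$ is even. The image is thus a $\mathbb{K}$-linear combination of two genuine $2\times2$ minors of $\mathcal N$, hence lies in $I_2(\mathcal N)$, and \Cref{c} places each generator in $I(X)$; summing over generators gives $\Lambda_1\subseteq I(X)$.

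The main obstacle I anticipate is notational rather than conceptual: establishing that complementary $A_1$ minors share the same pair $\{p,q\}$ of non-repeated indices, and simultaneously matching the $\delta$ convention against the positions of $w_{ii}$ and of its partner entry, so that the designed sign $(-1)^{\delta(m^{\mathcal M})+\delta(n^{\mathcal M})}$ built into \Cref{def:L1} is precisely the one that annihilates the $u_{dd}$ contributions. Once the partner-entry identity and the sign rule are pinned down, the cancellation is immediate and the lemma follows.
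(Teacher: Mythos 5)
Your proposal is correct and follows essentially the same route as the paper's proof: apply \Cref{c}, compute that $\epsilon$ sends an $A_1$ minor to a genuine minor of $\mathcal N$ plus an error term $(-1)^{\delta+1}u_{dd}u_{pq}$ indexed by the two non-repeated indices, observe that complementary $A_1$ minors in $[ijkl;]$ share the same non-repeated pair $\{p,q\}$, and check that the sign $(-1)^{\delta(m)+\delta(n)}$ in the generator kills the $u_{dd}$ contribution. The paper organizes the sign bookkeeping by cases on the orderings of the indices, but the content and the cancellation identity are identical to yours.
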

\begin{proof}
Let $\det(m^{\mathcal M}),\det(n^{\mathcal M}) \in A_1$ be complementary minors in the $4\times 4$ principal submatrix $[ijkl;]^{\mathcal M}$ where $j$ is the repeated index of $m^{\mathcal M}$ and $l$ is the repeated index of $n ^{\mathcal M}$.
Let $f=\det(m^{\mathcal M})-(-1)^{\delta(m^{\mathcal M})+\delta(n^{\mathcal M})}\det(n^{\mathcal M})$.
By  \Cref{c}, we must show that $\epsilon(f) \in I_2(\mathcal N)$.
First consider the case $i,k<j$.
The image of $m^{\mathcal M}$ under $\epsilon$ will be
\begin{align*}
\epsilon\left(\det\left( \begin{bmatrix} w_{ik}&w_{ij}\\ w_{jk}& w_{jj} \end{bmatrix}\right)\right)&=\det\left(\epsilon\left( \begin{bmatrix} w_{ik}&w_{ij}\\ w_{jk}& w_{jj} \end{bmatrix}\right)\right)=\det\left( \begin{bmatrix} u_{ik}&u_{ij}\\ u_{jk}& u_{jj}-u_{dd} \end{bmatrix}\right)=[ij|kj]^{\mathcal N}-u_{ik}u_{dd}
\end{align*}
Similarly, if $j<i,k$, then $\epsilon(\det(m^{\mathcal M}))=[ji|jk]^{\mathcal N}-u_{ik}u_{dd}=[ij|kj]^{\mathcal N}-u_{ik}u_{dd}$. 
On the other hand, when $i<j<k$ or $k<j<i$, then the repeated index is on the antidiagonal,  and 
$\epsilon(\det(m^{\mathcal M}))=[ij|jk]^{\mathcal N}+u_{ik}u_{dd}$ or $\epsilon(\det(m^{\mathcal M}))=[ji|kj]^{\mathcal N}+u_{ik}u_{dd}$. 
After reordering the rows or columns in these cases, $\epsilon(\det(m^{\mathcal M}))=-[ij|kj]^{\mathcal N}+u_{ik}u_{dd}$.
In all cases, $\epsilon(\det(m^{\mathcal M}))=(-1)^{\delta(m^{\mathcal M})}[ij|kj]^{\mathcal N}-(-1)^{\delta(m^{\mathcal M})}u_{ik}u_{dd}.$
Similarly, $\epsilon(\det(n^{\mathcal M}) ) =(-1)^{\delta(n^{\mathcal M})} [il|kl]^{\mathcal N}-(-1)^{\delta(n^{\mathcal M})}u_{ik}u_{dd}.$
Putting these together, we can compute $\epsilon(f)$ and cancel like terms. 
\begin{align*}
\epsilon(f)=
&\epsilon(\det(m^{\mathcal M})-(-1)^{\delta(m^{\mathcal M})+\delta(n^{\mathcal M})}\det(n^{\mathcal M}))\\
&=\epsilon(\det(m^{\mathcal M}))-(-1)^{\delta(m^{\mathcal M})+\delta(n^{\mathcal M})}\epsilon(\det(n^{\mathcal M}))\\
&=(-1)^{\delta(m)}[ij|kj]^{\mathcal N}-(-1)^{\delta(m)}u_{ik}u_{dd}-(-1)^{\delta(n^{\mathcal M})} [il|kl]^{\mathcal N}+(-1)^{\delta(n^{\mathcal M})}u_{ik}u_{dd}\\
&=(-1)^{\delta(m^{\mathcal M})}[ij|kj]^{\mathcal N}-(-1)^{\delta(n^{\mathcal M})}[il|kl]^{\mathcal N}
\end{align*}
Since $\epsilon(f) \in I_2(\mathcal N)$, by  \Cref{c} we have $f\in I(X)$.
Therefore $\Lambda_1\subset I(X)$, as claimed.
\end{proof}

The final ideal $\Lambda_2$ will be constructed using minors from $A_2$.
Indeed, the obstruction to the sum of complementary minors $\det(m^{\mathcal M})+\det(n^{\mathcal M})$ lying inside $I(X)$ will be related to the trace of $[ijkl;]$, the $4\times 4$ principal submatrix containing $m^{\mathcal M}$ and $n^{\mathcal M}$.
To sidestep this obstruction we will introduce the notion of a pair of complementary minors.
\begin{definition}\label{def:L2}
We say $(m_1^{\mathcal M},n_1^{\mathcal M}),(m_2^{\mathcal M}, n_2^{\mathcal M})$ are a pair of complementary matrices or (PCM) provided all of the following are true:
\begin{enumerate}[(i)]
\item $\det(m_1^{\mathcal M}),\det(n_1^{\mathcal M})\in A_2$ are complementary minors
\item $\det(m_2^{\mathcal M}), \det(n_2^{\mathcal M}) \in A_2$ are complementary minors
\item There is a $4\times 4$ principal submatrix $[ijkl;]$ containing $m_1^{\mathcal M}, n_1^{\mathcal M}, m_2^{\mathcal M}, n_2^{\mathcal M}$ as submatrices.
\end{enumerate}
Let $\Lambda_2$ be the $W$-ideal \[\Lambda_2 =< (\det(m_1^{\mathcal M}) +\det(n_1^{\mathcal M}))-(\det(m_2^{\mathcal M})+\det(n_2^{\mathcal M})) \hspace{.05in} \text{ s.t. } (m_1^{\mathcal M},n_1^{\mathcal M}),(m_2^{\mathcal M},n_2^{\mathcal M}) \text{ are PCM} >.\]
\end{definition}

\begin{lemma}\label{l2}
Adopt  \Cref{familydata} and  \Cref{def:L2}.
The ideal $\Lambda_2 $ is a subideal of $I(X)$, the defining ideal of $\mathcal{F}(I)$.
\end{lemma}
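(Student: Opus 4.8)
The plan is to mimic the proofs of \Cref{l0} and \Cref{l1}: by \Cref{c}, it suffices to show that for each generator $f$ of $\Lambda_2$, the image $\epsilon(f)$ lands in $I_2(\mathcal N)$. So I fix a PCM $(m_1^{\mathcal M},n_1^{\mathcal M}),(m_2^{\mathcal M},n_2^{\mathcal M})$ inside a fixed $4\times 4$ principal submatrix $[ijkl;]^{\mathcal M}$ and set $f=(\det(m_1^{\mathcal M})+\det(n_1^{\mathcal M}))-(\det(m_2^{\mathcal M})+\det(n_2^{\mathcal M}))$. The key computation is to understand $\epsilon$ applied to a single minor in $A_2$. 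A minor in $A_2$ has the form $[ab|ab]^{\mathcal M}=w_{aa}w_{bb}-w_{ab}^2$ (a principal $2\times 2$ minor), so its two diagonal entries $w_{aa},w_{bb}$ both get shifted under $\epsilon$, giving
\begin{align*}
\epsilon([ab|ab]^{\mathcal M})&=(u_{aa}-u_{dd})(u_{bb}-u_{dd})-u_{ab}^2\\
&=[ab|ab]^{\mathcal N}-u_{dd}(u_{aa}+u_{bb})+u_{dd}^2.
\end{align*}
The obstruction term here is $-u_{dd}(u_{aa}+u_{bb})+u_{dd}^2$, which as anticipated depends on the trace entries $u_{aa},u_{bb}$ of the principal submatrix, and is \emph{not} of the right form to lie in $I_2(\mathcal N)$ on its own.

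The mechanism for cancellation is the PCM structure: since $m_1^{\mathcal M},n_1^{\mathcal M}$ are complementary principal $2\times 2$ minors inside $[ijkl;]^{\mathcal M}$, they partition the four diagonal indices $\{i,j,k,l\}$ into two disjoint pairs, and likewise $m_2^{\mathcal M},n_2^{\mathcal M}$ partition $\{i,j,k,l\}$ into two disjoint pairs. Hence I would compute
\[
\epsilon(\det(m_1^{\mathcal M})+\det(n_1^{\mathcal M}))=[m_1|m_1]^{\mathcal N}+[n_1|n_1]^{\mathcal N}-u_{dd}(u_{ii}+u_{jj}+u_{kk}+u_{ll})+2u_{dd}^2,
\]
since the four diagonal indices appearing across $m_1$ and $n_1$ are exactly $i,j,k,l$ each once, and two minors each contribute a $+u_{dd}^2$. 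The analogous identity holds for $(m_2^{\mathcal M},n_2^{\mathcal M})$ with the \emph{same} trace term $-u_{dd}(u_{ii}+u_{jj}+u_{kk}+u_{ll})+2u_{dd}^2$, because both pairs range over the identical diagonal index set $\{i,j,k,l\}$. Subtracting, the obstruction terms cancel exactly and
\[
\epsilon(f)=\bigl([m_1|m_1]^{\mathcal N}+[n_1|n_1]^{\mathcal N}\bigr)-\bigl([m_2|m_2]^{\mathcal N}+[n_2|n_2]^{\mathcal N}\bigr)\in I_2(\mathcal N),
\]
being a $\mathbb K$-linear combination of genuine $2\times 2$ minors of $\mathcal N$. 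By \Cref{c} this gives $f\in I(X)$, and since $f$ was an arbitrary generator, $\Lambda_2\subseteq I(X)$.

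The main obstacle, and the reason the PCM device is introduced rather than just summing a single complementary pair as in $\Lambda_1$, is precisely that the single-minor obstruction $-u_{dd}(u_{aa}+u_{bb})+u_{dd}^2$ carries a quadratic $u_{dd}^2$ term and a linear-in-trace term that cannot be killed by sign balancing alone (unlike the $A_1$ case, where the stray term was a single monomial $\pm u_{ik}u_{dd}$ removable by one complementary partner). The insight to record is that the total obstruction of a complementary pair depends only on the underlying diagonal index set $\{i,j,k,l\}$ and not on how it is split into pairs; this is what makes two different pairings over the same four indices have identical obstructions, so their difference is clean. I would make sure to state carefully that both decompositions $\{m_1,n_1\}$ and $\{m_2,n_2\}$ exhaust the same trace $u_{ii}+u_{jj}+u_{kk}+u_{ll}$ and contribute the same $2u_{dd}^2$, as this symmetric-role of the four indices is the crux of the argument.
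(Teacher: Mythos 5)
Your proposal is correct and follows essentially the same route as the paper: compute $\epsilon$ of a single principal $2\times 2$ minor to identify the obstruction $u_{dd}^2-(u_{aa}+u_{bb})u_{dd}$, then observe that the two complementary pairings of a PCM exhaust the same diagonal index set $\{i,j,k,l\}$, so the obstructions cancel in the difference and \Cref{c} applies. The paper simply writes this out with the explicit choice $m_1=[ij;],\ n_1=[kl;],\ m_2=[ik;],\ n_2=[jl;]$ rather than your (equivalent) trace-symmetry phrasing.
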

\begin{proof}
We first consider a single minor $[ij]^{\mathcal M}\in A_2$ with $i<j$ and compute its image under $\epsilon$. 
Indeed,
\begin{align*}
\epsilon(\det([ij;]^{\mathcal M}) &=\det(\epsilon([ij;]^{\mathcal M}) =\det\begin{bmatrix} u_{ii}-u_{dd} & u_{ij} \\ u_{ij} & u_{jj}-u_{dd}\end{bmatrix} \\
&=(u_{ii}-u_{dd})(u_{jj}-u_{dd})-u_{ij}^2=u_{dd}^2- (u_{ii}+u_{jj}) (u_{dd})+[ij]^{\mathcal N}
\end{align*}
Let $(m_1^{\mathcal M},n_1^{\mathcal M}),(m_1^{\mathcal M}, n_2^{\mathcal M})$ be a pair of complementary matrices in $[ijkl;]^{\mathcal M}$.
Let $f=(\det(m_1^{\mathcal M}) +\det(n_1^{\mathcal M}))-(\det(m_2^{\mathcal M}) +\det(n_2^{\mathcal M}))$.
We will show that $\epsilon(f)\in I_2(\mathcal N)$. 
Without loss of generality, we may assume $\det(m_1^{\mathcal M})=[ij]^{\mathcal M},\det(n_1^{\mathcal M})=[kl]^{\mathcal M},\det(m_2^{\mathcal M})=[ik]^{\mathcal M}, \det(n_2^{\mathcal M})=[jl]^{\mathcal M}$
Now, 
\begin{align*}
\epsilon(f) &= \epsilon((\det(m_1^{\mathcal M}) +\det(n_1^{\mathcal M}))-(\det(m_2^{\mathcal M}) +\det(n_2^{\mathcal M}))\\
&=\epsilon(\det(m_1^{\mathcal M})) +\epsilon(\det(n_1^{\mathcal M}))-(\epsilon(\det(m_2^{\mathcal M}))+\epsilon(\det(n_2^{\mathcal M})))\\
&=u_{dd}^2- (u_{ii}+u_{jj}) (u_{dd})+[ij]^{\mathcal N}+u_{dd}^2- (u_{kk}+u_{ll}) (u_{dd})+[kl]^{\mathcal N}\\
& \hspace{.2in} -( u_{dd}^2- (u_{ii}+u_{kk}) (u_{dd})+[ik]^{\mathcal N}+u_{dd}^2- (u_{jj}+u_{ll}) (u_{dd})+[jl]^{\mathcal N})\\
&=[ij]^{\mathcal N}+[kl]^{\mathcal N}-[ik]^{\mathcal N}-[jl]^{\mathcal N}
\end{align*}
Since $\epsilon(f) \in I_2(\mathcal N)$, by  \Cref{c} we have $f\in I(X)$.
Therefore $\Lambda_2\subset I(X)$, as claimed.

\end{proof}

\begin{definition}\label{def:lambda}
Let $\Lambda=\Lambda_0+\Lambda_1+\Lambda_2$ where $\Lambda_i$ are the ideals defined in  \Cref{def:L0,def:L1,def:L2}, respectively. 
That is,
\begin{align*}
\Lambda_0 &=< \det(m^{\mathcal M}) \hspace{.05in}\text{s.t.} \hspace{.05in} \det(m^{\mathcal M})\in A_0 >\\
\Lambda_1 &= <\det(m^{\mathcal M})-(-1)^{\delta(m)+\delta(n)}\det(n^{\mathcal M})\hspace{.05in} \text{ s.t. }  \det(m^{\mathcal M}),\det(n^{\mathcal M})\in A_1 \text{ are complementary }>\\
\Lambda_2 &=< (\det(m_1^{\mathcal M}) +\det(n_1^{\mathcal M}))-(\det(m_2^{\mathcal M}) +\det(n_2^{\mathcal M})) \hspace{.05in} \text{ s.t. } (m_1^{\mathcal M},n_1^{\mathcal M}),(m_1^{\mathcal M}, n_2^{\mathcal M}) \text{ are PCM} >
\end{align*}
\end{definition}

\begin{theorem}\label{subset}
Adopt  \Cref{familydata} and  \Cref{def:lambda}.
The ideal $\Lambda$ is a subideal of $I(X)$, the defining ideal of $\mathcal{F}(I)$.
\end{theorem}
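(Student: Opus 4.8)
The plan is to deduce this statement directly from the three lemmas already established, since the hard work has been distributed across \Cref{l0}, \Cref{l1}, and \Cref{l2}. By \Cref{def:lambda}, the candidate ideal is the sum $\Lambda = \Lambda_0 + \Lambda_1 + \Lambda_2$, and each summand has been shown to lie inside $I(X)$ by applying \Cref{c} to suitably chosen linear combinations of minors of $\mathcal M$. The only remaining observation is the elementary fact that $I(X)$, being an ideal of $W$, is closed under addition and under multiplication by elements of $W$.

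Concretely, I would argue as follows. Let $g$ be an arbitrary element of $\Lambda$. Since $\Lambda = \Lambda_0 + \Lambda_1 + \Lambda_2$, we may write $g = g_0 + g_1 + g_2$ with $g_i \in \Lambda_i$ for each $i \in \{0,1,2\}$. By \Cref{l0}, \Cref{l1}, and \Cref{l2}, we have $\Lambda_i \subseteq I(X)$, so each $g_i \in I(X)$. Because $I(X)$ is an ideal, it is closed under addition, and therefore $g = g_0 + g_1 + g_2 \in I(X)$. As $g$ was arbitrary, this gives $\Lambda \subseteq I(X)$, which is the desired conclusion.

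There is essentially no obstacle at this stage: the genuine content lies upstream, in verifying via \Cref{c} that the generators of $\Lambda_1$ and $\Lambda_2$ — which are carefully balanced linear combinations of complementary minors designed so that the spurious $u_{dd}$-terms cancel under $\epsilon$ — actually map into $I_2(\mathcal N) = \operatorname{Ker}(\varphi_U)$. Once those containments are in hand, the sum-of-ideals argument closes the statement immediately. I would keep the proof to a single short paragraph citing the three lemmas, rather than re-deriving any of the minor computations, since reproducing them here would be redundant.
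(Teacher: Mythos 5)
Your proposal is correct and follows exactly the paper's own argument, which simply cites \Cref{l0,l1,l2} and the fact that a sum of subideals of $I(X)$ is contained in $I(X)$. You have merely spelled out the elementary sum-of-ideals step in more detail, which is fine.
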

\begin{proof}
This statement follows directly from  \Cref{l0,l1,l2}.
\end{proof}

\section{The Fiber Ideal}\label{sec:fiber}
The goal of this section is to compute the generating degree of $I(X)$, the defining ideal of $\F(I)$, along with the Hilbert function of $I(X)$ up to its generating degree. 
To do this, we use two main pieces of information. 
First, since $I$ and $\m^2$ differ by exactly one generator, we can construct a short exact sequence relating $\F(I)$ to $\F(\m^2)$.
Then, since the defining ideal of $\F(\m^2)$ is well known, we use the long exact sequence of Tor to bound the generating degree of the defining ideal for $\F(I)$.
In particular, we compute $[\Tor_1^W(\mathcal F(I),\mathbb{K})]_j$ to show that $I(X)$ is generated in degrees two and three.

\begin{theorem}\label{generatingdegreeofI(X)}
Adopt  \Cref{familydata}.
The graded module,  $[\Tor_1^W(\F(I),\mathbb{K})]_j=0$ for all $j\geq 4$. That is, the ideal $I(X)$ is generated in degrees 2 and 3.
\end{theorem}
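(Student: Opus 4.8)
The plan is to compare $\F(I)$ with $\F(\m^2)$ through the injection $\epsilon\colon W\hookrightarrow U$, which realizes $U$ as the polynomial extension $\epsilon(W)[u_{dd}]$ and hence as a free, and in particular flat, $W$-module (with graded retraction $\pi\colon U\to W$, $u_{ij}\mapsto w_{ij}$, $u_{dd}\mapsto 0$). Viewing everything as graded $W$-modules along $\epsilon$, the relation $\varphi_W=\varphi_U\circ\epsilon$ with $\epsilon$ injective gives $I(X)=\epsilon^{-1}(I(Y))$, and $\F(I)=\mathbb K[g_{ij}]$ sits as a graded subring of the second Veronese $\F(\m^2)=\mathbb K[x_ix_j]$, which is a domain. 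The first task is to identify the cokernel of this embedding.

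The key structural claim is that
\[0\longrightarrow\F(I)\longrightarrow\F(\m^2)\longrightarrow\mathbb K(-1)\longrightarrow 0\]
is exact; that is, $\F(I)$ and $\F(\m^2)$ agree in every degree $\ge 2$ and differ only by the class of $u_{dd}=x_d^2$ in degree $1$. To prove this I would first check $\F(\m^2)=\F(I)+\F(I)\,u_{dd}$: the square $u_{dd}^2=x_d^4$ already lies in $\F(I)$ via the identity $x_d^4=g_{kk}g_{ll}-g_{kl}^2+g_{kd}^2+g_{ld}^2$, so all higher powers of $u_{dd}$ fall into $\F(I)+\F(I)u_{dd}$. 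Thus $C:=\F(\m^2)/\F(I)$ is cyclic over $\F(I)$, generated in degree $1$ by $\overline{u_{dd}}$, whence $C\cong(\F(I)/J)(-1)$ for $J=\{f\in\F(I):fu_{dd}\in\F(I)\}$. I would then show $J=\F(I)_+$ by verifying that every algebra generator $g_{ij}$ annihilates $\overline{u_{dd}}$: the identities $g_{ij}u_{dd}=g_{id}g_{jd}$ (for $i,j\ne d$), $g_{ii}u_{dd}=g_{id}^2-u_{dd}^2$, and $g_{id}u_{dd}=x_ix_d^3=g_{ik}g_{kd}-g_{id}g_{kk}$ all land in $\F(I)$ (the last using an index $k\notin\{i,d\}$, available since $d\ge 4$). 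A proper ideal containing $\F(I)_+$ equals $\F(I)_+$, so $C\cong\mathbb K(-1)$.

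With this short exact sequence in hand I would finish via Castelnuovo--Mumford regularity over $W$. Since $\F(\m^2)=U/I_2(\mathcal N)$ is Cohen--Macaulay with a linear $U$-resolution, $\reg_U\F(\m^2)=1$. Because $\F(\m^2)/\m_W\F(\m^2)\cong\mathbb K[u_{dd}]/(u_{dd}^2)$ is Artinian, the ideal $\m_W\F(\m^2)$ is $\F(\m^2)_+$-primary, so the local cohomology modules $H^i_{\m_W}(\F(\m^2))$ and $H^i_{\m_U}(\F(\m^2))$ coincide; hence $\reg_W\F(\m^2)=\reg_U\F(\m^2)=1$. Applying the standard estimate $\reg_W\F(I)\le\max\{\reg_W\F(\m^2),\reg_W\mathbb K(-1)+1\}=\max\{1,2\}=2$ to the sequence above yields $[\Tor_1^W(\F(I),\mathbb K)]_j=0$ for $j\ge 4$; since the $g_{ij}$ are linearly independent quadrics, $I(X)$ contains no linear form, so $I(X)$ is minimally generated in degrees $2$ and $3$. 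Equivalently one may chase the long exact sequence of $\Tor^W(-,\mathbb K)$: $[\Tor_2^W(\mathbb K(-1),\mathbb K)]_j$ is concentrated in degree $3$ while $[\Tor_1^W(\F(\m^2),\mathbb K)]_j=0$ for $j\ge 3$, squeezing $[\Tor_1^W(\F(I),\mathbb K)]_j$ to $0$ for $j\ge 4$.

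I expect the main obstacle to be the cokernel computation, namely the a priori surprising fact that $\F(I)$ and $\F(\m^2)$ coincide in all degrees $\ge 2$; this hinges on producing the explicit polynomial identities that carry each generator $g_{ij}$ times $\overline{u_{dd}}$ back into $\F(I)$, and in particular on $x_ix_d^3=g_{ik}g_{kd}-g_{id}g_{kk}$ for the generators $g_{id}$ meeting the last variable. The secondary technical point is the transfer $\reg_W\F(\m^2)=\reg_U\F(\m^2)$ across the finite extension $W\hookrightarrow U$, which I would justify through the agreement of local cohomology noted above; if one prefers to avoid local cohomology, the same bound follows from flat base change $\Tor_\bullet^W(\F(\m^2),\mathbb K)\cong\Tor_\bullet^U(\F(\m^2),U\otimes_W\mathbb K)$, tracking the linear $U$-resolution of $\F(\m^2)$ reduced modulo the regular sequence $\epsilon(\m_W)$.
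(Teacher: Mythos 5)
Your derivation of the short exact sequence $0\to\F(I)\to\F(\m^2)\to\mathbb K(-1)\to 0$ is correct and in fact more careful than the paper, which essentially asserts it; the identities $u_{dd}^2=g_{kk}g_{ll}-g_{kl}^2+g_{kd}^2+g_{ld}^2$, $g_{ij}u_{dd}=g_{id}g_{jd}$, $g_{id}u_{dd}=g_{ik}g_{kd}-g_{id}g_{kk}$ all check out and correctly identify the cokernel. The gap is in the second half. The claim that $\F(\m^2)=U/I_2(\mathcal N)$ has a linear $U$-resolution with $\reg_U\F(\m^2)=1$ is false precisely in the range $d\ge 4$ considered here: $\F(\m^2)$ is the second Veronese subring of $R$, which is Cohen--Macaulay of dimension $d$ with $a$-invariant $-\lceil d/2\rceil$, so $\reg_U\F(\m^2)=d-\lceil d/2\rceil=\lfloor d/2\rfloor$; this equals $1$ only for $d\le 3$, and the resolution of the $2\times2$ minors of a generic symmetric matrix acquires a nonlinear strand starting at $d=4$. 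Your local-cohomology transfer $\reg_W\F(\m^2)=\reg_U\F(\m^2)$ is itself fine, but with the correct value the estimate gives only $\reg_W\F(I)\le\lfloor d/2\rfloor$, hence $[\Tor_1^W(\F(I),\mathbb K)]_j=0$ for $j\ge\lfloor d/2\rfloor+2$; this happens to recover $j\ge 4$ when $d=4,5$ but misses $j=4$ for every $d\ge 6$. The same problem undercuts your alternative Tor chase: the assertion $[\Tor_1^W(\F(\m^2),\mathbb K)]_j=0$ for $j\ge 3$ is exactly the nontrivial point, and it follows neither from the $U$-resolution being linear (it is not) nor formally from flat base change, since $\Tor_1^U(\F(\m^2),U/\epsilon(\m_W)U)$ is the first Koszul homology of a non-regular sequence of linear forms on $\F(\m^2)$ and is not a priori concentrated in low degrees.

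What is true, and what the paper proves, is the weaker but sufficient statement that $\F(\m^2)$, as a $W$-module, is generated in degrees $0$ and $1$ (by $1$ and $u_{dd}$) with relation module generated in degrees at most $3$, whence $[\Tor_1^W(\F(\m^2),\mathbb K)]_j=0$ for $j\ge 4$. This is done by hand: $W[u_{dd}]/(h)\cong W\oplus Wu_{dd}$ with $h=u_{dd}^2-\sum a_sg_s$ the degree-two equation of integrality (your identity for $x_d^4$), and each $2\times2$ minor of $\mathcal N$, rewritten over the basis $\{1,u_{dd}\}$, contributes a degree-two column to the presentation matrix of $\F(\m^2)$ over $W$. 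You already possess every ingredient for this computation inside your cokernel argument; the regularity detour should simply be replaced by it.
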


\begin{proof}
We begin by setting the degrees of the generators of $I$ and $ \m^2$ to be in degree 1.
Since $I^2=\m^4$, we have the short exact sequence 
\[0\rightarrow \mathcal{F}(I)\rightarrow \mathcal{F}(\m^2)\rightarrow \mathbb{K}(-1)\rightarrow 0.\]
Applying the long exact sequence of Tor to this sequence, we have 
\[\cdots\rightarrow \Tor_2^W(\mathbb{K}(-1),\mathbb{K})\rightarrow \Tor_1^W(\mathcal F(I),\mathbb{K})\rightarrow \Tor_1^W(\mathcal F(\m^2),\mathbb{K}) \rightarrow \cdots . \]
As this is a graded sequence, it is enough to show that for $j\geq 4$ we have $[ \Tor_2^W(\mathbb{K}(-1),\mathbb{K})]_j=0$ and $[\Tor_1^W(\mathcal F(\m^2),\mathbb{K})]_j=0$.

Notice that $\Tor_i^W(\mathbb{K}(-1),\mathbb{K})=\Tor_i^W(\mathbb{K},\mathbb{K})(-1)$.
Since $\Tor_i^W(\mathbb{K},\mathbb{K})$ can be computed with the Koszul complex, we see $[\Tor_i^W(\mathbb{K},\mathbb{K})]_j=0$ for all $j\neq i$.
In the case when $i=2$, we have $[\Tor_2^W(\mathbb{K},\mathbb{K})]_{k}=[\Tor_2^W(\mathbb{K},\mathbb{K})(-1)]_{k+1}=[\Tor_2^W(\mathbb{K}(-1),\mathbb{K})]_{k+1}=0$ for all $k\neq 2$.
In particular, $[\Tor_2^W(\mathbb{K}(-1),\mathbb{K})]_{j}=0$ whenever $j\neq 3$ and therefore when $j\geq 4$.

Now consider $[\Tor_1^W(\mathcal F(\m^2),\mathbb{K})]_j$.
Recall that this graded module is nonzero precisely when $\F(\m^2)$ has a minimal generator in degree $j$ over $W$.
Notably, in the ring $U$, the defining ideal of $\F(\m^2)$ is $I(Y)=I_2(\mathcal N)$, the two by two minors of a symmetric matrix of the variables in $U$.
By using the commutative diagram below, we may view $U$ as a polynomial ring over $W$, that is, $U\cong W[u_{dd}]$.
 \[\xymatrix{W  \ar@{->>}[d]^{\varphi_W} \ar@{^{(}->}[r]^\epsilon &U \ar@{->>}[d]^{\varphi_U}\\ \mathcal F(I) \ar@{^{(}->}[r] &\mathcal F(\m^2)}\]
Notice that $\varphi_U(u_{dd}^2)=x_d^4\in \m^4=I^2$ so there are $a_s\in \mathbb{K}$ and $f_s\in I^2$ such that $\varphi_U(u_{dd}^2)=\sum a_sf_s$.
Since $\varphi_W$ is surjective, we can find degree two elements $g_s\in W$ so that $\varphi_W(g_s)=f_s$.
Let $h=u_{dd}^2-\sum a_sg_s$ and $H=<h>$ the ideal generated by $h$ in $W[u_{dd}]$.
Notice that $u_{dd}$ is integral over $W$ with degree two equation of integrality $h$.
So in particular $\F(\m^2)$ is integral over $\F(I)$ and $ \mathcal F(\m^2)=U/I(Y)$ is a $U/H$ module.
Indeed, by the third isomorphism theorem we have 
 $ \mathcal F(\m^2)=\displaystyle\frac{W[u_{dd}]/H}{I(Y)/H}.$
Since $W[u_{dd}]/H=W\oplus Wu_{dd}$ is free over $W$, we have  $ \F(\m^2)=\displaystyle \frac{W\oplus Wu_{dd}}{I(Y)/H}$.

We will study the presentation matrix $\phi$ of the generating set of $2\times 2 $ minors of $\mathcal N$ in the $W$ resolution of $\F(m^2)$. 
 \[\xymatrix{W\oplus Wu_{dd}  \ar@{->>}[r]^{\phi} & \F(\m^2) }.\]

We define a map $\epsilon^*: U\rightarrow W\oplus Wu_{dd}$ and recall the map $\epsilon:W\rightarrow U$
\[\epsilon(u_{ij})=\begin{cases}  u_{ij} & i\neq j \\ u_{ii}-u_{dd} & (i=j)\neq d \\ u_{dd} & i=j=d
   \end{cases} \hspace{1in} \epsilon(w_{ij})=\begin{cases}  u_{ij} & i\neq j \\ u_{ii}-u_{dd} & i=j 
   \end{cases}\]
We consider three cases based on the image of a $2\times 2$ minor of $\mathcal N$ under $\epsilon^{*}$. Let $i,j,k,l$ be distinct.
\begin{enumerate}[(a)]
\item Consider the $2\times 2$ minor $u_{ij}u_{kl}-u_{ik}u_{jl}$ of $\mathcal N$. Over $W\oplus Wu_{dd}$ the generator will be $w_{ij}w_{kl}-w_{ik}w_{jl}$.
In particular, it will contribute the column $\begin{bmatrix} w_{ij}w_{kl}-w_{ik}w_{jl} \\ 0\end{bmatrix}$ to $\phi$.
\item Consider the $2\times 2$ minor $u_{ij}u_{kk}-u_{ik}u_{jk}$ of $\mathcal N$. Over $W\oplus Wu_{dd}$ the generator will be $w_{ij}(w_{kk}+u_{dd})-w_{ik}w_{jk}= (w_{ij}w_{kk}-w_{ik}w_{jk})+w_{ij}u_{dd}$ if $k\neq d$ 
and $w_{ij}(u_{dd})-w_{id}w_{jd}= (w_{ij}-w_{id}w_{jd})+w_{ij}u_{dd}$ when $k=d$. 
In particular, it will contribute the column $\begin{bmatrix} w_{ij}w_{kk}-w_{ik}w_{jk} \\ w_{ij}\end{bmatrix}$ or $\begin{bmatrix} -w_{ik}w_{jk} \\ w_{ij}\end{bmatrix}$ to $\phi$.
\item Consider the $2\times 2$ minor $u_{ii}u_{jj}-u_{ij}^2$ of $\mathcal N$. Over $W\oplus Wu_{dd}$ the generator will be 
\begin{align*}
(w_{ii}+u_{dd})(w_{jj}+u_{dd})-w_{ij}^2&= w_{ii}w_{jj}+w_{ii}u_{dd}+w_{jj}u_{dd}+u_{dd}^2-w_{ij}^2\\
&=w_{ii}w_{jj}+w_{ii}u_{dd}+w_{jj}u_{dd}+\sum a_s g_s-w_{ij}^2\\
&=(w_{ii}w_{jj}+\sum a_s g_s -w_{ij}^2)+(w_{ii}+w_{jj})u_{dd} \end{align*}
when $i,j\neq d$ and $(\sum a_s g_s -w_{ij}^2)+(w_{ii})u_{dd}$ otherwise.

In particular, it will contribute the column $\begin{bmatrix} w_{ii}w_{jj}+\sum a_s g_s-w_{ij}^2 \\ w_{ii}+w_{jj}\end{bmatrix}$ or $\begin{bmatrix} \sum a_s g_s-w_{ij}^2 \\ w_{ii}\end{bmatrix}$ to $\phi$.
\end{enumerate}
Since each column of $\phi$ has degree 2, we have $[\Tor_1^W(\mathcal F(\m^2,\mathbb{K})]_j=0$ for all $j\neq 2$.
In particular, $[\Tor_1^W(\mathcal F(\m^2),\mathbb{K})]_j=0$ for all $j\neq 2$, and in particular, when $j\geq 4$.

Thus $[\Tor_1^W(\mathcal F(I),\mathbb{K})]_j=0$ for all $j\geq 4$ and $I(X)$ is generated in degrees 2 and 3, as claimed.
\end{proof}

\begin{prop}
The Hilbert function of $I(X)$ in degrees 2 and 3 satisfy the following equalities:
\[HF_{I(X)}(2)=\frac{2(d+2)(d+1)(d)(d-3)}{4!}\]
\[HF_{I(X)}(3)=\frac{14 d^6 + 30 d^5 - 40 d^4 - 210 d^3 - 334 d^2 - 180 d}{6!}.\]
\end{prop}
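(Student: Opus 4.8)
The plan is to compute both Hilbert functions as dimensions of graded pieces of the kernel $I(X)$, exploiting the short exact sequence $0\to \mathcal F(I)\to \mathcal F(\m^2)\to \mathbb K(-1)\to 0$ already established in \Cref{generatingdegreeofI(X)}, together with the identification $U\cong W[u_{dd}]$ and the explicit columns of the presentation matrix $\phi$ computed there. Since $I(X)=\ker(\varphi_W)$ and $\varphi_W$ is surjective, in each degree $j$ we have $HF_{I(X)}(j)=HF_W(j)-HF_{\mathcal F(I)}(j)$. The dimensions $HF_W(j)=\dim_{\mathbb K}W_j$ are computed directly: $W$ is a polynomial ring in $\binom{d+1}{2}-1$ variables (the symmetric $w_{ij}$ with $w_{dd}=0$), so $HF_W(j)=\binom{\binom{d+1}{2}-1+j-1}{j}$, a polynomial in $d$ of degree $2j$. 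The task therefore reduces to computing $HF_{\mathcal F(I)}(2)$ and $HF_{\mathcal F(I)}(3)$.

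First I would compute the Hilbert function of $\mathcal F(\m^2)$, which is classically the coordinate ring of the second Veronese embedding of $\mathbb P^{d-1}$; its graded piece in degree $j$ has dimension $\dim_{\mathbb K}R_{2j}=\binom{d+2j-1}{2j}$, the number of degree-$2j$ monomials in $d$ variables, since $\mathcal F(\m^2)$ is the subalgebra of $R$ generated by the quadrics. Then from the short exact sequence, in degrees $j\geq 2$ the map $\mathcal F(\m^2)_j\to \mathbb K(-1)_j$ is zero (the copy of $\mathbb K$ sits in degree $1$), so $HF_{\mathcal F(I)}(j)=HF_{\mathcal F(\m^2)}(j)$ for all $j\geq 2$. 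Thus $HF_{\mathcal F(I)}(2)=\binom{d+3}{4}$ and $HF_{\mathcal F(I)}(3)=\binom{d+5}{6}$, and substituting these into $HF_{I(X)}(j)=HF_W(j)-HF_{\mathcal F(I)}(j)$ yields the two claimed formulas after simplification; I would verify the degree-$2$ case $HF_{I(X)}(2)=\binom{\binom{d+1}{2}}{2}-\binom{d+3}{4}$ reduces to $\tfrac{2(d+2)(d+1)d(d-3)}{4!}$ and analogously for degree $3$.

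The main obstacle is the bookkeeping for $HF_W(3)$: one must count monomials of degree $3$ in the $\binom{d+1}{2}-1$ variables correctly, which gives a degree-$6$ polynomial in $d$, and then subtract $\binom{d+5}{6}$ and collect terms to match the stated numerator $14d^6+30d^5-40d^4-210d^3-334d^2-180d$ over $6!$. I would organize this by expressing everything over a common factorial denominator and simplifying the resulting polynomial identity; the degree-$2$ computation is an easy warm-up and serves as a consistency check. A subtle point to confirm is that the map $\mathcal F(\m^2)_j\to\mathbb K(-1)_j$ is indeed zero for $j\geq 2$ so that $HF_{\mathcal F(I)}(j)=HF_{\mathcal F(\m^2)}(j)$ in the relevant degrees; this follows because $\mathbb K(-1)$ is concentrated in degree $1$, where the sequence instead drops $\mathcal F(I)$ by one dimension, accounting for the single extra generator by which $\m^2$ exceeds $I$.
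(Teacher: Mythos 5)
Your proposal is correct and follows essentially the same route as the paper: both reduce to $HF_{I(X)}(j)=HF_W(j)-HF_{\mathcal F(I)}(j)$, compute $HF_W(j)$ from the count of variables, and identify $HF_{\mathcal F(I)}(j)$ for $j=2,3$ with $\binom{d+3}{4}$ and $\binom{d+5}{6}$ using the fact that $I^j=\m^{2j}$ for $j\geq 2$ (you phrase this via the short exact sequence $0\to\mathcal F(I)\to\mathcal F(\m^2)\to\mathbb K(-1)\to 0$, the paper directly via $\mu(I^j)=\mu(\m^{2j})$, but these are the same observation). The remaining work in both cases is the identical polynomial bookkeeping.
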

\begin{proof}
Since $\mathcal{F}(I)=W/I(X)$, we have $HF_{I(X)}(i)=HF_W(i)-HF_{\mathcal{F}(I)}(i)$ for all $i$.
Recall that the Hilbert function of a polynomial ring in $n$ variables in degree $i$ is ${i+n-1}\choose i$. 
Since $W$ has $n={{d+1}\choose 2}-1 $ variables, we may compute the Hilbert function of $W$ in degrees two and three as follows:
 \[HF_W(2)={{2+\left({{d+1}\choose 2} -1\right) -1}\choose 2}={{{{d+1}\choose 2} }\choose 2}=3{{d+2}\choose4}\]
 \[HF_W(3)={{3+{{d+1}\choose 2} -2}\choose 3}={{{{d+1}\choose 2} +1}\choose 3}={{d+2}\choose 4}\Bigg({{d+1}\choose2}+1\Bigg).\]

Since $I^i=\m^{2i}$ for all $i\geq 2$ and ${\mathcal{F}(\m)}$ is isomorphic to $R$, the polynomial ring in $d$ variables,
we may compute the Hilbert function of $\F(I)$ in degrees two and three as follows:
 \[HF_{\mathcal{F}(I)}(2)=\dim_\mathbb{K} \left( \frac{I^2}{\m I^2} \right)=\mu(I^2)=\mu(\m^4)=HF_{\mathcal{F}(\m)}(4)=HF_{R}(4)={d+3\choose 4}\]
  \[HF_{\mathcal{F}(I)}(3)=\dim_\mathbb{K} \left( \frac{I^3}{\m I^3} \right)=\mu(I^3)=\mu(\m^6)=HF_{\mathcal{F}(\m)}(6)=HF_{R}(6)={d+5\choose 6}.\]

This allows us to conclude the claimed equalities:
\begin{align*}
HF_{I(X)}(2) &=HF_W(2)-HF_{\mathcal{F}(I)}(2)=3{{d+2}\choose4}-{{d+3}\choose 4}=\frac{2(d+2)(d+1)(d)(d-3)}{4!}\\
HF_{I(X)}(3) &= HF_W(3)-HF_{\mathcal{F}(I)}(3)={{d+2}\choose 4}\Bigg({{d+1}\choose2}+1\Bigg)-{{d+5}\choose 6}\\
&=\frac{14 d^6 + 30 d^5 - 40 d^4 - 210 d^3 - 334 d^2 - 180 d}{6!}.\end{align*}

\end{proof}

We will show that $\Lambda$ is the defining ideal for the special fiber ring $\mathcal{F}(I)$.  
We have already shown that $\Lambda\subseteq I(X)$ and that $I(X)$ is generated in degrees two and three.
It remains to be shown is that $I(X)$ and $\Lambda$ have the same Hilbert function degrees two and three. 
As this computation is laborious, we include the statement of the Main Theorem here.

\begin{theorem}[Main Theorem]\label{maintheorem1}
The ideal $\Lambda$ is the defining ideal of $\mathcal F(I)$, that is $\Lambda=I(X)$.
 Further $\J=\L+ \Lambda S$ is the defining ideal of $\R(I)$.
\end{theorem}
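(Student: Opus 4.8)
The plan is to derive both assertions from results already in place, reducing the whole problem to a comparison of Hilbert functions in degrees two and three. For the equality $\Lambda = I(X)$, I would assemble three ingredients. Theorem \ref{subset} gives the containment $\Lambda \subseteq I(X)$, so in each degree $j$ we have an inclusion of $\mathbb{K}$-vector spaces $\Lambda_j \subseteq I(X)_j$. Theorem \ref{generatingdegreeofI(X)} guarantees that $I(X)$ is generated in degrees two and three. The remaining input is numerical: the Proposition above records the exact values of $HF_{I(X)}(2)$ and $HF_{I(X)}(3)$, while the computations in \Cref{hf2} and \Cref{hf3} produce $HF_{\Lambda}(2)$ and $HF_{\Lambda}(3)$. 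Once these are checked to coincide, the inclusion $\Lambda_j \subseteq I(X)_j$ of finite-dimensional spaces of equal dimension forces $\Lambda_j = I(X)_j$ for $j = 2, 3$.

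To upgrade these degreewise equalities to an equality of ideals, note that since $I(X)$ is generated in degrees two and three, all of its generators lie in $I(X)_2 \oplus I(X)_3 = \Lambda_2 \oplus \Lambda_3 \subseteq \Lambda$; hence $I(X) \subseteq \Lambda$. Together with the reverse containment from Theorem \ref{subset}, this gives $\Lambda = I(X)$, establishing the first assertion.

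The Rees algebra statement is then immediate from the fiber type property. As recorded in the introduction, $I$ has a Gorenstein linear resolution, so by the theorem of Eisenbud, Huneke, and Ulrich \cite{EHU} the Rees algebra $\R(I)$ is of fiber type; explicitly, $\J = \L + I(X)S$, where $\L$ is the defining ideal of the symmetric algebra read off from the presentation matrix of $I$ and $I(X)S$ denotes the extension to $S$ of the special fiber ideal. Substituting the equality $I(X) = \Lambda$ just established yields $\J = \L + \Lambda S$.

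The entire difficulty is concentrated in the Hilbert function count for $\Lambda$ carried out in \Cref{hf2} and \Cref{hf3}. Because the generators of $\Lambda$ are signed combinations of $2 \times 2$ minors of $\mathcal M$ tied together by complementary minors and by the PCM relations of \Cref{def:L2}, computing $\dim_{\mathbb{K}} \Lambda_2$ and $\dim_{\mathbb{K}} \Lambda_3$ demands a careful enumeration of these generators and of all linear dependencies among their products with monomials. This bookkeeping is the laborious heart of the argument; by contrast, the structural deductions above and the passage to the Rees algebra are formal.
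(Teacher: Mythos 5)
Your argument is correct and follows the paper's own proof exactly: containment from Theorem \ref{subset}, the generating-degree bound from Theorem \ref{generatingdegreeofI(X)}, the Hilbert function comparisons in degrees two and three from \Cref{HF2} and \Cref{HF3}, and the fiber type property for the Rees algebra statement. Your explicit spelling out of why degreewise equality in the generating degrees forces equality of the ideals is a detail the paper leaves implicit, but it is the same reasoning.
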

\begin{proof}
By  \Cref{subset}, we have $\Lambda \subseteq I(X)$.
It remains to show $I(X)\subseteq \Lambda$.
By  \Cref{generatingdegreeofI(X)}, $I(X)$ is generated in degrees two and three.
Therefore, it is enough show that the Hilbert functions of $I(X)$ and $\Lambda$ are equal in degree 2 and 3.
 \Cref{HF2} will show $HF_\Lambda(2)=HF_{I(X)}(2)$ and  \Cref{HF3} will show $HF_\Lambda(3)=HF_{I(X)}(3)$, completing the proof that $\Lambda=I(X)$.
Hence the defining ideal of $\mathcal{F}(I)$ is $\Lambda$.
Since the Rees algebra is fiber type, the defining ideal of $\R(I)$ is $\J=\L+ \Lambda S$.
\end{proof}


\section{Hilbert Function of Candidate in Degree 2}\label{hf2}
In this section, we compute the Hilbert function of $\Lambda$ in degree two and show that it agrees with the previously computed value of $HF_{I(X)}(2)$.
To compute the Hilbert function of $\Lambda$ in degree 2,  we describe sets $K_i$ of monomials which we show are elements of $\init(\Lambda_i)$.
Since we are working in the polynomial ring $W$ and not any quotient ring, a set of monomials of the same degree will be linearly independent precisely when they are distinct. 
By construction, these sets will be disjoint, so the sum $|K_0|+|K_1|+|K_2|$ will provide a lower bound for $HF_{\Lambda}(2)$.
Since $\Lambda \subseteq I(X)$ implies $HF_{\Lambda}(2)\leq HF_{I(X)}(2)$, we will show $|K_0|+|K_1|+|K_2|=HF_{I(X)}(2)$.
This not only allows us to conclude $HF_{\Lambda}(2)\geq HF_{I(X)}(2)$, but also shows that the $K_i$ sets are exactly the degree two terms of the initial ideal of $\Lambda$.
We will use this observation describe the degree three terms of the initial ideal of $\Lambda$ in the following section. 

In this section, if a leading term is known, it will be \underline{marked}. 
Since $w_{ij}=w_{ji}$, if the relative size of $i,j$ is known we will typically write the variable as $w_{ij}$ with $i\leq j$ as it will be convenient for our choice of monomial order on the $w_{ij}$ variables.
We will now define the monomial order and $K_i$ sets.

\begin{definition}\label{orderOmega}
Let $\omega$ be the graded reverse lexicographic order such that $w_{ij}<_{\omega} w_{kl}$ if either $\max\{i,j\}<\max\{k,l\}$ or $\max\{i,j\}=\max\{k,l\}$ and $\min\{i,j\}<\min\{k,l\}$. 
We write $\init(\Lambda)=\init_\omega (\Lambda)$ to represent the initial ideal with respect to $\omega$ of the ideal $\Lambda$. 

For any finite set of monomials $B \subset W$, we let $\max_{\omega} B$ be the monomial $b\in B$ such that for all $b'\in B\setminus \{b\}$ we have $b'<_\omega b$.
Similarly, for any finite set of monomials $B \subset W$, we let $\min_{\omega} B$ be the monomial $b\in B$ such that for all $b'\in B\setminus \{b\}$ we have $b'>_\omega b$.
The subscript $\omega$ may be omitted if clear from context.
Let 
\begin{align*}
K_0&=\{w_{ik}w_{jl}, w_{il}w_{jk}\hspace{.05in} \text{ s.t. }\hspace{.05in} i<j<k<l\}\\
K_1&=\{ \max_{\omega}\{w_{jl}w_{il},w_{ij}w_{ll}\} \hspace{.05in} \text{ s.t. }\hspace{.05in}i\neq j, \hspace{.05in} l> \min\{\{1, \ldots , d\}\setminus \{i,j\}\}  \}\\
K_2&=\{w_{ij}^2 \hspace{.05in} \text{ s.t. }\hspace{.05in}   i\geq2, j\geq4, i\neq j\}
\end{align*}
be sets of degree two monomials in $W$.
\end{definition}

Since the generators of $\Lambda_i$ are constructed from linear combinations of elements of $A_i$, we observe a pattern which is useful for describing the initial ideals of $\Lambda_i$.

 \begin{remark}\label{indices}
Every monomial term of a generator of $\Lambda_i$ requires exactly $4-i$ indices.
\end{remark}

The next three lemmas will show that the sets $K_i$ are contained in the initial ideals of $\Lambda_i$ with respect to the ordering $\omega$.
 We will also count the size of these sets to obtain a lower bound on the Hilbert function of $\Lambda$ in degree two.

\begin{lemma}\label{lem:K0}
Adopt  \Cref{familydata} and  \Cref{def:lambda,orderOmega}.
The set of degree two monomials \\ $K_0\subseteq \init \Lambda_0$ and $HF_{\Lambda_0}(2)\geq |K_0|=2{d\choose 4}$.
\end{lemma}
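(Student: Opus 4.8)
The statement has two parts: showing the containment $K_0 \subseteq \init \Lambda_0$, and computing $|K_0| = 2\binom{d}{4}$. The plan is to handle these separately, with the containment being straightforward from the structure of the generators and the count being a small combinatorial exercise.

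For the containment, I would take a monomial from $K_0$, say $w_{ik}w_{jl}$ with $i<j<k<l$, and exhibit a generator of $\Lambda_0$ whose $\omega$-leading term is exactly this monomial. By \Cref{def:L0}, the generators of $\Lambda_0$ are the minors $[ij|kl]^{\mathcal M} \in A_0$, which have the form $w_{ik}w_{jl} - w_{il}w_{jk}$ (up to sign). The two monomial terms are $w_{ik}w_{jl}$ and $w_{il}w_{jk}$, each using the four distinct indices $i,j,k,l$ as required by \Cref{indices}. To identify which term is the $\omega$-leading one, I would compare the two monomials under the order $\omega$ of \Cref{orderOmega}, using the rule that $w_{ab} <_\omega w_{cd}$ is governed first by $\max\{a,b\}$ and then by $\min\{a,b\}$. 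The key observation is that for each of the two monomials $w_{ik}w_{jl}$ and $w_{il}w_{jk}$, the variable containing the largest index $l$ differs: in $w_{il}w_{jk}$ it is $w_{il}$ (paired with $i$), while in $w_{ik}w_{jl}$ it is $w_{jl}$ (paired with $j$), and since $i<j$ the monomial $w_{ik}w_{jl}$ dominates. Thus $w_{ik}w_{jl}$ is the leading term of the appropriate minor, placing it in $\init \Lambda_0$; a symmetric argument (choosing a different minor on the same index set, reordering rows or columns to flip the sign and swap which term leads) shows $w_{il}w_{jk} \in \init \Lambda_0$ as well. I would verify that both target monomials in $K_0$ genuinely arise as leading terms, possibly by noting that the three distinct pairings of $\{i,j,k,l\}$ into two minors give access to both monomials.

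For the count, each unordered $4$-element subset $\{i,j,k,l\}$ with $i<j<k<l$ contributes exactly the two monomials $w_{ik}w_{jl}$ and $w_{il}w_{jk}$ to $K_0$. These are distinct monomials, and monomials arising from different $4$-subsets are distinct since a degree-two monomial $w_{ab}w_{cd}$ with all four indices distinct determines its index set. Hence $|K_0| = 2\binom{d}{4}$. Finally, since the monomials in $K_0$ are distinct degree-two monomials lying in $\init \Lambda_0$, they are linearly independent in $W$, so $HF_{\Lambda_0}(2) \geq |K_0|$, as the dimension of $(\init \Lambda_0)_2$ (equivalently $(\Lambda_0)_2$) is at least the number of distinct initial monomials.

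The main obstacle I anticipate is the sign/leading-term bookkeeping: I must be careful that for \emph{both} monomials in each $4$-subset there is an actual generator of $\Lambda_0$ (a genuine $2\times2$ minor from $A_0$, not merely a $W$-linear combination) whose $\omega$-leading term is that monomial, rather than both generators sharing the same leading term and leaving one $K_0$ element unaccounted for. Resolving this requires checking the three minors supported on $\{i,j,k,l\}$ and confirming the order $\omega$ distributes their leading terms so as to cover both $w_{ik}w_{jl}$ and $w_{il}w_{jk}$.
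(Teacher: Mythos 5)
Your counting argument and the overall strategy (exhibit, for each monomial of $K_0$, a minor in $A_0$ with that monomial as its $\omega$-leading term) match the paper's proof, but your leading-term computation is wrong, and wrong in a way your proposed fix cannot repair. The order $\omega$ is graded \emph{reverse} lexicographic, so two monomials of the same degree are compared via their \emph{smallest} variables, not via the variables containing the largest index: the larger monomial is the one that avoids the smaller variable. For $i<j<k<l$ the variable order is $w_{ij}<_\omega w_{ik}<_\omega w_{jk}<_\omega w_{il}<_\omega w_{jl}<_\omega w_{kl}$, so to compare $w_{ik}w_{jl}$ with $w_{il}w_{jk}$ one looks at $w_{ik}$, the smallest variable occurring in either; it divides the first but not the second, hence $w_{il}w_{jk}>_\omega w_{ik}w_{jl}$ --- the opposite of what you claim. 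Equivalently, the $\omega$-leading term of a $2\times 2$ minor is its \emph{antidiagonal} (Criteria \ref{antidiagonal} in the paper), so $\init\bigl([ij|kl]\bigr)=\init\bigl(w_{ik}w_{jl}-w_{il}w_{jk}\bigr)=w_{il}w_{jk}$, not $w_{ik}w_{jl}$. The comparison rule you applied is the one for graded lex.

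The second problem is your mechanism for producing the other monomial: ``reordering rows or columns to flip the sign and swap which term leads'' does nothing of the sort --- a row or column swap multiplies the whole determinant by $-1$ and cannot change which of its monomials is $\omega$-largest. What you need is a genuinely different minor on the same index set. The three minors supported on $\{i,j,k,l\}$ are, up to sign, $[ij|kl]=w_{ik}w_{jl}-w_{il}w_{jk}$, $[ik|jl]=w_{ij}w_{kl}-w_{il}w_{jk}$, and $[il|jk]=w_{ij}w_{kl}-w_{ik}w_{jl}$; their antidiagonals, hence leading terms, are $w_{il}w_{jk}$, $w_{il}w_{jk}$, and $w_{ik}w_{jl}$ respectively. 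So both elements of $K_0$ are indeed covered --- by $[il|jk]$ together with $[ij|kl]$ or $[ik|jl]$, which is exactly the pair $F_1, F_2$ the paper exhibits --- while $w_{ij}w_{kl}$ is never a leading term. The containment $K_0\subseteq\init\Lambda_0$ therefore survives, but only after redoing the comparison correctly; as written, your argument assigns the wrong leading term to the minor you chose and relies on an operation that cannot produce the second monomial. The count $|K_0|=2{d\choose 4}$ and the inequality $HF_{\Lambda_0}(2)\geq|K_0|$ are fine.
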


\begin{proof}
Without loss of generality, we may assume $i<j<k<l$ and consider the principal submatrix $[ijkl;]$.
We will show that for each monomial $\pmb{w}\in K_0$, there is a corresponding $F\in \Lambda_0$ such that $\pmb{w}=\init F$.
Let $F_1=[il|jk]=\underline{ w_{ik}w_{jl}}-w_{ij}w_{kl}$  and $F_2=[ik|jl]=\underline{ w_{il}w_{jk}}-w_{ij}w_{kl}$.
Since $F_1,F_2\in \Lambda_0$ with $w_{ik}w_{jl}=\init(F_1)$ and $w_{il}w_{jk}=\init(F_2)$ it follows that $K_0\subseteq \init \Lambda_0$ and $HF_{\Lambda_0}(2)\geq |K_0|$.

For counting, note that each choice of indices $i,j,k,l$ produces two monomials in $K_0$.
Further, each monomial in $K_0$ involves all 4 indices $i,j,k,l$, there cannot be leading terms in common between distinct index choices. 
Since there are ${d\choose 4}$ ways to choose $i,j,k,l$, we have $2{d\choose 4}$ monomials in  $K_0$ and $HF_{\Lambda_0}(2)\geq |K_0|=2{d\choose 4}$, as claimed.
\end{proof}

\begin{lemma}\label{lem:K1}
Adopt  \Cref{familydata} and  \Cref{def:lambda,orderOmega}.
The set of degree two monomials\\ $K_1\subseteq \init \Lambda_1$ and $HF_{\Lambda_1}(2)\geq |K_1|=(d-3){d\choose 2}$.
\end{lemma}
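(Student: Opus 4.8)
The plan is to follow the template of \Cref{lem:K0}: for every monomial of $K_1$ I would exhibit an explicit generator of $\Lambda_1$ having that monomial as its $\omega$-leading term, thereby proving $K_1\subseteq\init\Lambda_1$, and then count $K_1$ while checking that distinct generators yield distinct leading terms.

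First I would fix an element $\pmb w=\max_\omega\{w_{il}w_{jl},\,w_{ij}w_{ll}\}\in K_1$, recorded by the pair $\{i,j\}$ and an index $l\notin\{i,j\}$. The defining condition $l>\min(\{1,\dots,d\}\setminus\{i,j\})$ is exactly what lets me set $p:=\min(\{1,\dots,d\}\setminus\{i,j\})$ and know that $p\notin\{i,j\}$ and $p<l$, so that $i,j,l,p$ are four distinct indices. Inside the principal submatrix $[ijlp;]^{\mathcal M}$ I would take the complementary $A_1$-minors $\det(m^{\mathcal M})=[il|jl]^{\mathcal M}$ and $\det(n^{\mathcal M})=[jp|ip]^{\mathcal M}$; by \Cref{def:L1} the associated generator of $\Lambda_1$ is $F=\det(m^{\mathcal M})-(-1)^{\delta(m^{\mathcal M})+\delta(n^{\mathcal M})}\det(n^{\mathcal M})$. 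A key simplification is that the precise sign is irrelevant for computing $\init_\omega F$: expanding the two $2\times2$ determinants, $F$ is, up to signs, the combination of the four monomials
\[
A=w_{ij}w_{ll},\qquad B=w_{il}w_{jl},\qquad C=w_{ij}w_{pp},\qquad D=w_{ip}w_{jp},
\]
each occurring with coefficient $\pm1$. Since $i,j,l,p$ are distinct, these four monomials are pairwise distinct, so no cancellation occurs and $\init_\omega F$ is simply the $\omega$-largest of $A,B,C,D$.

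The main step, and the one I expect to require the most care, is the order comparison showing $\init_\omega F=\pmb w$. I would prove the two inequalities $A>_\omega C$ and $B>_\omega D$; together with the fact that $\omega$ is a total order, these force $\max_\omega\{A,B\}>_\omega\max_\omega\{C,D\}$, whence $\init_\omega F=\max_\omega\{A,B\}=\pmb w$. The comparison $A>_\omega C$ is immediate, since $A$ and $C$ differ only in the variable $w_{ll}$ versus $w_{pp}$, and $p<l$ gives $w_{pp}<_\omega w_{ll}$, so the reverse-lexicographic test is decided by $w_{pp}$, which is absent from $A$. For $B>_\omega D$ I would first check, by a short case split on where $p$ and $l$ fall relative to $i$ (respectively $j$), that $p<l$ forces $w_{ip}<_\omega w_{il}$ and $w_{jp}<_\omega w_{jl}$; consequently the $\omega$-smallest among the four distinct variables $w_{il},w_{jl},w_{ip},w_{jp}$ lies in the support of $D$, which settles the comparison in favor of $B$. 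This establishes $K_1\subseteq\init\Lambda_1$, and since $\init_\omega F=\pmb w$ ranges over exactly the monomials of $K_1$ as the triple varies.

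Finally I would count. There are $\binom{d}{2}$ choices of $\{i,j\}$, and for each the admissible $l$ are the elements of $\{1,\dots,d\}\setminus\{i,j\}$ other than its minimum, i.e.\ $d-3$ of them, giving $(d-3)\binom{d}{2}$ triples. To see that these produce $(d-3)\binom{d}{2}$ distinct monomials I would argue that a monomial of shape $w_{ij}w_{ll}$ carries a unique diagonal variable $w_{ll}$ (recovering $l$) and a unique off-diagonal variable $w_{ij}$ (recovering $\{i,j\}$), whereas one of shape $w_{il}w_{jl}$ has no diagonal variable and its repeated index $l$ and pair $\{i,j\}$ are read off the subscripts; in particular the two shapes never coincide and no two triples give the same leading term. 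Thus $K_1$ is a set of $(d-3)\binom{d}{2}$ distinct, hence $\mathbb K$-linearly independent, monomials lying in $(\init_\omega\Lambda_1)_2$, and I would conclude $HF_{\Lambda_1}(2)=\dim_{\mathbb K}(\init_\omega\Lambda_1)_2\geq|K_1|=(d-3)\binom{d}{2}$.
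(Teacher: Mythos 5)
Your proof is correct and follows essentially the same route as the paper's: both exhibit, for each $\pmb{w}\in K_1$, the $\Lambda_1$-generator built from the pair of complementary $A_1$-minors with repeated indices $l$ and $p=\min(\{1,\dots,d\}\setminus\{i,j\})$, observe that the two monomials contributed by the $p$-minor are $\omega$-smaller than the corresponding monomials from the $l$-minor, conclude $\init_\omega F=\max_\omega\{w_{ij}w_{ll},w_{il}w_{jl}\}=\pmb{w}$, and then count. The one point worth adding is the case $l=d$, where $A=w_{ij}w_{dd}=0$ so your comparison $A>_\omega C$ is unavailable and one instead needs $B=w_{id}w_{jd}>_\omega C=w_{ij}w_{pp}$ directly --- immediate since both variables of $B$ have top index $d$ while those of $C$ do not --- which is the corner case the paper dispatches with a single sentence in its counting argument.
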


\begin{proof}
We will show that for each monomial $\pmb{w}\in K_1$, there is a corresponding $F\in \Lambda_1$ such that $\pmb{w}=\init F$.
Recall that the generators of $\Lambda_1$ are of the form $\det(m) \pm \det(n)$.
Since we are concerned with determining the leading monomial, we will not compute the sign for the sake of this proof. 
Without loss of generality, we will assume $m=[ik;jk],n=[il;jl]$ with $i<j$, $k<l$.
Let $F=\det(m)\pm \det(n)=(w_{ij}w_{kk}-w_{jk}w_{ik}) \pm (w_{ij}w_{ll}- w_{jl}w_{il})$. 
Notice $w_{ij}w_{kk}<w_{ij}w_{ll} $ and $w_{ik}w_{jk}<w_{il}w_{jl}$.
In particular, the leading monomial of $F$ will be $w_{ij}w_{ll}$ when $j>l$ and  $w_{il}w_{jl}$ when $j<l$.

The condition that $l> \min\{\{1, \ldots , d\}\setminus \{i,j\}\}$ in the definition of $K_1$ ensures that there is an index $k\neq i,j$ which is smaller than $l$ and can serve as the smaller repeated index.
It now follows that $K_1\subseteq \init \Lambda_1$ and $HF_{\Lambda_1}(2)\geq |K_1|$.

For the counting argument, we fix the two non repeated indices, $i\neq j$ and show that for each choice of  $i,j$ are at least $(d-3)$ distinct elements of $K_1$. 
Let $k'=\min\{\{1, \ldots , d\}\setminus \{i,j\}\}$ and let $l'\in\{\{1, \ldots , d\}\setminus \{i,j,k'\}\}$.
Either $w_{ij}w_{l'l'}$ or $w_{jl'}w_{il'}$ will be an element of $K_1$, but not both.
If $l'=d$ the leading term will be $w_{jd}w_{id}$, so the conclusion is unchanged, even after setting $w_{dd}=0$. 
Since $|\{1, \ldots , d\}\setminus \{i,j,k'\}|=d-3$, it follows that $|K_1|=(d-3){d\choose 2}$ and $HF_{\Lambda_1}(2)\geq |K_1|=(d-3){d\choose 2}$.

\end{proof}

\begin{lemma}\label{lem:K2}
Adopt  \Cref{familydata} and  \Cref{def:lambda,orderOmega}.
The set of degree two monomials\\ $K_2\subseteq \init \Lambda_2$ and $HF_{\Lambda_2}(2)\geq |K_2|=\frac{d(d-3)}{2}$.
\end{lemma}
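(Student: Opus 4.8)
The plan is to exhibit, for each monomial $w_{ij}^2\in K_2$, an explicit generator of $\Lambda_2$ whose $\omega$-initial term is exactly $w_{ij}^2$, and then to count $K_2$. Throughout I take the representative with $i<j$, so the constraints defining $K_2$ read $i=\min\{i,j\}\geq 2$ and $j=\max\{i,j\}\geq 4$. First I would recall from the computation in the proof of \Cref{l2} that every principal $2\times 2$ minor expands as $[pq]^{\mathcal{M}}=w_{pp}w_{qq}-w_{pq}^2$, and that a PCM on a $4\times 4$ principal submatrix $[ijab;]^{\mathcal{M}}$ yields the generator $F=([ij]^{\mathcal{M}}+[ab]^{\mathcal{M}})-([ia]^{\mathcal{M}}+[jb]^{\mathcal{M}})\in\Lambda_2$ (see \Cref{def:L2}). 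The eight monomials occurring in $F$ are the four diagonal products $w_{ii}w_{jj},w_{aa}w_{bb},w_{ii}w_{aa},w_{jj}w_{bb}$ together with the four squares $w_{ij}^2,w_{ab}^2,w_{ia}^2,w_{jb}^2$, and since $w_{ij}^2$ occurs only in $[ij]^{\mathcal{M}}$ its coefficient in $F$ is $-1$, so it genuinely survives.

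The \emph{heart of the argument} is choosing the auxiliary indices $a,b$ so that $w_{ij}^2$ dominates the other seven monomials under $\omega$. I would pick $b$ with $b<i$, which is possible precisely because $i\geq 2$, and then $a$ with $a<j$ and $a\notin\{i,b,j\}$, which is possible precisely because $j\geq 4$ leaves at least $(j-1)-2\geq 1$ admissible values strictly below $j$; these $a,b$ are automatically distinct from each other and from $i,j$. With these choices every competing square $w_{ab}^2,w_{ia}^2,w_{jb}^2$ uses only indices strictly below $j$, hence is $\omega$-smaller than $w_{ij}$, so $w_{ij}^2$ outranks all three. For the diagonal products I would invoke the same graded reverse lexicographic bookkeeping used in \Cref{lem:K0}: comparing $w_{ij}^2$ with any product $w_{xx}w_{yy}$ appearing in $F$, the $\omega$-smallest variable present is one of the diagonal entries, whose exponent is $0$ in $w_{ij}^2$ and positive in the product, so the last nonzero component of the exponent difference is negative and $w_{ij}^2$ wins. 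This establishes $w_{ij}^2=\init F$ and hence $K_2\subseteq\init\Lambda_2$.

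Finally I would count. The distinct monomials $w_{ij}^2$ with $2\leq i<j$ and $j\geq 4$ number $\sum_{i=2}^{d-1}\#\{\,j:i<j\leq d,\ j\geq 4\,\}=\frac{d(d-3)}{2}$, where the cases $i=2,3$ each contribute $d-3$ and $i\geq 4$ contributes $d-i$. Since distinct degree-two monomials are $\mathbb{K}$-linearly independent in the polynomial ring $W$, the inclusion $K_2\subseteq\init\Lambda_2$ gives $HF_{\Lambda_2}(2)\geq|K_2|=\frac{d(d-3)}{2}$. The main obstacle is exactly the monomial-order bookkeeping of the middle paragraph: one must verify simultaneously that $w_{ij}^2$ outranks three other squares and four diagonal products, and the hypotheses $i\geq 2$ and $j\geq 4$ are what guarantee that admissible auxiliary indices $a,b$ making all seven comparisons go the right way actually exist.
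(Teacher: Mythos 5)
Your proposal is correct and follows essentially the same route as the paper: the paper's proof is the special case of yours with $b=1$ and $a=k$ the smallest element of $\{2,3\}\setminus\{i,j\}$, i.e., it uses the PCM generator $([1j]+[ik])-([1k]+[ij])$ whose marked leading term is $w_{ij}^2$, followed by the same count $\frac{d(d-3)}{2}$. One slip in your middle paragraph: $w_{jb}^2$ does \emph{not} use only indices strictly below $j$, so your stated reason fails for that square, but the needed inequality $w_{jb}<_\omega w_{ij}$ still holds by the tie-breaking rule of $\omega$ (equal maximal index $j$, and $\min\{j,b\}=b<i=\min\{i,j\}$), so the conclusion is unaffected; likewise you leave implicit the case $j=d$ (where $w_{jj}=w_{dd}=0$), which the paper addresses explicitly and which is harmless since $w_{jj}$ occurs only in non-leading diagonal terms.
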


\begin{proof}
We will show that for each monomial $\pmb{w}\in K_2$, there is a corresponding $F\in \Lambda_2$ such that $\pmb{w}=\init F$.
Let $\pmb{w}=w_{ij}^2$ with $i\geq 2, j\geq 4$ and $i\neq j$. 
We assume without loss of generality that $i<j$. 
Let $k=\min\{2, \dots, d\}\setminus \{i,j\}$.
Notice that $k$ will be 2 or 3 so it follows that $k<j$, but we could have either $k<i$ or $i<k$.
Consider the polynomial
\begin{align*}
F&=([1j]+[ik])-([1k]+[ij])\\
&=w_{11}w_{jj}-w_{1j}^2+w_{ii}w_{kk}-w_{ik}^2 -w_{11}w_{kk}+w_{1k}^2-w_{ii}w_{jj}+\underline{w_{ij}^2}.\\
\end{align*}
Since $F\in \Lambda_2$ with $\init{F}=w_{ij}^2$, we see $K_2\subseteq \init \Lambda_2$ and $HF_{\Lambda_2}(2)\geq |K_2|$.
If $j=d$ the proof will be unchanged except for replacing the variable $w_{jj}=w_{dd}$ with a zero which will, importantly, not change the leading term of $F$.

For counting, we assume without loss of generality that $i<j$.
If we fix the larger index $j$ there will be $j-2$ distinct possibilities for a valid index $i\geq 2$. 
In particular, there are $2+3+4+\cdots+(d-2)=\frac{d(d-3)}{2}$ choices of $i,j$.
Therefore $|K_2|= \frac{d(d-3)}{2}$ and $HF_{\Lambda_2}(2)\geq |K_2|=\frac{d(d-3)}{2}$.
\end{proof}

Since $\Lambda_i\subset \Lambda$, it follows that $K_i\subset \init \Lambda$. 
Since the ideals $K_i$ are disjoint, it follows that the sum of $|K_i|$ provides a lower bound on the Hilbert function of $\Lambda$ in degree 2. In fact, we will see that the sum of $|K_i|$ is actually equal to $HF_{I(X)}(2)$, an upper bound on the Hilbert function of $\Lambda$ in degree 2.

\begin{theorem}\label{HF2}
The Hilbert functions of $\Lambda$ and $I(X)$ agree in degree two, that is $HF_{\Lambda}(2)=HF_{I(X)}(2)$.
In particular, the degree two terms of the initial ideal of $\Lambda$ is the set $K_0\cup K_1\cup K_2$. 
\end{theorem}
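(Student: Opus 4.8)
The plan is a dimension-counting sandwich on the degree-two graded piece. From \Cref{subset} the containment $\Lambda\subseteq I(X)$ yields immediately the upper bound $HF_\Lambda(2)\le HF_{I(X)}(2)$, so the whole task reduces to producing a matching lower bound and then checking that the two numbers coincide. The device that makes the lower bound accessible is the initial ideal: since passing from an ideal to its initial ideal with respect to $\omega$ preserves Hilbert functions, we have $HF_\Lambda(2)=\dim_\mathbb{K}[\init\Lambda]_2$, and because $\init\Lambda$ is a monomial ideal this dimension is exactly the number of distinct degree-two monomials contained in $\init\Lambda$.

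First I would collect the monomials already produced. The preceding three lemmas give $K_0\subseteq\init\Lambda_0$, $K_1\subseteq\init\Lambda_1$, and $K_2\subseteq\init\Lambda_2$ (\Cref{lem:K0,lem:K1,lem:K2}), and since each $\Lambda_i\subseteq\Lambda$ forces $\init\Lambda_i\subseteq\init\Lambda$, all of $K_0\cup K_1\cup K_2$ lies in $\init\Lambda$. The three families are pairwise disjoint for structural reasons: a monomial in $K_0$ is a product of two off-diagonal variables spanning four distinct indices, one in $K_1$ spans only three indices (or carries a diagonal factor), and one in $K_2$ is the square of a single off-diagonal variable. Since distinct monomials of $W$ are linearly independent, this gives $HF_\Lambda(2)\ge|K_0|+|K_1|+|K_2|$.

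Next I would verify the arithmetic identity $|K_0|+|K_1|+|K_2|=HF_{I(X)}(2)$. Inserting the counts $2\binom{d}{4}$, $(d-3)\binom{d}{2}$, and $\tfrac{d(d-3)}{2}$ from the lemmas and factoring out $\tfrac{d(d-3)}{12}$ reduces the bracketed total to $(d-1)(d-2)+6(d-1)+6=(d+1)(d+2)$, so the sum equals $\tfrac{d(d-3)(d+1)(d+2)}{12}$, which is exactly the value of $HF_{I(X)}(2)$ computed above. Chaining the bounds then closes the loop, $HF_{I(X)}(2)\ge HF_\Lambda(2)\ge|K_0|+|K_1|+|K_2|=HF_{I(X)}(2)$, so all inequalities are equalities and $HF_\Lambda(2)=HF_{I(X)}(2)$.

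For the final assertion, the equality $\dim_\mathbb{K}[\init\Lambda]_2=|K_0\cup K_1\cup K_2|$ means the linearly independent monomials $K_0\cup K_1\cup K_2$ already form a basis of $[\init\Lambda]_2$; since $\init\Lambda$ is a monomial ideal, any additional degree-two monomial in it would enlarge this independent set beyond the dimension, which is impossible, so the degree-two monomials of $\init\Lambda$ are precisely $K_0\cup K_1\cup K_2$. The one step I would single out as genuinely essential rather than bookkeeping is the invariance of the Hilbert function under passage to the initial ideal: it is what converts the easily verified monomial lower bound on $\init\Lambda$ into a lower bound on $HF_\Lambda(2)$ itself. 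Everything else is the binomial simplification and the disjointness check, both routine.
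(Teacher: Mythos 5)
Your proposal is correct and follows essentially the same route as the paper: upper bound from $\Lambda\subseteq I(X)$, lower bound by exhibiting the disjoint monomial sets $K_0,K_1,K_2$ inside $[\init_\omega\Lambda]_2$ via \Cref{lem:K0,lem:K1,lem:K2}, and then the binomial identity $2\binom{d}{4}+(d-3)\binom{d}{2}+\tfrac{d(d-3)}{2}=HF_{I(X)}(2)$ to close the sandwich. The only cosmetic difference is that you invoke the invariance of the Hilbert function under passage to the initial ideal explicitly, where the paper phrases the same count through the sum $HF_{\Lambda_0}(2)+HF_{\Lambda_1}(2)+HF_{\Lambda_2}(2)$ and the disjointness of the $K_i$ guaranteed by \Cref{indices}.
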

\begin{proof}
Since $\Lambda\subseteq I(X)$ we have $HF_{\Lambda}(2)\leq HF_{I(X)}(2)$.
For the converse, notice that by  \Cref{indices} the generators of $\init(\Lambda_i)$ are disjoint.
In particular, the sets $K_i$ are disjoint. 
Therefore $HF_{\Lambda}(2) \geq HF_{\Lambda_0}(2)+HF_{\Lambda_1}( 2) + HF_{\Lambda_2}(2)$. 
By  \Cref{lem:K0,lem:K1,lem:K2} we have $HF_{\Lambda_0}(2)+HF_{\Lambda_1}( 2) + HF_{\Lambda_2}(2)\geq |K_0|+|K_1|+|K_2|= 2{d\choose 4} +\frac{d(d-1)(d-3)}{2}+\frac{d(d-3)}{2}$. 
After algebraic manipulations, we see that $2{d\choose 4} +\frac{d(d-1)(d-3)}{2}+\frac{d(d-3)}{2}=\frac{2(d+2)(d+1)(d)(d-3)}{4!}=HF_{I(X)}(2)$.
Now we have $HF_{\Lambda}(2)\geq HF_{I(X)}(2)$ and the Hilbert functions of $I(X)$ and $\Lambda$ must agree in degree 2.

Since the set $K_0\cup K_1\cup K_2$ is contained in $\init(\Lambda)$ and satisfies $|K_0\cup K_1\cup K_2|=HF_{\Lambda}(2)$, we can conclude that the degree two terms of the initial ideal of $\Lambda$ with respect to the ordering $\omega$ is the set $K_0\cup K_1\cup K_2$.
\end{proof}

\section{Hilbert Function of candidate in Degree 3}\label{hf3}
Now that we have shown the Hilbert functions of $\Lambda$ and $I(X)$ agree in degree 2, we must show they agree in degree 3.
Since $I(X)$ is generated in degrees two and three this will complete the proof of the main theorem showing $\Lambda=I(X)$, the defining ideal of $\F(I)$.
Calculating the Hilbert function of $\Lambda$ in degree 3 is more involved than the degree two computation. 
We must carefully calculate the multiples of degree 2 elements of the initial ideal of $\Lambda$.
Additionally, we must calculate those degree 3 monomials in $\init(\Lambda)$ which are not multiples of degree two elements.
While we will not be calculating a minimal Gr\"obner basis for $\Lambda$, the elements we study were originally discovered through S-polynomial computations. 
Care must be taken at all times to avoid double counting, as we are looking for a lower bound on the Hilbert function of $\Lambda$.
Again, degree three monomials will be linearly independent in this setting as long as they are distinct.

Consider $\init(\Lambda)=\oplus [\init(\Lambda)]_i\subseteq \init(I(X))\subset W$ with $[\init(\Lambda)]_i$ in degree $i$, 
we must compute the elements $[\init(\Lambda)]_2=K_0\cup K_1\cup K_2$ generates inside $[\init(\Lambda)]_3$.
We must be careful not to double count elements. 
For example, the monomial $w_{24}^2w_{34}\in [\init(\Lambda)]_3$ has degree two divisors $w_{24}^2\in K_2$ and $w_{24}w_{34}\in K_1$, so the $K_i$ partition introduced in the previous section is dangerous with respect to double counting. 
We will now introduce a more convenient partition of $[\init(\Lambda)]_2$. 
The sets $S_{ij}$ defined below distinguish elements of $[\init(\Lambda)]_2$ based on their largest variable divisor.

\begin{definition}\label{def:TandS}
Adopt  \Cref{familydata}.
Let $S_{ij}= \{w_{kl} \text{ s.t. } w_{kl}w_{ij} \in [\init(\Lambda)]_2 \text{ with } w_{kl}\leq_{\omega} w_{ij}\}$ and let $|S_{ij}|$ be the size of the set $S_{ij}$.
Let $T=\{w_{ij} \pmb{w} \text{ s.t. } \pmb{w}\in [\init(\Lambda)]_2 \text{ and } w_{ij}\in W\}$.
Let 
\begin{align*}T_{ij}^{kl}&=\{w_{ab} w_{kl} w_{ij} \text{ s.t. }w_{kl}\in S_{ij}  \text{ and either } \\
& \hspace{1in}w_{ab}\leq_\omega w_{kl}, \text{ or } w_{kl}<_\omega w_{ab}\leq_\omega w_{ij} \text{ and } w_{ab}\not\in S_{ij} ,\text{ or } w_{ij},w_{kl}\not \in S_{ab} \}.\end{align*}
Let $\tau$ be an ordering of sets $T_{ij}^{kl}\subset T$ so that $T_{ab}^{cd}>_\tau T_{ij}^{kl}$ provided either $w_{ab}>_\omega w_{ij}$ or $w_{ab}=_\omega w_{ij}$ and $w_{cd}>_\omega w_{kl}$.
Let $T_{ij}^{\max}$ be the set $T_{ij}^{kl}$ such that $w_{kl}= \max_\omega S_{ij}$, that is, $w_{kl} $ is the maximum variable in the set $S_{ij}$ with respect to the monomial ordering $\omega$. Similarly, let $T_{ij}^{\min}$ be the set $T_{ij}^{kl}$ such that $w_{kl}=\min_\omega S_{ij}$, that is, $w_{kl} $ is the minimum variable in the set $S_{ij}$ with respect to the monomial ordering $\omega$.
Let $|T_{ij}^{kl}|$ be the size of the set $T_{ij}^{kl}$.
\end{definition}

Notice that to define the set $T_{ij}^{kl}$, we require $w_{kl}\in S_{ij}$. 
This implies, by definition, that $w_{kl}\leq_\omega w_{ij}$.
In particular, the total ordering $\tau$ is essentially recovering the lexicographic ordering on the degree two monomials in $K_0\cup K_1\cup K_2$.
Further, a degree three monomial $\pmb{w}\in T_{ij}^{kl}$ exactly when $ w_{ij}w_{kl}|\pmb{w}$ and there is no $T_{i'j'}^{k'l'}>_\tau T_{ij}^{kl}$ such that $\pmb{w}\in T_{i'j'}^{k'l'}$.

\begin{example}
To help understand  \Cref{def:TandS} above, here is a table for the case $d=4$. 
\begin{center}
\begin{tabular}{|c|c|c|c|}
\hline
 $\pmb{w} \in [\init(\Lambda)]_2$ & $w_{kl}\in S_{ij}$ & $w_{ab}$ such that  $w_{ab} w_{kl} w_{ij}\in T_{ij}^{kl}$&$|T_{ij}^{kl}|$\\
\hline \hline
$w_{34}^2$ & $w_{34}\in S_{34}$ & $w_{11}, w_{12}, w_{22}, w_{13}, w_{23}, w_{33},w_{14} , w_{24}, w_{34}$&9\\
\hline
$w_{24}w_{34}$ & $w_{24}\in S_{34}$ & $w_{11}, w_{12}, w_{22}, w_{13}, w_{23}, w_{33},w_{14} , w_{24}$&8\\
\hline
$w_{14}w_{34}$ & $w_{14}\in S_{34}$ & $w_{11}, w_{12}, w_{22}, w_{13}, w_{23}, w_{33},w_{14} $&7\\
\hline \hline
$w_{24}^2$ & $w_{24}\in S_{24}$ & $w_{11}, w_{12}, w_{22}, w_{13}, w_{23}, w_{33},w_{14} , w_{24}$&8\\
\hline
$w_{14}w_{24}$ & $w_{14}\in S_{24}$ & $w_{11}, w_{12}, w_{22}, w_{13}, w_{23}, w_{33},w_{14} $&7\\
\hline
$w_{33}w_{24}$ & $w_{33}\in S_{24}$ & $w_{11}, w_{12}, w_{22}, w_{13}, w_{23}, w_{33} $&6\\
\hline
$w_{23}w_{24}$ & $w_{23}\in S_{24}$ & $w_{11}, w_{12}, w_{22}, w_{13}, w_{23} $&5\\
\hline
$w_{13}w_{24}$ & $w_{13}\in S_{24}$ & $w_{11}, w_{12}, w_{22}, w_{13} $&4\\
\hline \hline
$w_{33}w_{14}$ & $w_{33}\in S_{14}$ & $w_{11}, w_{12}, w_{22}, w_{13}, w_{23}, w_{33},w_{14} $&7\\
\hline
$w_{23}w_{14}$ & $w_{23}\in S_{14}$ & $w_{11}, w_{12}, w_{22}, w_{13}, w_{23}, w_{33}$&6\\
\hline
\end{tabular}
\end{center}

In this case, $|S_{34}|=3$, $|S_{24}|=5$ and $|S_{14}|=2$ and all other $|S_{ij}|=0$. 
Also we have $|T_{34}^\max|=9$, $|T^\max_{24}|=8$ and $|T^\max_{14}|=7$ and all other $|T^\max_{ij}|=0$.
In particular, 67 is a lower bound for $|T|$ when $d=4$. 

\end{example}

We will be interested in a lower bound for $|T|$.
Notice that $T_{ij}^{kl}\subset T$, for all $w_{ij}w_{kl}\in [\init(\Lambda)]_2$ with $w_{kl}\leq w_{ij}$.
As these sets $T_{ij}^{kl}$ are disjoint by construction, $|T|\geq  \sum|T_{ij}^{kl}|$. 
Recall that the degree three monomials will be linearly independent provided they are distinct, so we will be bounding $|T|$ below by the number of distinct monomials in the sets $T_{ij}^{kl}$.
Our goal is to describe the numbers $|T_{ij}^\max|$ and $|S_{ij}|$ properly.
We will then show that for any fixed $i,j$, the partial sum $\displaystyle \sum_{w_{kl}\in S_{ij}} T_{ij}^{kl}$ can be expressed as a sum of consecutive integers described in terms of $|T^\max_{ij}|$ and $|S_{ij}|$. 
Namely, 
\begin{align*}
\sum_{w_{kl}\in S_{ij}} T_{ij}^{kl}&=(|T^\max_{ij}|-|S_{ij}|+1)+(|T^\max_{ij}|-|S_{ij}|+2)+\cdots+|T^\max_{ij}|\\
&=\frac{1}{2}(|S_{ij}|)(|T^\max_{ij}|+(|T^\max_{ij}|-|S_{ij}|+1))=\frac{1}{2}(|S_{ij}|)(2|T^\max_{ij}|-|S_{ij}|+1).
\end{align*}

\begin{criteria}\label{antidiagonal}
Since $\omega$ is a graded reverse lexicographic ordering compatible with a symmetric matrix of variables, the leading term of any generator of $\Lambda$ must be an antidiagonal of the matrix $\mathcal{M}$.
In particular, if $w_{kl}\in S_{ij}$ then $w_{ij}w_{kl}$ is an antidiagonal of $\mathcal{M}$.
\end{criteria}

We will begin by showing that certain sets $S_{ij}$ will always be empty.
We expect this to be the case since if  $S_{12},S_{13},S_{23}$ were non-empty, then $w_{12},w_{13},w_{23}$, respectively, would need to be the largest variable divisor for an element in $[\init(\Lambda)]_2$.
However, since these variables are very small with respect to $\omega$, there are few candidates for containment in $S_{12},S_{13},S_{23}$ to rule out.
On the other hand, if $S_{ii}$ were to be non-empty, there are again few candidates for containment since the corresponding monomial in $[\init(\Lambda)]_2$ could only come from $K_1$.

\begin{lemma}\label{S_empty}
Adopt  \Cref{def:TandS}.
The sets $S_{12},S_{13},S_{23},$ and $S_{ii}$ for $i=1, \ldots, d$ are empty.
\end{lemma}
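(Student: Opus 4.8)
The plan is to prove that each listed set $S_{ij}$ is empty by directly examining which degree-two monomials can possibly land in $[\init(\Lambda)]_2$, which by \Cref{HF2} is exactly $K_0\cup K_1\cup K_2$. The key organizing tool is \Cref{antidiagonal}: if $w_{kl}\in S_{ij}$, then $w_{ij}w_{kl}$ must be an antidiagonal of $\mathcal{M}$, so $w_{ij}$ and $w_{kl}$ together use (at most) four indices arranged so that $w_{ij}w_{kl}$ reads off the antidiagonal of a $2\times 2$ submatrix. Combined with the requirement $w_{kl}\le_\omega w_{ij}$ built into the definition of $S_{ij}$, this severely restricts the candidates, and I will rule them out set by set.

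First I would handle $S_{ii}$. A monomial $\pmb{w}\in[\init(\Lambda)]_2$ whose largest variable divisor is a \emph{diagonal} variable $w_{ii}$ cannot come from $K_0$ (those monomials $w_{ik}w_{jl}, w_{il}w_{jk}$ use four distinct indices, no diagonal entry) nor from $K_2$ (those are squares $w_{ij}^2$ with $i\neq j$, off-diagonal). So the only possible source is $K_1$, whose elements have the form $\max_\omega\{w_{jl}w_{il}, w_{ij}w_{ll}\}$. But \Cref{antidiagonal} forces $w_{ii}$ to sit on an antidiagonal, which is impossible for a diagonal variable appearing as the $\omega$-largest factor; tracing through the $K_1$ description shows the diagonal candidate $w_{ij}w_{ll}$ is only ever selected by $\max_\omega$ when its partner $w_{jl}w_{il}$ is strictly smaller, and in that situation the largest divisor is $w_{ll}$ with $l$ on the antidiagonal, not a pure diagonal square. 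I would verify no element of $K_1$ has a diagonal variable as its $\omega$-maximal divisor, giving $S_{ii}=\emptyset$.

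Next, for $S_{12}$, $S_{13}$, $S_{23}$, the point is that $w_{12}, w_{13}, w_{23}$ are all very small in $\omega$ because their maximum index is at most $3$. For $w_{kl}\in S_{ij}$ with $w_{ij}\in\{w_{12},w_{13},w_{23}\}$, the constraint $w_{kl}\le_\omega w_{ij}$ forces $\max\{k,l\}\le 3$, so both variables draw indices only from $\{1,2,3\}$; then $w_{ij}w_{kl}$ uses at most three distinct indices total. By \Cref{indices}, a generator of $\Lambda_0$ (source of $K_0$) needs four indices and a generator of $\Lambda_1$ needs three, so I would check against each $K_i$: no $K_0$ monomial fits (too few indices available), and the $K_1$ and $K_2$ candidates confined to indices $\{1,2,3\}$ fail the antidiagonal test of \Cref{antidiagonal} or fail $w_{ij}$ being the \emph{largest} divisor. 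Enumerating the finitely many monomials in $\{1,2,3\}$ and confirming none is an antidiagonal product $w_{ij}w_{kl}$ with $w_{ij}$ maximal completes these three cases.

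The main obstacle I anticipate is the $S_{ii}$ case rather than the low-index cases: for $S_{12}, S_{13}, S_{23}$ the index count bluntly eliminates almost everything, but for $S_{ii}$ I must argue carefully about the $\max_\omega$ selection inside the definition of $K_1$ to be sure a diagonal variable never emerges as the dominant factor, and I must confirm this uniformly in $d$ rather than just for $d=4$. I would lean on \Cref{antidiagonal} as the decisive structural fact — a diagonal variable cannot be the leading antidiagonal entry — and on the explicit description of $K_1$ from \Cref{lem:K1}, where the leading monomial is shown to be either $w_{ij}w_{ll}$ or $w_{il}w_{jl}$ with the comparison $w_{ij}w_{kk}<w_{ij}w_{ll}$ governing the choice; cross-referencing that computation shows the $\omega$-largest divisor is always an off-diagonal variable $w_{\cdot l}$ with $l$ the large repeated index, never a diagonal square, so $S_{ii}=\emptyset$ as claimed.
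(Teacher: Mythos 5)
Your proposal is correct and follows essentially the same route as the paper: use Criteria \ref{antidiagonal} together with the constraint $w_{kl}\leq_\omega w_{ij}$ to cut the candidates for $S_{12},S_{13},S_{23}$ down to a short finite list drawn from indices $\{1,2,3\}$ and check that none lies in $[\init(\Lambda)]_2$. For $S_{ii}$ the paper argues slightly more directly — an antidiagonal containing $w_{ii}$ has the form $w_{ii}w_{kl}$ with $k<i<l$, so $w_{kl}>_\omega w_{ii}$ automatically — whereas you route through the $\max_\omega$ selection in $K_1$, but the two arguments are equivalent in substance.
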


\begin{proof}
Suppose, for contradiction, $w_{kl}\in S_{12}$.
Then $w_{kl}\leq_\omega w_{12}$ and $w_{12}w_{kl}$ is an antidiagonal of $\mathcal M$ by  \Cref{antidiagonal}.
In particular,  $S_{12}\subseteq \{w_{12}\}$. 
But since $w_{12}^2\not\in [\init(\Lambda)]_2$, it follows that$S_{12}=\emptyset$.

Similarly, suppose for contradiction $w_{kl}\in S_{13}$.
Then $w_{kl}\leq_\omega w_{13}$, and $w_{13}w_{kl}$ is an antidiagonal of $\mathcal M$ by  \Cref{antidiagonal}.
In particular,  $S_{13}\subseteq \{w_{12}, w_{22}, w_{13}\}$.
But since $w_{12}w_{13},w_{22}w_{13},w_{13}^2\not\in [\init(\Lambda]_2$, it follows that $S_{13}=\emptyset$.

Next, suppose for contradiction $w_{kl}\in S_{23}$.
Then $w_{kl}\leq_\omega w_{23}$, and $w_{23}w_{kl}$ is an antidiagonal of $\mathcal M$ by  \Cref{antidiagonal}.
In particular, $S_{23}\subseteq \{w_{13}, w_{23}\}$.
But since $w_{13}w_{23},w_{23}^2\not\in [\init(\Lambda]_2$, it follows that $S_{23}=\emptyset$.

Notice that $w_{11}$ is not a part of an antidiagonal, so $S_{11}=\emptyset$.
Let $i>1$ and suppose for contradiction $w_{kl}\in S_{ii}$.
Then $w_{kl}\leq_\omega w_{ii}$, and $w_{ii}w_{kl}$ is an antidiagonal of $\mathcal M$ by  \Cref{antidiagonal}.
Antidiagonals of the symmetric matrix $\mathcal M$ involving $w_{ii}$ are of the form $w_{ii}w_{kl}$ with $k<i<l$ . 
No such $w_{kl}$ satisfies $w_{kl}\leq_\omega w_{ii}$.
In particular, $S_{ii}=\emptyset$.
\end{proof}

We will now continue to sets $S_{ij}$ that are non-empty and count the size of these sets.
We assume without loss of generality that $i<j$ and study the case $i=1$ separately.

\begin{lemma}\label{S1j}
Adopt  \Cref{def:TandS}. Let $j\geq 4$ and $k\leq l$.
We have $w_{kl}\in S_{1j}$ if and only if $1<k$ and $2<l<j$.
In particular, $|S_{1j}|=\frac{j(j-3)}{2}$.
\end{lemma}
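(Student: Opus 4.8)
The plan is to decide membership in $S_{1j}$ by testing the product $w_{1j}w_{kl}$ against the explicit set $[\init(\Lambda)]_2 = K_0\cup K_1\cup K_2$ supplied by \Cref{HF2}, and then to count the survivors. By \Cref{def:TandS}, $w_{kl}\in S_{1j}$ means $w_{kl}\leq_\omega w_{1j}$ and $w_{1j}w_{kl}\in[\init(\Lambda)]_2$. The guiding observation is that, since $1$ is the least index, $w_{1j}$ has the smallest possible $\min$-index; hence whenever $w_{1j}$ appears as a factor of a degree-two monomial whose two factors share their $\max$-index, $w_{1j}$ is automatically the $\omega$-smaller factor. This single remark eliminates most candidates.

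For the forward direction I would run through the three pieces. A square in $K_2$ would force $w_{kl}=w_{1j}$ and hence $w_{1j}^2\in K_2$, impossible as $K_2$ requires both indices to be at least $2$. For $K_0$, whose monomials use four distinct indices, the requirement that $w_{1j}$ be the $\omega$-\emph{largest} factor forces $j$ to be the maximum of the four indices (any complementary pair with larger $\max$-index would exceed $w_{1j}$); reading off the two crossing pairings of $K_0$ then pins the monomial down to the $w_{il}w_{jk}$-type term for the sorted set $\{1,k,l,j\}$, giving $w_{kl}$ with $1<k<l<j$. For $K_1$ I would split by the leading-term dichotomy recorded in the proof of \Cref{lem:K1}: the antidiagonal type $w_{ac}w_{bc}$ has both factors sharing the $\max$-index $c$, so by the guiding remark $w_{1j}$ is never its $\omega$-larger factor and these contribute only to other sets $S_{bc}$; the diagonal type $w_{ab}w_{cc}$ forces $\{a,b\}=\{1,j\}$ and $w_{kl}=w_{cc}$, and the leading-term condition $\max(a,b)=j>c$ together with the $K_1$-admissibility $c>\min(\{1,\dots,d\}\setminus\{1,j\})=2$ yields exactly $3\leq c\leq j-1$.

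For the converse I would exhibit each admissible $w_{kl}$ as an honest leading term: when $1<k<l<j$ the monomial $w_{1j}w_{kl}$ is precisely the $w_{il}w_{jk}$-type generator of $K_0$ attached to the sorted indices $\{1,k,l,j\}$, and when $k=l=c$ with $3\leq c\leq j-1$ the monomial $w_{1j}w_{cc}$ is the diagonal $K_1$-element with non-repeated indices $\{1,j\}$ and repeated index $c$ (here $c<j\leq d$ guarantees $w_{cc}\neq 0$). Assembling both directions gives the stated equivalence, and the count then follows: the off-diagonal solutions are the $\binom{j-2}{2}$ pairs drawn from $\{2,\dots,j-1\}$, the diagonal solutions number $j-3$, and $\binom{j-2}{2}+(j-3)=\tfrac{j(j-3)}{2}$.

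The main obstacle is the bookkeeping with $\omega$: I must keep straight, for each $K_1$ generator, whether its leading term is $w_{ab}w_{cc}$ or $w_{ac}w_{bc}$ (the split governed by whether $\max(a,b)>c$ or $\max(a,b)<c$), and I must be sure the diagonal variables $w_{cc}$ are correctly admitted into $S_{1j}$ rather than overlooked as in the off-diagonal reasoning. Everything else is a direct comparison against the three families $K_0,K_1,K_2$.
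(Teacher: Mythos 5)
Your proposal is correct and follows essentially the same route as the paper: both decide membership by testing $w_{1j}w_{kl}$ directly against the explicit description $[\init(\Lambda)]_2=K_0\cup K_1\cup K_2$ from \Cref{HF2}, exhibiting the admissible products as elements of $K_0$ (for $1<k<l<j$) or $K_1$ (for $k=l$ with $2<l<j$) and ruling out everything else. The only difference is organizational --- you structure the exclusion step by which family $K_i$ could contain the product, while the paper runs cases on how $(k,l)$ fails the stated inequalities --- and your count $\binom{j-2}{2}+(j-3)=\frac{j(j-3)}{2}$ agrees with the paper's $\sum_{l=3}^{j-1}(l-1)$.
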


\begin{proof}
Suppose $1<k\leq l$ and $2<l<j$. We will show $w_{kl}\in S_{1j}$. 
Notice that $w_{kl}<_{\omega} w_{1j}$ and $w_{1j}w_{kl}$ is an antidiagonal of $\mathcal M$.
That is,  $w_{kl}$ is a candidate for inclusion in $S_{1j}$.
If $k=l>2$, then $w_{kl}w_{1j}=w_{kk}w_{1j}\in K_1$ by  \Cref{lem:K1}.
Otherwise $1<k <l<j$. Then by  \Cref{lem:K0}, it follows that $w_{kl}w_{1j}\in K_0$.
In all cases, $w_{kl}\in S_{1j}$.

We will now show that if $w_{kl}$ does not satisfy $1<k$ and $2<l<j$, then $w_{kl}\not \in S_{1j}$. 
We have several cases to consider.
\begin{enumerate}[(a)]
\item If $l>j$, then $w_{kl}\geq_\omega w_{1j}$ and $w_{kl}\not \in S_{1j}$. 
\item If $l=j$, in order to satisfy $w_{kl}\leq_\omega w_{1j}$, we would need $k=1$. 
However, $w_{1j}^2\not \in [\init(\Lambda]_2$, so $w_{kl}=w_{1j}\not \in S_{1j}$. 
\item If $k=1$ and $l<j$, then $w_{1l}<_\omega w_{1j}$.
However, $w_{1l}w_{1j}\not \in  [\init(\Lambda]_2$, so $w_{kl}=w_{1l}\not \in S_{1j}$. 
\item If $2=k=l<j$, notice $w_{kl}=w_{22}<_\omega w_{1j}$. However $w_{22}w_{1j}\not \in  [\init(\Lambda]_2$.
Therefore $w_{22} \not \in S_{1j}$. 
\end{enumerate}
In particular, any monomials that do not satisfy $1<k$ and $2<l<j$ cannot be elements of $S_{1j}$.

Now we must count the size of the set $S_{1j}$,  taking care not to double count since $w_{kl}=w_{lk}$.
In particular, $\displaystyle |S_{1j}|=\sum_{l=3}^{j-1} (\text{ \# $k$ options for } w_{kl})=\sum_{l=3}^{j-1} (l-1)=2+3+\cdots +(j-2)=\frac{j(j-3)}{2}$.
\end{proof}

\begin{lemma}\label{Sij}
Adopt  \Cref{def:TandS}. Let $1<i<j\leq d$, $j\geq 4$, and $k\leq l$.
We have $w_{kl}\in S_{ij}$ if and only if either $i<l<j$ or $l=j$ and $k\leq i$.
In particular $|S_{ij}|=\frac{(j-i)(i+j-1)}{2}$.
\end{lemma}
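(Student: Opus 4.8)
The plan is to mirror the structure of the proof of \Cref{S1j}, but with the case $i=1$ replaced by a general row index $1<i<j$. First I would invoke \Cref{antidiagonal}: any $w_{kl}\in S_{ij}$ must satisfy $w_{kl}\leq_\omega w_{ij}$ and $w_{ij}w_{kl}$ must be an antidiagonal of $\mathcal M$, so $w_{ij}w_{kl}$ is the leading term of some generator of $\Lambda$. This immediately restricts the candidates. I would then prove both directions of the biconditional.

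For the forward containment (the ``if'' direction), suppose $i<l<j$ or ($l=j$ and $k\leq i$). I would verify that $w_{kl}<_\omega w_{ij}$ using \Cref{orderOmega} (comparing $\max$ then $\min$ of the index pairs), and then identify which $K$-set the product $w_{ij}w_{kl}$ lands in. When $k=l$ (a diagonal square variable) the product should come from $K_1$ via \Cref{lem:K1}; when all four indices are distinct it should come from $K_0$ via \Cref{lem:K0}; and when $l=j$ with $k=i$ (so $w_{kl}=w_{ij}$) the square $w_{ij}^2$ should come from $K_2$ via \Cref{lem:K2}, which is where the condition $k\leq i$ in the boundary case $l=j$ becomes essential. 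I would check the defining inequalities ($i\geq 2$, $j\geq 4$, etc.) are met so that the relevant $K$-set membership actually holds.

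For the reverse direction (the ``only if''), I would enumerate the candidate monomials $w_{kl}$ with $w_{kl}\leq_\omega w_{ij}$ lying on an antidiagonal and rule out those violating the stated condition, exactly as in the case analysis (a)--(d) of \Cref{S1j}. The cases to dispatch are: $l>j$ (too large in $\omega$); $l=j$ with $k>i$, which would force $w_{ij}w_{kl}$ to be an antidiagonal that is not a leading term in $[\init(\Lambda)]_2$; and the small values $l\leq i$, where the product either fails to lie on an antidiagonal or fails to be in $[\init(\Lambda)]_2$. The main obstacle I anticipate is the boundary case $l=j$: unlike the $i=1$ situation in \Cref{S1j}, here both $w_{ij}^2\in K_2$ and certain off-diagonal products $w_{kj}w_{ij}$ can be genuine leading terms, so I must carefully use the constraint $k\leq i$ to separate admissible from inadmissible monomials and avoid overcounting against the already-established description of $[\init(\Lambda)]_2=K_0\cup K_1\cup K_2$.

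Finally I would count. Splitting on the value of $l$: for each $l$ with $i<l<j$ there are $l$ choices of $k$ with $1\leq k\leq l$ (taking $k\leq l$ to avoid the double-counting from $w_{kl}=w_{lk}$), and for $l=j$ there are exactly $i$ choices $k\leq i$. Summing,
\begin{align*}
|S_{ij}| &= \sum_{l=i+1}^{j-1} l \;+\; i \\
&= \frac{(j-1)j}{2}-\frac{i(i+1)}{2}+i
=\frac{(j-i)(i+j-1)}{2},
\end{align*}
which is the claimed formula; I would simplify the quadratic to confirm it matches. This completes the argument.
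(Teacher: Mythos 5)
Your proposal follows the paper's proof almost step for step: restrict the candidates via \Cref{antidiagonal}, sort the admissible products $w_{kl}w_{ij}$ into the sets $K_0,K_1,K_2$, rule out the remaining monomials by the same case analysis as in \Cref{S1j}, and count by summing over $l$; the closed form $\sum_{l=i+1}^{j-1} l + i = \frac{(j-i)(i+j-1)}{2}$ is exactly the paper's computation and is correct.

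There is, however, one hole in your forward-direction trichotomy (``$k=l$ gives $K_1$; four distinct indices give $K_0$; $l=j$, $k=i$ gives $K_2$''): it does not cover the products with exactly three distinct indices in which $w_{kl}$ is an \emph{off-diagonal} variable, namely $w_{il}w_{ij}$ (the case $k=i$, $i<l<j$) and $w_{kj}w_{ij}$ (the case $l=j$, $k<i$). These are elements of $K_1$ whose repeated index is smaller than both non-repeated indices, so the leading term of the relevant generator of $\Lambda_1$ sits on the off-diagonal pair rather than on the diagonal variable; one must check, via the $\max_\omega$ in the definition of $K_1$ (equivalently, the antidiagonal of the sorted $2\times 2$ submatrix), that for instance $w_{il}w_{ij}=\max_\omega\{w_{il}w_{ij},\, w_{lj}w_{ii}\}$ when $i<l<j$. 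These memberships are not optional: without them $S_{ij}$ loses elements (e.g.\ $w_{23}\in S_{24}$ for $d=4$) and the formula $|S_{ij}|=\frac{(j-i)(i+j-1)}{2}$, which your count silently assumes by allowing all $1\leq k\leq l$, would fail. For what it is worth, the paper's own proof is also loose here --- it assigns the case $k=i$, $i<l<j$ to $K_0$, which cannot be literally correct since $K_0$ consists of products with four distinct indices --- so the patch you need is the same one the published argument needs: every antidiagonal product $w_{kl}w_{ij}$ with exactly three distinct indices lies in $K_1$.
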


\begin{proof}
Suppose $i<l<j$, we will show $w_{kl}\in S_{ij}$.
Notice  $w_{kl}<_\omega w_{ij}$ and  $w_{kl}w_{ij}$ is an antidiagonal of $\mathcal M$.
That is,  $w_{kl}$ is a candidate for inclusion in $S_{ij}$.
If $k\neq l$, then $w_{kl}w_{ij}\in K_0$ by  \Cref{lem:K0}.
If $k=l$, then $w_{kl}w_{ij}=w_{kk}w_{ij}\in K_1$ by  \Cref{lem:K1}.
In all cases, $w_{kl}\in S_{ij}$.

Suppose $l=j$ and $k\leq i$, we will show $w_{kl}\in S_{ij}$.
Notice  $w_{kl}<_\omega w_{ij}$ and  $w_{kl}w_{ij}$ is an antidiagonal of $\mathcal M$.
If $k=i$, then $w_{kl}w_{ij}=w_{ij}^2\in K_2$ by  \Cref{lem:K2}.
If $k<i$, then $w_{kl}w_{ij}=w_{kj}w_{ij}\in K_1$ by  \Cref{lem:K1}.
In all cases, $w_{kl}\in S_{ij}$.

We will now show that if $k,l$ do not satisfy $i<l<j$ or $l=j$ and $k\leq i$, then $w_{kl} \not\in S_{ij}$.
If $l>j$ or $l=j$ and $k> i$, then $w_{kl}>_\omega w_{ij}$, and $w_{kl}\not\in S_{ij}$.
Otherwise, $k\leq l\leq i<j$. 
Then $w_{kl}w_{ij}$ cannot form an antidiagonal in $\mathcal M$ and by  \Cref{antidiagonal},  $w_{kl}\not\in S_{ij}$.
In all cases, $w_{kl}\not\in S_{ij}$.

Now we must count the size of the set $S_{ij}$.
Since $w_{kl}=w_{lk}$, we must be careful not to double count. 
In particular, $\displaystyle |S_{ij}|=\sum_{l=i+1}^{j} (\text{ \# $k$ options for } w_{kl})=\left(\sum_{l=i+1}^{j-1} l \right)+ (i)=\frac{(j-i)(i+j-1)}{2}$.

\end{proof}

Now that we have counted $|S_{ij}|$ we must also count $|T^\max_{ij}|$.
Recall $ T^{\max}_{ij}$ is maximal by $\tau$ among all sets of the form $T_{ij}^{*}$. 
The size of this set is denoted $| T^{\max}_{ij}|$.
We will also show that the size of consecutive sets $T^{kl}_{ij}$ with the same subscript differs by exactly one. 
This will provide a lower bound on the the size of $T=\{w_{ab} \pmb{w} \text{ s.t. } \pmb{w}\in [\init(\Lambda)]_2\}$, that is the degree three multiples of degree two elements in the initial ideal of $\Lambda$.

%
%
%
%
\begin{lemma}\label{Tmax}
Adopt  \Cref{def:TandS}. Let $1< i<j$ and $j\geq 4$. 
Then $ T^{\max}_{1j}=T_{1j}^{(j-1)(j-1)}$ and $ T^{\max}_{ij}=T_{ij}^{ij}$.
More explicitly, 
\begin{align*}
 T^{\max}_{1j}&=\{w_{ab}w_{(j-1)(j-1)}w_{1j} \hspace{.05in} \text{s.t. either } \hspace{.05in} w_{ab}\leq_\omega w_{1j} \text{ or } j\leq \min\{a,b\}\}.\\
 T^{\max}_{ij}&=\{w_{ab}w_{ij}^2 \hspace{.05in} \text{s.t. either } \hspace{.05in} w_{ab}\leq_\omega w_{ij} \text{ or } j\leq \min\{a,b\}\}.\\
\end{align*}
\end{lemma}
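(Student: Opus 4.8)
The plan is to prove both equalities in two stages: first pin down the distinguished variable $w_{kl}=\max_\omega S_{ij}$, and then unwind the three alternatives in the definition of $T_{ij}^{kl}$ (\Cref{def:TandS}) for that particular $w_{kl}$ and match the result against the claimed explicit descriptions. To locate the maxima I would use the previous counting lemmas together with the shape of $\omega$ (which compares the larger index first, then the smaller). By \Cref{S1j} every $w_{kl}\in S_{1j}$ has $\max\{k,l\}\leq j-1$, and $w_{(j-1)(j-1)}\in S_{1j}$, so $\max_\omega S_{1j}=w_{(j-1)(j-1)}$ and $T_{1j}^{\max}=T_{1j}^{(j-1)(j-1)}$. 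Likewise \Cref{Sij} shows $w_{ij}$ is the unique element of $S_{ij}$ with top index $j$ and bottom index as large as $i$, whence $\max_\omega S_{ij}=w_{ij}$ and $T_{ij}^{\max}=T_{ij}^{ij}$.

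For the explicit form in the case $1<i<j$ the substitution $w_{kl}=w_{ij}$ collapses the definition: the middle alternative $w_{kl}<_\omega w_{ab}\leq_\omega w_{ij}$ is vacuous, the first alternative reads $w_{ab}\leq_\omega w_{ij}$, and all such $w_{ab}$ are automatically included. The only remaining content is to show that for $w_{ab}>_\omega w_{ij}$ the third alternative $w_{ij}\notin S_{ab}$ is equivalent to $j\leq\min\{a,b\}$. Here \Cref{antidiagonal} does the work: since $w_{ab}>_\omega w_{ij}$ already gives $w_{ij}\leq_\omega w_{ab}$, membership $w_{ij}\in S_{ab}$ reduces to $w_{ij}w_{ab}\in[\init(\Lambda)]_2$, and I would read off exactly when this holds by applying \Cref{S1j,Sij,S_empty} to the subscript $(a,b)$, splitting on whether $w_{ab}$ is diagonal (handled by $S_{aa}=\emptyset$), off-diagonal with smaller index $1$, or off-diagonal with smaller index $>1$. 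A short index computation then yields $w_{ij}\in S_{ab}\iff j>\min\{a,b\}$, which is the desired equivalence in both directions.

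The harder case, and the main obstacle, is $T_{1j}^{\max}$, because the distinguished variable $w_{(j-1)(j-1)}$ is strictly $\omega$-smaller than $w_{1j}$. Now the middle alternative is genuinely nonempty, and some $w_{ab}$ with $w_{(j-1)(j-1)}<_\omega w_{ab}\leq_\omega w_{1j}$ already lie in $S_{1j}$, so they fail the first two alternatives and must be rescued by the third. The crux is the auxiliary fact that for such $w_{ab}\in S_{1j}$ one has both $w_{1j}\notin S_{ab}$ (immediate, since $w_{ab}<_\omega w_{1j}$) and $w_{(j-1)(j-1)}\notin S_{ab}$. The latter I would derive from the clean incidence criterion that a product $w_{pp}w_{ab}$ of a diagonal variable with another variable is an antidiagonal of $\mathcal M$ precisely when $\min\{a,b\}<p<\max\{a,b\}$; since \Cref{S1j} forces $\max\{a,b\}\leq j-1$ for $w_{ab}\in S_{1j}$, the product $w_{(j-1)(j-1)}w_{ab}$ cannot be an antidiagonal, so by \Cref{antidiagonal} it is not in $[\init(\Lambda)]_2$.

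For the reverse inclusion I would show that when $w_{ab}>_\omega w_{1j}$ and $\min\{a,b\}<j$, at least one of $w_{1j},\,w_{(j-1)(j-1)}$ lies in $S_{ab}$, so the third alternative fails and the monomial is excluded; this again follows from \Cref{S1j,Sij} after splitting on $\max\{a,b\}=j$ versus $\max\{a,b\}>j$. The delicate bookkeeping is the boundary sub-case $\min\{a,b\}=1$, where $w_{1j}\notin S_{1,\max\{a,b\}}$ (the hypothesis $1<k$ of \Cref{S1j} fails) and one must instead certify $w_{(j-1)(j-1)}\in S_{1,\max\{a,b\}}$, using $j\geq 4$ to verify $1<j-1$ and $2<j-1<\max\{a,b\}$. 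Everything beyond these diagonal-versus-antidiagonal incidences is routine index arithmetic, so I expect the write-up to reduce, once the antidiagonal criterion is established, to a handful of case checks against \Cref{S1j,Sij,S_empty}.
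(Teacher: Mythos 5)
Your overall strategy is the same as the paper's: identify $\max_\omega S_{1j}=w_{(j-1)(j-1)}$ and $\max_\omega S_{ij}=w_{ij}$ from \Cref{S1j,Sij}, then unwind the three alternatives in \Cref{def:TandS} and reduce everything to deciding, for $w_{ab}$ larger than the distinguished variable, whether the relevant variables lie in $S_{ab}$ via \Cref{S_empty,S1j,Sij}. Your treatment of $T^{\max}_{ij}$ for $1<i<j$ is correct and matches the paper.

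Your analysis of the ``harder case'' $T^{\max}_{1j}$, however, rests on a false premise. There is no variable strictly between $w_{(j-1)(j-1)}$ and $w_{1j}$ in the order $\omega$: any $w_{ab}>_\omega w_{(j-1)(j-1)}$ has $\max\{a,b\}\geq j$, while any $w_{ab}<_\omega w_{1j}$ has $\max\{a,b\}<j$. So the only variable in the range $w_{(j-1)(j-1)}<_\omega w_{ab}\leq_\omega w_{1j}$ is $w_{1j}$ itself, and $w_{1j}\notin S_{1j}$ (since $w_{1j}^2\notin[\init(\Lambda)]_2$). In particular no $w_{ab}$ in that range lies in $S_{1j}$, so nothing needs to be ``rescued by the third alternative'': the first two alternatives already cover every $w_{ab}\leq_\omega w_{1j}$. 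This observation, which the paper states explicitly, is what makes the $1j$ case essentially no harder than the $ij$ case; your ``crux'' argument is quantified over an empty set, hence vacuously true but aimed at the wrong place. The work genuinely required for $T^{\max}_{1j}$ lives entirely in the regime $w_{ab}>_\omega w_{1j}$, where one must prove the full equivalence: $j\leq\min\{a,b\}$ if and only if \emph{both} $w_{1j}\notin S_{ab}$ and $w_{(j-1)(j-1)}\notin S_{ab}$. Your sketch states only the exclusion direction ($\min\{a,b\}<j$ implies one of the two lies in $S_{ab}$); the inclusion direction --- that $j\leq\min\{a,b\}$ forces both variables out of $S_{ab}$, which is what actually places the monomials with $a,b\geq j$ into $T^{\max}_{1j}$ and accounts for most of its size in \Cref{sizeT} --- is left unstated. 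It does follow routinely from your antidiagonal criterion together with \Cref{S_empty,Sij} (splitting on $a=b$ versus $j\leq a<b$), so this is an omission rather than a failure of method, but it must be written out for the proof to be complete.
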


\begin{proof}
By  \Cref{S1j}, we have $w_{kl}\in S_{1j}$ with $k\leq l$ if and only if $1<k$ and $2<l<j$.
In particular, $\max_\omega S_{1j}=w_{(j-1)(j-1)}$ and $ T^{\max}_{1j}=T_{1j}^{(j-1)(j-1)}$.
Similarly, by  \Cref{Sij}, we have $w_{kl}\in S_{ij}$ if and only if either $i<l<j$ or $l=j$ and $k\leq i$.
In particular, $\max_\omega S_{ij}=w_{ij}$ and $ T^{\max}_{ij}=T_{ij}^{ij}$.

Notice that there is no variable $w_{kl}$ such that $w_{(j-1)(j-1)}<_\omega w_{kl}<_\omega w_{1j}$. Now, by  \Cref{def:TandS}, 
\begin{align*}
T_{1j}^{(j-1)(j-1)}&=\{w_{ab} w_{(j-1)(j-1)} w_{1j} \text{ s.t. } \text{either } w_{ab}\leq_\omega w_{(j-1)(j-1)}, \\ &\hspace{1in}\text{ or }  w_{(j-1)(j-1)}<_\omega w_{ab} \leq_\omega w_{1j} \text{ and } w_{ab}\not\in S_{1j} ,\text{ or } w_{1j},w_{(j-1)(j-1)}\not \in S_{ab} \}\\
&=\{w_{ab} w_{(j-1)(j-1)} w_{1j} \text{ s.t. } \text{either } w_{ab}\leq_\omega w_{1j},\text{ or } w_{1j},w_{(j-1)(j-1)}\not \in S_{ab} \}\\
T_{ij}^{ij}&=\{w_{ab} w_{ij}^2 \text{ s.t. } \text{either } w_{ab}\leq_\omega w_{ij}, \text{ or } w_{ij}<_\omega w_{ab}\leq_\omega w_{ij} \text{ and } w_{ab}\not\in S_{ij} ,\text{ or } w_{ij}\not \in S_{ab} \}\\
&=\{w_{ab} w_{ij}^2 \text{ s.t. } \text{either } w_{ab}\leq_\omega w_{ij},\text{ or } w_{ij}\not \in S_{ab} \}\\
\end{align*}

We now show that when $w_{ab}>_\omega w_{1j}$, the condition $j\leq \min\{a,b\}$ is equivalent to $w_{1j},w_{(j-1)(j-1)}$ both not in $ S_{ab}$.
We assume without loss of generality that $a\leq b$.
If $a=b$, then $S_{ab}=S_{bb}=\emptyset$ by  \Cref{S_empty}.
Otherwise $j\leq a<b$, by  \Cref{Sij} we have $w_{(j-1)(j-1)},w_{1j}\not \in S_{ab}$.
In both of these cases, $j\leq a=\min\{a,b\}$ and $\pmb{w}\in T^{\max}_{1j}$.
We will now show that if  $a<j$, then $\pmb{w}\not \in T^{\max}_{1j}$.
Since $w_{ab}>_\omega w_{1j}$, it follows that $a<j\leq b$.
If $a=1$, by  \Cref{S1j} we have $w_{(j-1)(j-1)}\in S_{1b}$.
If $a>1$, by  \Cref{Sij} we have $w_{1j}\in S_{ab}$.
In both of these cases, $ a<j$ and $\pmb{w}\not \in T^{\max}_{1j}$.

We now show that when $w_{ab}>_\omega w_{ij}$, the condition $j\leq \min\{a,b\}$ is equivalent to $w_{ij}\not\in S_{ab}$.
We assume without loss of generality that $a\leq b$.
If $a=b$, then $S_{ab}=S_{bb}=\emptyset$ by  \Cref{S_empty} and so $\pmb{w}\in T^{\max}_{ij}$.
If $j\leq a<b$, by  \Cref{Sij} we have $w_{ij}\not \in S_{ab}$ and so $\pmb{w}\in T^{\max}_{ij}$.
In both of these cases, $j\leq a$ and $\pmb{w}\in T^{\max}_{ij}$.
We will now show that if  $a<j$, then $\pmb{w}\not \in T^{\max}_{ij}$.
Since $w_{ab}>_\omega w_{ij}$, it follows that $a<j\leq b$.
If $a=1$, by  \Cref{S1j} we have $w_{ij}\in S_{1b}$, so $\pmb{w} \not \in  T^{\max}_{ij}. $
If $a>1$, by  \Cref{Sij} we have $w_{ij}\in S_{ab}$,  so  $\pmb{w} \not \in  T^{\max}_{ij}$.
In both of these cases, $ a<j$ and $\pmb{w}\not \in T^{\max}_{ij}$.

\end{proof}

\begin{corollary}\label{sizeT}
Adopt  \Cref{def:TandS}. Let $1\leq i<j$ and $j\geq 4$.
The size of the set $ T^{\max}_{ij}$ is \[|T^\max_{ij}|=\frac{d^2-2dj+3d+2j^2-4j+2i}{2}.\]
\end{corollary}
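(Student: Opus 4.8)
The plan is to extract from \Cref{Tmax} a clean combinatorial description of $T^{\max}_{ij}$ and then count. Indeed, \Cref{Tmax} expresses $T^{\max}_{ij}$ (and, in the case $i=1$, $T^{\max}_{1j}$) as the set of degree three monomials $w_{ab}\cdot \mu$, where $\mu$ is the fixed degree two monomial $w_{ij}^2$ (respectively $w_{(j-1)(j-1)}w_{1j}$) and $w_{ab}$ ranges over the variables of $W$ satisfying the disjunction $w_{ab}\leq_\omega w_{ij}$ \emph{or} $j\leq \min\{a,b\}$. Since multiplication by the fixed nonzero monomial $\mu$ is injective on monomials, distinct admissible variables $w_{ab}$ yield distinct monomials, so $|T^{\max}_{ij}|$ equals the number of variables $w_{ab}$ of $W$ satisfying this disjunction. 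Thus the whole computation reduces to counting such variables, uniformly in $i$ with $1\leq i<j$.

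To carry this out I would set $A=\{w_{ab}:w_{ab}\leq_\omega w_{ij}\}$ and $B=\{w_{ab}:j\leq\min\{a,b\}\}$ and count each using the definition of $\omega$ in \Cref{orderOmega}, which orders variables first by largest index and then by smallest index. For $A$, write $a\leq b$ and split on the value of $b=\max\{a,b\}$: the variables with $b\leq j-1$ are exactly the pairs from $\{1,\dots,j-1\}$, contributing $\binom{j}{2}$, while the variables with $b=j$ and $w_{ab}\leq_\omega w_{ij}$ are those with $a\leq i$, contributing $i$; hence $|A|=\binom{j}{2}+i=\tfrac{1}{2}(j^2-j+2i)$. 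For $B$, the condition $j\leq\min\{a,b\}$ says both indices lie in $\{j,\dots,d\}$, a set of size $d-j+1$, giving $\binom{d-j+2}{2}$ pairs; here one must remember that $w_{dd}$ is not an honest variable of $W$ (it is set to zero in \Cref{familydata}) and that $w_{dd}$ does satisfy $j\leq d$, so it must be removed, yielding $|B|=\binom{d-j+2}{2}-1=\tfrac{1}{2}(d^2-2dj+j^2+3d-3j)$.

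The remaining point is that $A$ and $B$ are disjoint, so that $|T^{\max}_{ij}|=|A|+|B|$. If $w_{ab}\in A\cap B$, then $\max\{a,b\}\leq j\leq\min\{a,b\}$, forcing $a=b=j$; but $w_{jj}$ has largest index $j$ and smallest index $j>i$, so $w_{jj}>_\omega w_{ij}$ and $w_{jj}\notin A$. Hence the intersection is empty, and adding the two counts gives $|T^{\max}_{ij}|=\tfrac{1}{2}(d^2-2dj+3d+2j^2-4j+2i)$, as claimed. This argument is essentially bookkeeping, so there is no serious obstacle; the only points demanding care are the exclusion of the zero variable $w_{dd}$ from $B$ and the verification that the $\omega$-order makes $A$ and $B$ disjoint, which are precisely the two places where an off-by-one slip would otherwise perturb the final polynomial.
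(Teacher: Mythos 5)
Your proposal is correct and follows essentially the same route as the paper: both proofs reduce the count to enumerating the variables $w_{ab}$ admitted by the description in \Cref{Tmax}, the only difference being that you partition by which disjunct ($w_{ab}\leq_\omega w_{ij}$ versus $j\leq\min\{a,b\}$) holds, while the paper sums over the larger index $b$. Your explicit exclusion of $w_{dd}$ from the second set even handles the boundary case $j=d$ a bit more cleanly than the paper's case analysis.
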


\begin{proof}
Assume without loss of generality that $a\leq b$.
Notice that $\displaystyle |T^\max_{ij}|=\sum_{b=1}^d (\text{\# $a$ options for }w_{ab})$.
When $b<j$ then $w_{ab}\leq_\omega w_{ij}$, then to satisfy $a\leq b$ we must have $a\in \{1, 2, \ldots, b\}$, in particular there are $b$ options for $a$. 
For $b=j$, we must have $a\in \{1,2, \ldots i\}$ to satisfy $w_{ab}\leq_\omega w_{ij}$, however, we could also have $a=b=j$ and which would satisfy $j\leq \min\{a,b\}$, in particular there are $i+1$ options for $a$.
For $j<b<d$, we must have $a\in \{j, \ldots b\}$, in particular, there will be $b-j+1$ options for $a$. 
Finally, when $b=d$, we must have $a\in \{j, \ldots, d-1\}$ since there is no $w_{dd}$ variable, in particular there are $d-j$ options for $a$.
That is, 
\begin{align*}
|T^\max_{ij}|&=\left(\sum_{b=1}^{j-1} b \right)+(i+1)+\left( \sum_{b=j+1}^{d-1} b-j+1\right)+(d-j)\\
&=\frac{j(j-1)}{2}+(i+1)+\frac{(d-j-1)(d-j+2))}{2}+(d-j)\\
&=\frac{d^2-2dj+3d+2j^2-4j+2i}{2}
\end{align*}
\end{proof}

\begin{corollary}\label{corij}
Adopt  \Cref{def:TandS}.
Let $1\leq i<j$ and $j\geq 4$.
If  $T_{ij}^{k_2l_2}>_\tau T_{ij}^{k_1l_1}$ are nonempty and consecutive with respect to the total ordering $\tau$, then $|T_{ij}^{k_2l_2}|=|T_{ij}^{k_1l_1}|+1$.
\end{corollary}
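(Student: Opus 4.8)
The plan is to fix the subscript $i<j$ (with $j\ge 4$) and two $\omega$-consecutive elements $s_1<_\omega s_2$ of $S_{ij}$, write $t=w_{ij}$, and compare $|T_{ij}^{k_1l_1}|$ and $|T_{ij}^{k_2l_2}|$ by counting admissible cofactors. Recall from the characterization of $T_{ij}^{kl}$ recorded after \Cref{def:TandS} that a degree three monomial lies in $T_{ij}^{kl}$ exactly when it factors as $w_{ab}w_{ij}w_{kl}$ with $w_{ij}w_{kl}$ the $\tau$-maximal degree two divisor of it lying in $[\init(\Lambda)]_2$; since distinct cofactors give distinct monomials, $|T_{ij}^{kl}|$ equals the number of variables $w_{ab}$ for which $w_{ij}w_{kl}$ is $\tau$-maximal. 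Writing $s=w_{kl}$, I would split these cofactors into three regions according to the $\omega$-position of $w_{ab}$: comparing the competing divisors $w_{ab}t$ and $w_{ab}s$ against $ts$ under $\tau$ shows that $w_{ab}$ is admissible precisely when (i) $w_{ab}\le_\omega s$, always; or (ii) $s<_\omega w_{ab}\le_\omega t$ and $w_{ab}\notin S_{ij}$; or (iii) $w_{ab}>_\omega t$ with both $t\notin S_{ab}$ and $s\notin S_{ab}$ (consistent with the three clauses of \Cref{def:TandS}).

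For regions (i) and (ii), I would show their combined contribution equals $N(t)-\#\{v\in S_{ij}: v>_\omega s\}$, where $N(v)$ denotes the number of variables that are $\le_\omega v$: region (i) contributes the $N(s)$ variables up to $s$, region (ii) contributes the variables in $(s,t]$ not in $S_{ij}$, and since every element of $S_{ij}$ is $\le_\omega t$ the two counts telescope to $N(t)$ minus the number of elements of $S_{ij}$ exceeding $s$. Because $s_1,s_2$ are consecutive in $S_{ij}$, the only element of $S_{ij}$ in $(s_1,s_2]$ is $s_2$, so passing from $s_1$ to $s_2$ decreases $\#\{v\in S_{ij}:v>_\omega s\}$ by exactly one; hence regions (i)--(ii) contribute exactly one more cofactor for $s_2$ than for $s_1$.

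It remains to show that region (iii) contributes the same number of cofactors for every $s\in S_{ij}$, and this is the main obstacle. Equivalently, I must prove: if $w_{ab}>_\omega w_{ij}$ (with $a\le b$) and $w_{ij}\notin S_{ab}$, then the truth value of ``$s\in S_{ab}$'' is the same for every $s\in S_{ij}$; since region (iii) also always demands $t\notin S_{ab}$, its count $\#\{w_{ab}>_\omega t: t,\,s\notin S_{ab}\}$ is then independent of $s$. I would prove this dichotomy by a short case analysis from \Cref{S1j,Sij,S_empty}. The relation $w_{ab}>_\omega w_{ij}$ means $b>j$, or $b=j$ with $a>i$. When $b=j$, or when $a=1<i$, the explicit form of $S_{ab}$ gives $w_{ij}\in S_{ab}$, so the hypothesis fails and such $w_{ab}$ is excluded for every $s$. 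In the remaining case $b>j$ with $w_{ij}\notin S_{ab}$: if $a\ge 2$ then $w_{ij}\notin S_{ab}$ forces $a\ge j$, and since every element of $S_{ij}$ has larger index at most $j\le a$ while $S_{ab}$ only contains pairs whose larger index lies in $(a,b]$, we get $S_{ij}\cap S_{ab}=\emptyset$; if $a=1$ then necessarily $i=1$, and \Cref{S1j} gives $S_{1j}\subseteq S_{1b}$. In every case the truth value is constant in $s$, so region (iii) is $s$-independent.

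Combining the three regions yields $|T_{ij}^{k_2l_2}|=|T_{ij}^{k_1l_1}|+1$, as claimed; this is also exactly what justifies the telescoping sum for $\sum_{w_{kl}\in S_{ij}}|T_{ij}^{kl}|$ stated before \Cref{antidiagonal}. The essential difficulty is isolated entirely in the region (iii) dichotomy above, while regions (i) and (ii) are bookkeeping resting on the consecutiveness of $s_1$ and $s_2$ in $S_{ij}$.
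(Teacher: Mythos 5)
Your argument is correct, and it reaches the conclusion by a genuinely different mechanism than the paper. The paper fixes the two consecutive sets and builds a cofactor-preserving correspondence: it checks (via the reasoning of \Cref{Tmax}) that cofactors with $j\leq\min\{a,b\}$ lie in both sets, shows by contradiction that every small cofactor admissible for $T_{ij}^{k_2l_2}$ other than $w_{k_2l_2}$ itself is admissible for $T_{ij}^{k_1l_1}$, and exhibits $w_{k_2l_2}^2w_{ij}$ as the one unmatched element; you instead compute $|T_{ij}^{kl}|$ directly as a function of $s=w_{kl}$, getting $N(t)-\#\{v\in S_{ij}:v>_\omega s\}$ from the two small-cofactor regions plus an $s$-independent region-(iii) term, and read off the increment of $1$ from consecutiveness. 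Your route is somewhat longer but buys two things: it makes the ``sum of consecutive integers'' structure invoked in \Cref{consecutiveSum} completely transparent, and it avoids having to verify the reverse direction of the paper's correspondence (the paper only proves an injection out of $T_{ij}^{k_2l_2}\setminus\{w_{k_2l_2}^2w_{ij}\}$, leaving surjectivity implicit), since an explicit count needs no bijection at all. One small inaccuracy in your region-(iii) case analysis: when $b=j$ and $a=b=j$, i.e.\ $w_{ab}=w_{jj}$, you claim $w_{ij}\in S_{ab}$, but $S_{jj}=\emptyset$ by \Cref{S_empty}, so $w_{jj}$ is in fact \emph{admitted} for every $s$ rather than excluded; the truth value of ``$s\in S_{jj}$'' is still constant in $s$, so the $s$-independence you need survives, but the justification for that sub-case should be corrected.
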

\begin{proof}
Let $T_{ij}^{k_2l_2}>_\tau T_{ij}^{k_1l_1}$ be nonempty and consecutive with respect to the total ordering $\tau$.
Then $w_{k_1 l_1}<_\omega w_{k_2 l_2}$ are consecutive in $S_{ij}$ with respect to $\omega$.
By the same reasoning as the proof of  \Cref{Tmax}, if $j\leq \min\{a,b\}$, then $w_{ab}w_{k_il_i}w_{ij}\in  T^{k_il_i}_{ij}$. 

We now consider the $w_{ab}<_\omega w_{ij}$ and claim that if $w_{ab}w_{k_2 l_2}w_{ij}\in T_{ij}^{k_2l_2}$ with $w_{ab}\neq w_{k_2l_2}$, there is a corresponding element $w_{ab}w_{k_1 l_1}w_{ij}\in T_{ij}^{k_1l_1}$.
Suppose, for contradiction, that $\pmb{w}=w_{ab}w_{k_1l_1}w_{ij}\not \in T_{ij}^{k_1l_1}$.
Now, $\pmb{w}$ must be in some $T_{*}^{*}$ set since $w_{k_1l_1}w_{ij}\in [\init(\Lambda)]_2$. 
Since $w_{ab}, w_{k_1,l_1}<_\omega w_{ij}$,  and $\pmb{w}\not \in T_{ij}^{k_1l_1}$, then $\pmb{w} \in T_{ij}^{ab}$.
This further implies $w_{ab}\geq_\omega w_{k_2l_2}$.
That is, $w_{ab} =w_{k_2l_2}$ or $w_{ab}w_{k_2 l_2}w_{ij}\in T_{ij}^{ab}$ with $w_{ab}>_\omega w_{k_2l_2}$.
In both cases, we have a contradiction.
In particular, whenever $w_{ab}<_\omega w_{ij}$, if $w_{ab}w_{k_2 l_2}w_{ij}\in T_{ij}^{k_2l_2}$ such that $w_{ab}\neq w_{k_2l_2}$, there is a corresponding element $w_{ab}w_{k_1 l_1}w_{ij}\in T_{ij}^{k_1l_1}$.

On the other hand, $w_{k_2 l_2}^2w_{ij}\in T_{ij}^{k_2l_2}$, but there is no corresponding $w_{k_1l_1}w_{k_2 l_2}w_{ij}\in T_{ij}^{k_1l_1}$ since this element is in the larger set $T_{ij}^{k_2l_2}$.
Therefore, $|T_{ij}^{k_2l_2}|=|T_{ij}^{k_1l_1}|+1$, as claimed.
\end{proof}

We are now prepared to calculate the size of $T$, namely the sum of all $|T_{ij}^{kl}|$ whenever $w_{ij}w_{kl}\in [\init(\Lambda)]_2$.
We have shown that for a fixed pair $i,j$, the values $|T_{ij}^{kl}|$ are consecutive integers when ordered according to $\tau$.
We have also described that consecutive sum of integers.
By construction, there are exactly $|S_{ij}|$ many sets of the form $T_{ij}^{kl}$ and we calculated the value of the largest such $|T_{ij}^{kl}|$ for a fixed pair $i,j$, namely $|T^\max_{ij}|$.

\begin{theorem}\label{consecutiveSum}
Adopt  \Cref{def:TandS}.
For a fixed $i\leq j$ with $S_{ij}\neq \emptyset$, the partial sum $\displaystyle \sum_{k,l} |T_{ij}^{kl}|$ is calculated by a sum of consecutive integers. In particular, 
\[\sum_{k,l} |T_{ij}^{kl}|=\dfrac{(|S_{ij}|)(2|T^\max_{ij}|-|S_{ij}|+1)}{2}.\]
Further, when considering the sum of all possible $T_{ij}^{kl}$ we have, 
\[|T|=\sum_{i,j,k,l} |T_{ij}^{kl}|=\frac{14d^6+30d^5-40d^4-330d^3-694d^2+1740d-4320}{6!}.\]
\end{theorem}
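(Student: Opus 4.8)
The plan is to establish the two displayed identities in turn, proving the single-subscript partial-sum formula first and then assembling the global count from it. For a fixed pair $i\leq j$ with $S_{ij}\neq\emptyset$, I would reason as follows. By \Cref{def:TandS} there is exactly one set $T_{ij}^{kl}$ for each variable $w_{kl}\in S_{ij}$, so the number of these sets is precisely $|S_{ij}|$, and each is non-empty because $w_{kl}^2 w_{ij}\in T_{ij}^{kl}$ (take the extra factor $w_{ab}=w_{kl}$, which satisfies $w_{ab}\leq_\omega w_{kl}$). For a fixed subscript, the ordering $\tau$ on these sets is exactly the $\omega$-ordering of the indexing variables $w_{kl}$, and \Cref{corij} shows that moving from one set to the next-larger one increases cardinality by exactly $1$. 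Hence the values $\{|T_{ij}^{kl}|\}$ form a strictly increasing block of $|S_{ij}|$ consecutive integers topping out at $|T^\max_{ij}|$, namely $|T^\max_{ij}|-|S_{ij}|+1,\ldots,|T^\max_{ij}|$; summing this arithmetic progression yields $\tfrac{(|S_{ij}|)(2|T^\max_{ij}|-|S_{ij}|+1)}{2}$, which is the claimed partial-sum formula.

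For the global count I would first pin down the contributing pairs. By \Cref{S_empty} the sets $S_{ii}$, $S_{12}$, $S_{13}$, $S_{23}$ are all empty, while \Cref{S1j,Sij} show that $S_{ij}\neq\emptyset$ exactly for $(i,j)=(1,j)$ with $4\leq j\leq d$ and for $2\leq i<j\leq d$ with $j\geq 4$. Summing the partial-sum formula over these two families gives
\begin{align*}
|T|=\sum_{j=4}^{d}\frac{|S_{1j}|\bigl(2|T^\max_{1j}|-|S_{1j}|+1\bigr)}{2}+\sum_{j=4}^{d}\sum_{i=2}^{j-1}\frac{|S_{ij}|\bigl(2|T^\max_{ij}|-|S_{ij}|+1\bigr)}{2},
\end{align*}
into which I would substitute $|S_{1j}|=\tfrac{j(j-3)}{2}$ and $|S_{ij}|=\tfrac{(j-i)(i+j-1)}{2}$ from \Cref{S1j,Sij}, together with $|T^\max_{ij}|=\tfrac{d^2-2dj+3d+2j^2-4j+2i}{2}$ from \Cref{sizeT} (which holds uniformly for all $1\leq i<j$). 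The inner sum over $i$ and then the outer sum over $j$ are evaluated using the standard Faulhaber power-sum formulas, producing a polynomial in $d$.

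The main obstacle is entirely computational: the global count is a double polynomial summation whose summands are cubic in $i,j,d$, so the bookkeeping is long and error-prone. To keep it honest I would treat the $i=1$ family separately, since its cardinality $|S_{1j}|$ genuinely differs from the $i\geq 2$ formula (by an additive $j$, reflecting the extra constraint $1<k$ in \Cref{S1j}), and then verify the resulting degree-six polynomial against the worked case $d=4$: there the partial sums evaluate to $13$, $30$, and $24$ for $(i,j)=(1,4),(2,4),(3,4)$, giving $|T|=67$ in agreement with the example and with the value of $\tfrac{14d^6+30d^5-40d^4-330d^3-694d^2+1740d-4320}{6!}$ at $d=4$.
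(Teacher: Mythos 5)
Your proposal is correct and follows essentially the same route as the paper: it derives the partial-sum formula as an arithmetic progression of $|S_{ij}|$ consecutive integers ending at $|T^{\max}_{ij}|$ via \Cref{corij}, then sums over the nonempty $S_{ij}$ (treating $i=1$ and $i\geq 2$ separately) using the counts from \Cref{S_empty,S1j,Sij,sizeT}. Your sanity check against the $d=4$ example (partial sums $13$, $30$, $24$, total $67$) is consistent with the paper's table and with the closed-form polynomial.
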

\begin{proof}
Recall that if $w_{k_1 l_1}<w_{k_2 l_2}$ are consecutive in $S_{ij}$ then $|T_{ij}^{k_2l_2}|=|T_{ij}^{k_1l_1}|+1$ by  \Cref{corij}.
We also know that there are exactly $|S_{ij}|$ many sets of the form $T_{ij}^{kl}$. 
The largest number in the consecutive sum is $|T^\max_{ij}|$. 
Thus smallest number in the consecutive sum is $|T^\max_{ij}|-|S_{ij}|+1$. 
Hence, for a fixed $i,j$, the partial sum is  \[\displaystyle \sum_{k,l} |T_{ij}^{kl}|=\frac{(|S_{ij}|)(|T^\max_{ij}|+(|T^\max_{ij}|-|S_{ij}|+1))}{2}=\frac{(|S_{ij}|)(2|T^\max_{ij}|-|S_{ij}|+1)}{2}.\]

The proof that the value $|T|$ is as claimed is an algebraic manipulation of finite sums and some details will be omitted.
In the calculation below, we assume without loss of generality that $i< j$ and $j\geq 4$.
Substituting in the values for $|S_{1j}|, |S_{ij}|, |T^\max_{1j}|, |T^\max_{ij}|$ in  \Cref{S1j,Sij,sizeT} we see the following.
\begin{align*}
\text{For } i=1: \hspace{.1in} \sum_{k,l} |T_{1j}^{kl}|&=\frac{(\frac{j(j-3)}{2})[2(\frac{d^2-2dj+3d+2j^2-4j+2}{2})-\frac{j(j-3)}{2}+1]}{2}\\
	&=\frac{j(j-3)[2(d^2-2dj+3d+2j^2-4j+2)-j(j-3)+2]}{8}\\
\text{For } i\neq1: \hspace{.1in} \sum_{k,l} |T_{ij}^{kl}|&=\frac{\frac{(j-i)(i+j-1)}{2}[2(\frac{d^2-2dj+3d+2j^2-4j+2i}{2})-\frac{(j-i)(i+j-1)}{2}+1]}{2}\\ 
	&=\frac{(-2)i^4}{8}+\frac{(-2d^2+4dj-6d+4j)i^2}{8}+\frac{(2d^2-4dj+6d+4j^2-8j+2)i}{8}\\
	&\hspace{.2in}+\frac{(2 d^2 j^2 - 2 d^2 j - 4 d j^3 + 10 d j^2 - 6 d j + 2 j^4 - 8 j^3 + 8 j^2 - 2 j)}{8}\end{align*}

Now we have,
\begin{align*}
|T|&=\sum_{i,j,k,l} |T_{ij}^{kl}|=\sum_{j=4}^d \left( \sum_{i=1}^{j-1} \left (\sum_{k,l} |T_{ij}^{kl}|\right)\right)	\\
&=\sum_{j=4}^d  \left(\frac{32}{120}\right)j^5+\left(\frac{-40d-105}{120}\right)j^4+\left(\frac{20d^2+240d+70}{120}\right)j^3\\
	&\hspace{.2in}+\left(\frac{-30d^2+10d+45}{120}\right)j^2+\left(\frac{-50d^2-150d-162}{120}\right)j 	\\
&=\frac{14d^6+30d^5-40d^4-330d^3-694d^2+1740d-4320}{6!}
\end{align*}

\end{proof}

We will now count those elements of $[\init(\Lambda)]_3$ which are not in $T$. 
Let $G=[\init(\Lambda)]_3\setminus T$. 
Clearly, $G$ and $T$ are disjoint sets in $[\init(\Lambda)]_3$, so in particular $HF_{\Lambda}(3)\geq |G|+|T|$.
We will compute a lower bound for $|G|$ and show that when added to $|T|$ computed above, we obtain exactly $HF_{I(X)}(3)$.
This implies $ HF_{\Lambda}(3)=HF_{I(X)}(3)$, completing the proof of the main theorem.
We will begin with defining some sets of degree three monomials.

\begin{definition}\label{Gi}
Adopt  \Cref{familydata}.
Let $G=[\init(\Lambda)]_3\setminus T$ and 
\begin{align*}
G_1&= \{w_{13}w_{23}^2, w_{13}^2w_{23}, w_{22}w_{13}w_{23}, w_{23}^3,w_{12}w_{1d}^2, w_{22}w_{1d}^2 \} \\
G_2&=\{w_{1i} w_{1j} w_{1k} \hspace{.05in} \text{such that} \hspace{.05in} 3\leq i\leq j\leq k \leq d\} \\
G_3&=\{w_{13}w_{23} w_{ij} \hspace{.05in} \text{such that} \hspace{.05in} 3\leq i \leq j \leq d\}\\
G_4&=\{w_{23}^2 w_{ij} \hspace{.05in} \text{such that} \hspace{.05in} 3\leq i \leq j \leq d\}.
\end{align*}

\end{definition}
Notice that $G_i$ are disjoint sets.
We will show that $G_i\subset G$ and so $|G|\geq \sum_{i=1}^4 |G_i|$.
For each monomial $\pmb{w} \in G_i$ we will show that $\pmb{w} \not \in T$ and
we will find a corresponding element $F\in \Lambda$ such that $\init_\omega(F)=\pmb{w}$.
The following remark will list generators used in these proofs.
We will \underline{mark} the leading term, when known.
We typically write the monomials order by $\omega$ from left to right to more easily observe the leading terms.

\begin{remark}\label{rem:GBgens}
Select the $4\times 4$ principal submatrix $[12ij;]$ of $\mathcal M$ such that $i<j$. 
Recall that the sign of the terms comes from their relative position inside the principal submatrix. 
As an abuse of notation, we include the option for $j=d$, however, to obtain a valid generator, we must first set the variable $w_{dd}=0$. 
In each of the elements below, the leading term is \underline{marked}.
\begin{enumerate}[(A)]
\item Elements of $\Lambda_0$
\begin{itemize}
\item $f_1(i,j)=[12|ij]=\underline{ w_{1i}w_{2j}}-w_{2i}w_{1j}$
\item $f_2(i,j)=[1i|2j]=-\underline{ w_{2i}w_{1j}}+w_{12}w_{ij}$
\item $f_3(i,j)=[1j|2i]=-\underline{ w_{1i}w_{2j}}+w_{12}w_{ij}$
\end{itemize}
\item Elements of $\Lambda_1$ 
\begin{itemize}
\item $g_{1}(i,j)=[2i|2j]-[1j|1i]=\underline{ w_{2i}w_{2j}}-w_{1i}w_{1j}-w_{22}w_{ij}+w_{11}w_{ij}$ 
\item $g_{2}(i,j)=[ij|2i]+[12|1j]=-\underline{ w_{ii}w_{2j}}+w_{2i}w_{ij}-w_{12}w_{1j}+w_{11}w_{2j}$ 
\item $g_{3}(i,j)=[ij|2j]-[12|1i]=\underline{w_{2j}w_{ij}}-w_{2i}w_{jj}-w_{12}w_{1i}+w_{11}w_{2i}$ 
\item $g_{4}(i,j)=[ij|1i]-[12|2j]=\underline{w_{ii}w_{1j}}-w_{1i}w_{ij}-w_{22}w_{1j}+w_{12}w_{2j}$ 
\item $g_{5}(i,j)=[12|2i]+[ij|1j]=-\underline{w_{1j}w_{ij}}+w_{1i}w_{jj}-w_{22}w_{1i}+w_{12}w_{2i}$ 
\item $g_6(i,j)=[2j|1j]-[1i|2i]\pm=\underline{ w_{1j}w_{2j}} -w_{1i}w_{2i}-w_{12}w_{jj}+w_{12}w_{ii}$ 
\end{itemize}

\item Elements of $\Lambda_2$ 
\begin{itemize}
\item $h_1(i,j)=([1j]+[2i])-([12]+[ij])=\underline{ w_{ij}^2}-w_{1j}^2-w_{ii}w_{jj}-w_{2i}^2+w_{22}w_{ii}+w_{12}^2+w_{11}w_{jj}-w_{11}w_{22}$ 
\item $h_2(i,j)=([1j]+[2i])-([1i]+[2j])=\underline{ w_{2j}^2}-w_{1j}^2-w_{2i}^2+w_{1i}^2-w_{22}w_{jj}+w_{22}w_{ii}+w_{11}w_{jj}-w_{11}w_{ii}$ 
\end{itemize}

\end{enumerate}
\end{remark}

\begin{lemma}\label{g1}
Adopt  \Cref{Gi}. There is a containment of sets $G_1\subseteq G$.
In particular, $|G_1|=6$. 
\end{lemma}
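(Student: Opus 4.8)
The plan is to separate the two assertions of the statement. The equality $|G_1| = 6$ is immediate: the six monomials listed in \Cref{Gi} are pairwise distinct, since for $d \geq 4$ the variable $w_{1d}$ differs from $w_{13}, w_{22}, w_{23}$, so no two of the displayed products coincide. The substance is the containment $G_1 \subseteq G$. Since $G = [\init(\Lambda)]_3 \setminus T$, I would prove, for each monomial $\pmb{w} \in G_1$ in turn, both that $\pmb{w} \in [\init(\Lambda)]_3$ and that $\pmb{w} \notin T$.

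I would dispatch $\pmb{w} \notin T$ first, as it is routine. By \Cref{HF2} we have $[\init(\Lambda)]_2 = K_0 \cup K_1 \cup K_2$, and by \Cref{def:TandS} a degree-three monomial lies in $T$ exactly when it has a degree-two divisor in $[\init(\Lambda)]_2$; equivalently, writing such a divisor as $w_{ab}w_{cd}$ with $w_{ab} \leq_\omega w_{cd}$, it lies in $[\init(\Lambda)]_2$ iff $w_{ab} \in S_{cd}$. So for each $\pmb{w}$ I would enumerate its at most two distinct degree-two divisors and rule each one out. For instance, the divisors of $w_{13}w_{23}^2$ are $w_{13}w_{23}$ and $w_{23}^2$: the first is not in $[\init(\Lambda)]_2$ because $S_{23} = \emptyset$ by \Cref{S_empty}, and the second is $w_{23}^2$, whose larger index $3$ violates the requirement $j \geq 4$ for $K_2$. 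The remaining divisors to be checked — $w_{13}^2$, $w_{22}w_{13}$, $w_{22}w_{23}$, $w_{12}w_{1d}$, $w_{22}w_{1d}$, and $w_{1d}^2$ — are handled the same way: the mixed products fail because the relevant $S_{cd}$ is either empty (\Cref{S_empty}) or excludes the smaller factor by \Cref{S1j} or \Cref{Sij}, and the squares $w_{13}^2, w_{1d}^2$ fail the index requirement $i \geq 2$ of $K_2$.

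The membership $\pmb{w} \in [\init(\Lambda)]_3$ is the main obstacle. The key point, which also explains why these monomials are singled out, is that $\pmb{w}$ cannot arise as a variable times a degree-two generator of \Cref{rem:GBgens}: any such product would have the generator's leading term as a degree-two divisor lying in $[\init(\Lambda)]_2$, forcing $\pmb{w} \in T$ and contradicting what was just shown. Consequently every witness $F \in \Lambda$ with $\init_\omega(F) = \pmb{w}$ must be produced by a genuine cancellation of leading terms, that is, by an S-polynomial-type combination of variable multiples of the generators $f_k, g_k, h_k$ of \Cref{rem:GBgens} for a suitable choice of $(i,j)$, supported on a single $4 \times 4$ principal submatrix. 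For each of the six monomials I would exhibit one explicit such $F$ and then verify against the order $\omega$ that the advertised monomial is indeed its leading term; the latter check is mechanical once $F$ is written down. Finding the correct combinations is the creative step the text attributes to S-polynomial computation, and it is where essentially all the difficulty of the lemma resides.

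Putting the two halves together yields $\pmb{w} \in G$ for every $\pmb{w} \in G_1$, hence $G_1 \subseteq G$, which together with the count $|G_1| = 6$ completes the proof.
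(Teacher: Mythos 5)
Your treatment of the two routine halves matches the paper's: the count $|G_1|=6$ is immediate, and your exclusion of each monomial from $T$ by enumerating its degree-two divisors and appealing to \Cref{S_empty}, \Cref{S1j}, and the index constraints on $K_2$ is the same argument the paper runs through the emptiness of $S_{13},S_{23},S_{22},S_{12}$ and the non-membership of $w_{12},w_{22},w_{1d}$ in $S_{1d}$. Your observation that no element of $G_1$ can arise as a variable times a degree-two generator of \Cref{rem:GBgens} (else it would land in $T$) is also correct and is a useful explanation of why a genuine cancellation of leading terms is forced.

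However, there is a genuine gap at exactly the point you yourself flag as containing ``essentially all the difficulty'': you never produce the witnesses. The containment $G_1\subseteq[\init(\Lambda)]_3$ requires, for each of the six monomials, an explicit element of $\Lambda$ whose leading term with respect to $\omega$ is that monomial, and asserting that suitable S-polynomial-type combinations exist is not a proof that they do --- a priori the cancellations could fail to terminate on the desired monomial, or the desired monomial could fail to be in $\init(\Lambda)$ at all. The paper supplies six explicit combinations, all supported on the principal submatrix $[123d;]$ after setting $w_{dd}=0$: for instance $F_1=-w_{2d}\cdot f_2(3,d)-w_{23}\cdot g_6(3,d)$ with leading term $w_{13}w_{23}^2$, and $F_4=w_{1d}\cdot f_2(3,d)+w_{2d}\cdot g_1(3,d)-w_{23}\cdot h_2(3,d)$ with leading term $w_{23}^3$, together with $F_2,F_3,F_5,F_6$ built similarly from $f_3(3,d)$, $g_3(3,d)$, $g_5(3,d)$, $h_1(3,d)$, and $h_2(3,d)$. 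Without these (or equivalent) explicit elements and the verification of their leading terms, the lemma is unproved; everything else in your write-up is correct scaffolding around the missing core.
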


\begin{proof}
We begin by showing that if $\pmb{w}\in G_1$, then $\pmb{w}\not\in T$.
Since $S_{13}=S_{23}=S_{22}=\emptyset$, the monomials $w_{13}w_{23}^2, w_{13}^2w_{23}, w_{22}w_{13}w_{23}, w_{23}^3\not \in T$.  
Similarly, since $S_{12}=S_{22}=\emptyset$ and $w_{12},w_{22}, w_{1d}\not\in S_{1d}$, the monomials $w_{12}w_{1d}^2, w_{22}w_{1d}^2 \not \in T$.

We will now show that if $\pmb{w}\in G_1$, then it is the leading monomial of an element $F_i\in \Lambda$ listed below.
Notice that these are obtained from elements in  \Cref{rem:GBgens}.
In particular, after setting $w_{dd}=0$, 
\begin{align*}
F_1=&-w_{2d} \cdot f_2(3,d)-w_{23}\cdot g_6(3,d)=\underline{w_{13}w_{23}^2}-w_{2d}w_{12}w_{3d}  -w_{23}w_{12}w_{33}\\
F_2=& -w_{1d} \cdot f_3(3,d)-w_{13}\cdot g_6(3,d))= \underline{w_{13}^2w_{23}}-w_{1d} w_{12}w_{3d}-w_{13}w_{12}w_{33}  \\                   
F_3=&  w_{3d} \cdot f_2(3,d)-w_{23}\cdot g_5(3,d)= \underline{w_{23}w_{22}w_{13} }+w_{12}w_{3d}^2  -w_{23}w_{12}w_{24}  \\
F_4=& w_{1d}\cdot f_2(3,d)+  w_{2d}\cdot g_1(3,d)-w_{23}\cdot h_2(3,d)\\
=& \underline{w_{23}^3}-w_{13}w_{1d}w_{2d}-w_{13}^2w_{23}-w_{22}w_{2d}w_{3d}-w_{22}w_{23}w_{33}+w_{12}w_{1d}w_{3d}+w_{11}w_{2d}w_{3d}+w_{11}w_{23}w_{33}\\
F_5=& w_{3d}\cdot f_3(3,d)+w_{13}\cdot g_3(3,d)-w_{12}\cdot h_1(3,d)\\
=&\underline{w_{12}w_{1d}^2}+w_{12}w_{23}^2-w_{12}w_{13}^2-w_{12}w_{22}w_{33}-w_{12}^3+w_{11}w_{13}w_{23}+w_{11}w_{12}w_{22}\\
\end{align*}
\begin{align*}
F_6=&-w_{3d}\cdot g_1(3,d)+ w_{23}\cdot g_3(3,d)+ w_{13}\cdot g_5(3,d) -w_{22}h_1(3,d)\\
   =&\underline{w_{22}w_{1d}^2}+w_{22}w_{23}^2-w_{22}w_{13}^2-w_{22}^2w_{33}-w_{22}w_{12}^2-w_{11}w_{3d} ^2    +w_{11}w_{23}^2   +w_{11}w_{22}^2     \\
\end{align*}
Since $F_i\in \Lambda$ and  $G_1=\{\init(F_i)\}$, we see that $G_1\subset G$, as claimed.
The size $|G_1|=6$ is clear.
\end{proof}

\begin{lemma}\label{g2}
Adopt  \Cref{Gi}. There is an inclusion of sets $G_2 \subset G$.
Further \[|G_2|=(d-2)+(d-2)(d-3)+{{d-2}\choose 3}.\]
\end{lemma}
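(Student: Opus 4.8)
The plan is to verify the two defining conditions for membership in $G=[\init(\Lambda)]_3\setminus T$ for each monomial $\pmb{w}=w_{1i}w_{1j}w_{1k}$ with $3\le i\le j\le k\le d$: first that $\pmb{w}\notin T$, and second that $\pmb{w}\in[\init(\Lambda)]_3$, i.e. that $\pmb{w}$ is the $\omega$-leading monomial of some $F\in\Lambda$. The formula for $|G_2|$ will then follow by organizing the monomials according to the coincidence pattern of the indices.

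For the containment $\pmb{w}\notin T$, I would observe that every degree-two divisor of $w_{1i}w_{1j}w_{1k}$ has the form $w_{1a}w_{1b}$ or $w_{1a}^2$ with $a,b\in\{i,j,k\}\subseteq\{3,\dots,d\}$. By \Cref{HF2} we have $[\init(\Lambda)]_2=K_0\cup K_1\cup K_2$, so it suffices to check that no such divisor lies in these three sets. A product $w_{1a}w_{1b}$ shares the index $1$, hence cannot belong to $K_0$ (which requires four distinct indices); it cannot be an antidiagonal product $w_{il}w_{jl}$ in $K_1$ (whose repeated index satisfies $l\ge 2$) nor a diagonal product $w_{ij}w_{ll}$ (both factors $w_{1a},w_{1b}$ being off-diagonal); and $w_{1a}^2\notin K_2$, since $K_2$ requires the smaller index to be at least $2$. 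Equivalently, writing the divisor with larger factor $w_{1b}$, membership would force $w_{1a}\in S_{1b}$, which fails by \Cref{S1j} because that set requires the smaller index of its members to exceed $1$. Thus $\pmb{w}$ has no degree-two divisor in $\init(\Lambda)$ and $\pmb{w}\notin T$.

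The principal work is producing, for each such $\pmb{w}$, an explicit $F\in\Lambda$ with $\init_\omega F=w_{1i}w_{1j}w_{1k}$, exactly as was done for $G_1$ in \Cref{g1}. Because $w_{1i}w_{1j}w_{1k}$ is built from the relatively small variables $w_{1\bullet}$, it can surface as a leading term only after the larger terms in any naive product are cancelled, which is why these elements are the ones first uncovered through S-polynomial computations. Concretely, I would begin from a quadric carrying the monomial $w_{1j}w_{1k}$, namely $g_1(j,k)=\underline{w_{2j}w_{2k}}-w_{1j}w_{1k}-w_{22}w_{jk}+w_{11}w_{jk}$ of \Cref{rem:GBgens}, multiply by $w_{1i}$, and successively subtract linear-form multiples of the generators $f_1,f_2,f_3,g_2,\dots,g_6,h_1,h_2$ so as to telescope away every term exceeding $w_{1i}w_{1j}w_{1k}$ in the order $\omega$; for example the leading term $w_{1i}w_{2j}w_{2k}$ of $w_{1i}\,g_1(j,k)$ is removed using $w_{2k}\,f_1(i,j)$, after which further reductions continue. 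The verification splits into cases according to whether $i,j,k$ are distinct or have coincidences, together with the boundary case $k=d$, where one first sets $w_{dd}=0$ and checks that this does not disturb the leading term. I expect this explicit cancellation, and the bookkeeping needed to confirm that $w_{1i}w_{1j}w_{1k}$ survives as the $\omega$-largest surviving term in every case, to be the main obstacle: it is routine but laborious, and the case $k=d$ must be handled carefully because the vanishing of $w_{dd}$ modifies several of the generators.

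Finally, for the count, the monomials of $G_2$ are in bijection with the size-three multisets drawn from $\{3,\dots,d\}$, a set of $d-2$ elements, and distinct multisets yield distinct (hence linearly independent) monomials. Splitting by multiplicity pattern gives $d-2$ monomials with $i=j=k$, then $2\binom{d-2}{2}=(d-2)(d-3)$ monomials with exactly two of the indices equal, and $\binom{d-2}{3}$ monomials with $i<j<k$. Summing these disjoint cases yields $|G_2|=(d-2)+(d-2)(d-3)+\binom{d-2}{3}$, as claimed.
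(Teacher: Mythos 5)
Your overall architecture matches the paper's: show each $w_{1i}w_{1j}w_{1k}\notin T$, exhibit an element of $\Lambda$ with that monomial as $\omega$-leading term, and count by multiplicity pattern. The non-membership argument (every degree-two divisor $w_{1a}w_{1b}$ would force $w_{1a}\in S_{1b}$, contradicting \Cref{S1j}) is exactly the paper's, and your count $(d-2)+(d-2)(d-3)+\binom{d-2}{3}$ is correct and identically organized.

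The gap is in the middle step. You describe a reduction procedure --- start from $w_{1i}\,g_1(j,k)$, which carries the term $-w_{1i}w_{1j}w_{1k}$, and ``successively subtract linear-form multiples of the generators so as to telescope away every term exceeding $w_{1i}w_{1j}w_{1k}$'' --- and then state that you \emph{expect} this to work. But whether every $\omega$-larger term can be cancelled by elements of $\Lambda$ is precisely the content of the claim $w_{1i}w_{1j}w_{1k}\in[\init(\Lambda)]_3$; if some larger monomial survived all such reductions, the lemma would be false for that index triple. A proof must actually produce the element, in each of the coincidence cases you correctly anticipate ($i\le j<k$, $i<j=k$, $3<i=j=k$, $i=j=k=3$, plus the $w_{dd}=0$ adjustment when the top index is $d$). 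The paper does exactly this, and the combinations turn out to be very short: for instance $F_1=w_{2j}\cdot f_3(i,k)-w_{1i}\cdot g_1(j,k)$ handles $i\le j<k$ in one cancellation (the $w_{1i}w_{2j}w_{2k}$ terms cancel and nothing larger than $w_{1i}w_{1j}w_{1k}$ remains), while the cases with repeated indices additionally use $h_2$ and $g_6$ from \Cref{rem:GBgens}. So your starting point is the right one, but the proof is incomplete until these explicit combinations (or an equivalent termination argument for your reduction) are written down and their leading terms verified.
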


\begin{proof}
Let $3\leq i\leq j\leq k \leq d$.
We begin by showing that if $\pmb{w}\in G_2$, then $\pmb{w}\not\in T$.
By  \Cref{S1j}, we see $w_{1i},w_{1j}\not \in S_{1k}$. 
In particular, $w_{1i} w_{1j} w_{1k}\not \in T$.

We will now show that if $\pmb{w}\in G_2$, then it is the leading monomial of an element $F_i\in \Lambda$ listed below.
Notice that these are obtained from elements in  \Cref{rem:GBgens} and will depend on potential equalities of $i,j,k$.
For $F_1$, let $3\leq i\leq j<k\leq d$; for $F_2$, let $3\leq i<j=k\leq d$; for $F_3$, let $3< i=j=k\leq d $; and for $F_4$, let $i=j=k=3$.
Notice that if $j=d$ the proof is the same except for setting $w_{jj}=w_{dd}=0$, which will not affect any leading terms.

\begin{align*}
F_1&=w_{2j}\cdot f_3(i,k)-w_{1i}\cdot g_1(j,k))=\underline{w_{1i}w_{1j}w_{1k}}+w_{22}w_{1i}w_{jk}+w_{12}w_{2j}w_{ik}-w_{11}w_{1i}w_{jk}\\
F_2&=-w_{2j}\cdot f_3(i,j)-w_{1i}\cdot h_2(i,j)=\underline{w_{1i}w_{1j}^2}+w_{1i}w_{2i}^2\\
&-w_{1i}^3+w_{22}w_{1i}w_{jj}-w_{22}w_{1i}w_{ii}-w_{12}w_{2j}w_{ij}-w_{11}w_{1i}w_{jj}+w_{11}w_{1i}w_{ii}\\
F_3&=w_{2j}\cdot g_6(3,j)-w_{1j}\cdot h_2(3,j)=\underline{w_{1j}^3}+w_{23}^2w_{1j}-w_{13}w_{23}w_{2j}-w_{13}^2w_{1j}\\
&+w_{22}w_{1j}w_{jj}-w_{22}w_{33}w_{1j}-w_{12}w_{2j}w_{jj}+w_{12}w_{33} w_{2j} -w_{11}w_{1j}w_{jj}+w_{11}w_{33}w_{1j}\\
F_4 &=-w_{2d}\cdot f_1(3,d)-w_{2d}\cdot f_2(3,d)-w_{1d}\cdot g_1(3,d)-w_{23}\cdot g_6(3,d)+w_{13}\cdot h_2(3,d)\\
&=\underline{w_{13}^3}+w_{22}w_{1d}w_{3d}+w_{22}w_{13}w_{33}-w_{12}w_{2d}w_{3d}-w_{12}w_{23}w_{33}-w_{11}w_{1d}w_{3d}-w_{11}w_{13}w_{33}
\end{align*}
Since $F_i\in \Lambda$ and  $G_2=\{\init(F_i)\}$, we see that $G_2\subset G$, as claimed.

Regarding the size of $G_2$ notice that either $i=j=k$ , two are equal, or they are all distinct. 
If $i=j=k$, there are $d-2$ elements since $i=j=k$ can range from $3$ to $d$.
If two are equal, there are $d-2$ ways to select the index for the squared term and $d-3$ ways to select the other index.
This gives a total of $(d-2)(d-3)$ elements.
Finally, if all indices are distinct, there are  ${d-2\choose 3}$ ways to select them.
The sum gives the size of $G_2$ as claimed.
\end{proof}

 \begin{lemma}\label{g3}
 Adopt  \Cref{Gi}. There is an inclusion of sets $G_3 \subset G$.
 Further $\displaystyle|G_3|=\frac{d(d-3)}{2}$.
 \end{lemma}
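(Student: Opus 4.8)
The plan is to reuse the three-part template of \Cref{g1} and \Cref{g2}: show every monomial of $G_3$ lies outside $T$, exhibit each as a leading term of an element of $\Lambda$, and then count. For the first part, fix $3\leq i\leq j\leq d$ and set $\pmb{w}=w_{13}w_{23}w_{ij}$. By the definition of $T$ in \Cref{def:TandS}, a degree three monomial lies in $T$ exactly when one of its three degree two divisors lies in $[\init(\Lambda)]_2$; and a degree two monomial $w_{cd}w_{ef}$ with $w_{cd}\leq_\omega w_{ef}$ lies in $[\init(\Lambda)]_2$ precisely when $w_{cd}\in S_{ef}$. The divisor $w_{13}w_{23}$ is excluded since $S_{23}=\emptyset$ by \Cref{S_empty}. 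Because $w_{ij}$ is the $\omega$-largest variable of $\pmb{w}$ (its maximal index $j\geq3$ dominates those of $w_{13},w_{23}$), the remaining two divisors are handled by checking $w_{13},w_{23}\notin S_{ij}$: when $i=j$ this is immediate from $S_{ii}=\emptyset$ (\Cref{S_empty}), and when $i<j$ (so $i>1$ and $j\geq4$) \Cref{Sij} forces the larger index of any element of $S_{ij}$ to satisfy $i<l<j$ or $l=j$; but the larger index of $w_{13}$ and of $w_{23}$ equals $3$, and $3\leq i$ together with $3<j$ violates both alternatives. Hence $\pmb{w}\notin T$, so $G_3\subseteq G$ once the second part is complete.

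The second part is the computational heart and the step I expect to be the main obstacle. For each $\pmb{w}=w_{13}w_{23}w_{ij}$ I must produce an explicit $F\in\Lambda$ with $\init_\omega(F)=\pmb{w}$, built as a $W$-linear combination of the degree two generators catalogued in \Cref{rem:GBgens} drawn from an appropriate $4\times 4$ principal submatrix (the generators of type $g_6$, which carry $w_{13}w_{23}$ as a non-leading term, are the natural starting point). As in \Cref{g2}, the argument splits into sub-cases according to coincidences among the indices $3,i,j$, together with the degenerate case $j=d$, where one first sets $w_{dd}=0$ and verifies this does not disturb the leading term. The difficulty is entirely bookkeeping: one must choose the generators and their variable multipliers so that every term $\omega$-larger than $w_{13}w_{23}w_{ij}$ cancels, which is exactly why these combinations were originally located through $S$-polynomial computations.

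Finally, to count $G_3$, note its monomials are indexed by pairs $(i,j)$ with $3\leq i\leq j\leq d$, of which there are $\binom{d-1}{2}=\frac{(d-1)(d-2)}{2}$. The single pair $(i,j)=(d,d)$ gives $w_{13}w_{23}w_{dd}=0$ and must be discarded, and all remaining products are distinct nonzero monomials, hence linearly independent in $W$. Therefore $|G_3|=\frac{(d-1)(d-2)}{2}-1=\frac{d(d-3)}{2}$, as claimed.
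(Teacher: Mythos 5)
Your first and third parts are correct and essentially reproduce the paper's argument: the exclusion $G_3\cap T=\emptyset$ follows from $S_{23}=S_{ii}=\emptyset$ together with \Cref{Sij}, and your count $\binom{d-1}{2}-1=\frac{d(d-3)}{2}$ agrees with the paper's (which organizes the same count by the cases $i=j$ and $i<j$).

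The second part, however, is a genuine gap, and you flag it yourself: you never produce, for a given $\pmb{w}=w_{13}w_{23}w_{ij}$, an element $F\in\Lambda$ with $\init_\omega(F)=\pmb{w}$. This is the substantive half of the claim. Showing $\pmb{w}\notin T$ only says these monomials are not multiples of degree-two leading terms; it does not place them in $[\init(\Lambda)]_3$, which is what $G_3\subset G=[\init(\Lambda)]_3\setminus T$ requires. The paper settles this with three explicit combinations, split by the coincidence pattern of the indices: for $3<i<j$, $F_1=w_{13}\cdot[2j|3i]-w_{3j}\cdot f_3(3,i)=w_{13}w_{23}w_{ij}-w_{12}w_{3i}w_{3j}$; for $i=3<j$, $F_2=-w_{33}\cdot f_3(3,j)+w_{13}\cdot g_2(3,j)$; and for $3\leq i=j<d$, $F_3=w_{1i}\cdot g_3(i,d)+w_{2d}\cdot g_4(i,d)-w_{ii}\cdot g_6(3,d)$. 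Note in particular that $F_1$ uses the $A_0$-minor $[2j|3i]$, drawn from the principal submatrix on rows $\{2,3,i,j\}$ rather than from the catalogue of \Cref{rem:GBgens} (whose generators all live in $[12ij;]$), so your proposed starting point of building everything from the $g_6$-type generators would not suffice in the generic case $3<i<j$. Until such combinations are exhibited and their leading terms checked against $\omega$ (including the degenerate $j=d$ case), the inclusion $G_3\subset G$ is not established.
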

 
 \begin{proof}
 Let $3\leq i \leq j \leq d$.
 We begin by showing that if $\pmb{w}\in G_3$, then $\pmb{w}\not\in T$.
By  \Cref{S_empty}, we see $S_{13},S_{23}, S_{ii}$ are all empty.
In particular, if  $ w_{13}w_{23} w_{ij}\in T$ then $3<i<j\leq d$ with $w_{13}$ or $w_{23}$ in $S_{ij}$.
However, by \Cref{Sij}, this would require $i<\max\{1,2,3\}$ or $\max\{1,2,3\}=j$, which is not possible. 
In particular, $w_{13}w_{23} w_{ij}\not \in T$.

We will now show that if $\pmb{w}\in G_3$, then it is the leading monomial of an element $F_i\in \Lambda$ listed below.
Notice that these are obtained from elements in  \Cref{rem:GBgens} and will depend on potential equalities  of $i,j$.
For $F_1$, let $3<i<j$; for $F_2$, let $3=i<j$; and for $F_3$, let $3\leq i=j <d$.
Notice that if $j=d$ the proof is the same except for setting $w_{jj}=w_{dd}=0$, which will not affect any leading terms.

\begin{align*}
F_1&=w_{13}\cdot [2j|3i]-w_{3j}\cdot f_3(3,i)=\underline{w_{13}w_{23}w_{ij}}-w_{12}w_{3i}w_{3j}\\
F_2 &=-w_{33}\cdot f_3(3,j)+w_{13}\cdot g_2(3,j)\\
&=\underline{w_{13}w_{23}w_{3j}}-w_{12}w_{33}w_{3j}-w_{12}w_{13}w_{1j}+w_{11}w_{13}w_{2j}\\
F_3&=w_{1i}\cdot g_3(i,d)+w_{2d}\cdot g_4(i,d)-w_{ii}\cdot g_6(3,d)\\
&=\underline{w_{13}w_{23}w_{ii}}-w_{22}w_{1d}w_{2d}+w_{12}w_{2d}^2-w_{12}w_{ii}^2  -w_{12}w_{1i}^2+w_{11}w_{1i}w_{2i}  \\\end{align*}
Since $F_i\in \Lambda$ and  $G_3=\{\init(F_i)\}$, we see that $G_3\subset G$, as claimed.

 Regarding the size of $G_3$, notice that when $i=j$, there are $d-3$ possibilities since both cannot be $d$.
When $i< j$, for each fixed $i$, there are $d-i$ possibilities for $j$.
Therefore $|G_3|=(d-3)+\sum_{i=3}^{d-1}(d-i)=(d-3)+\frac{(d-2)(d-3)}{2}=\frac{d(d-3)}{2}$, as claimed.
 \end{proof}

 \begin{lemma}\label{g4}
 Adopt  \Cref{Gi}. 
There is an inclusion of sets $G_4\subset G$.
Further  $|G_4|=\frac{d(d-3)}{2}$.
 \end{lemma}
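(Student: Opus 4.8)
The plan is to follow the same two-part template used in \Cref{g1,g2,g3}: first I would show that no element of $G_4$ lies in $T$, then I would exhibit, for each $w_{23}^2w_{ij}\in G_4$, an explicit $F\in\Lambda$ with $\init_\omega(F)=w_{23}^2w_{ij}$, so that $G_4\subseteq \init(\Lambda)\setminus T=G$; the computation of $|G_4|$ is then a short combinatorial count. Since $G_3$ and $G_4$ are structurally parallel (both built from the small antidiagonal variables $w_{13},w_{23}$ and both of size $\tfrac{d(d-3)}{2}$), I expect the argument to mirror \Cref{g3} closely.

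For the first part, fix $\pmb{w}=w_{23}^2w_{ij}$ with $3\le i\le j\le d$. Its only degree-two divisors are $w_{23}^2$ and $w_{23}w_{ij}$. The monomial $w_{23}^2$ is not in $[\init(\Lambda)]_2$ because $S_{23}=\emptyset$ by \Cref{S_empty}. For $w_{23}w_{ij}$, both indices of $w_{ij}$ are at least $3$, so $w_{23}<_\omega w_{ij}$; hence $w_{23}w_{ij}\in[\init(\Lambda)]_2$ would force $w_{23}\in S_{ij}$. When $i=j$ this is impossible since $S_{ii}=\emptyset$ by \Cref{S_empty}, and when $3\le i<j$ the characterization in \Cref{Sij} would require $i<3<j$ or $j=3$, neither of which can hold for $i\ge 3$. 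Thus no degree-two divisor of $\pmb{w}$ lies in $[\init(\Lambda)]_2$, so $\pmb{w}\notin T$ and $G_4\cap T=\emptyset$.

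For the second part, I would produce the witnesses $F$ as $W$-linear combinations of the generators $f_k(i,j)$, $g_k(i,j)$, $h_k(i,j)$ recorded in \Cref{rem:GBgens}, each multiplied by a single variable, split into cases according to the coincidences among $3,i,j$ (for instance $3<i<j$, then $3=i<j$, then $3\le i=j<d$), precisely as in \Cref{g3}; the subcase $j=d$ is handled by the same combination after setting $w_{dd}=0$, which does not disturb the leading term. The delicate point, and what I expect to be the main obstacle, is that $w_{23}^2$ is itself \emph{not} a leading term in degree two — it is one of the ``missing'' diagonal squares, reflecting $S_{23}=\emptyset$ — so none of these $F$ can be obtained simply by multiplying a degree-two element of $\init(\Lambda)$ by a variable. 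Instead $w_{23}^2w_{ij}$ must be promoted into leading position by cancelling the genuinely $\omega$-larger leading terms (such as $w_{2j}^2$ and $w_{2i}w_{2j}$) coming from the $h_2$- and $g_1$-type generators against companion generators; engineering each combination so that every term $\omega$-larger than $w_{23}^2w_{ij}$ vanishes is the real work, and these combinations were originally located through $S$-polynomial computations.

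Finally, for the count I would note that $w_{23}^2w_{dd}=0$ since $w_{dd}=0$, so the pair $(i,j)=(d,d)$ contributes no monomial. The diagonal pairs $i=j\in\{3,\dots,d-1\}$ then give $d-3$ monomials, and the off-diagonal pairs $3\le i<j\le d$ give ${{d-2}\choose 2}=\tfrac{(d-2)(d-3)}{2}$ monomials. Summing, $|G_4|=(d-3)+\tfrac{(d-2)(d-3)}{2}=\tfrac{d(d-3)}{2}$, as claimed.
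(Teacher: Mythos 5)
Your treatment of the two easy parts is correct and matches the paper: the argument that no $w_{23}^2w_{ij}$ lies in $T$ (both degree-two divisors $w_{23}^2$ and $w_{23}w_{ij}$ are excluded from $[\init(\Lambda)]_2$ via \Cref{S_empty} and \Cref{Sij}) is essentially the paper's, and the count $(d-3)+\binom{d-2}{2}=\tfrac{d(d-3)}{2}$ is right. You have also correctly diagnosed the structural difficulty, namely that $w_{23}^2\notin[\init(\Lambda)]_2$, so the monomials of $G_4$ cannot be reached as variable multiples of degree-two leading terms and must be promoted into leading position by cancellation.

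But that diagnosis is where your proof stops, and the step you defer is the actual content of the lemma. The inclusion $G_4\subseteq\init(\Lambda)$ is only established by exhibiting, for each $w_{23}^2w_{ij}$, a concrete element of $\Lambda$ with that leading term, and you never produce these combinations -- you say they ``were originally located through $S$-polynomial computations'' and describe the intended case split, which is a plan, not a proof. For comparison, the paper's witnesses are $F_1=w_{23}\cdot[2i|3j]+w_{3i}\cdot g_1(3,j)$ for $3<i<j$ (leading term $w_{23}^2w_{ij}$ after the terms $w_{23}w_{2i}w_{3j}$ and $w_{2i}w_{2j}w_{3i}$ of $\omega$-larger weight cancel), $F_2=w_{33}\cdot g_1(3,j)+w_{23}\cdot g_2(3,j)$ for $i=3<j$, and the five-term combination $-w_{2d}\cdot g_2(i,d)+w_{2i}\cdot g_3(i,d)-w_{1d}\cdot g_4(i,d)+w_{1i}\cdot g_5(i,d)-w_{ii}\cdot h_2(3,d)$ for $i=j$; verifying that every monomial $\omega$-larger than $w_{23}^2w_{ij}$ cancels in each case is exactly the nontrivial check. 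Until you write down such combinations (or equivalent ones) and verify their leading terms, the claim $G_4\subset G$ is unproved.
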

 
 \begin{proof}
 We begin by showing that if $\pmb{w}\in G_4$, then $\pmb{w}\not\in T$.
Let $3\leq i \leq j \leq d$.
By  \Cref{S_empty}, we see $S_{23}, S_{ii}$ are empty.
In particular, if  $w_{23}^2 w_{ij}\in T$ then $3<i<j\leq d$ with  $w_{23} \in S_{ij}$.
However, by \Cref{Sij}, this would require $i<\max\{2,3\}$ or $\max\{2,3\}=j$, which is not possible. 
In particular, $w_{23}^2 w_{ij}\not \in T$.

We will now show that if $\pmb{w}\in G_4$, then it is the leading monomial of an element $F_i\in \Lambda$ listed below.
Notice that these are obtained from elements in  \Cref{rem:GBgens} and will depend on the relative sizes of $i,j$.
For $F_1$, let $3<i<j$; for $F_2$, let $3=i<j$; and for $F_3$.
Notice that if $j=d$ the proof is the same except for setting $w_{jj}=0$.

\begin{align*}
F_1&= w_{23}\cdot [2i|3j]+ w_{3i}\cdot g_1(3,j)\\
&=\underline{w_{23}^2w_{ij}}-w_{13}w_{3i}w_{1j}-w_{22}w_{3i}w_{3j}+w_{11}w_{3i}w_{3j}\\
F_2&=w_{33}\cdot g_1(3,j)+w_{23}\cdot g_2(3,j)\\
&=\underline{ w_{23}^2w_{3j}}-w_{13}w_{33}w_{1j}-w_{22}w_{33}w_{3j}-w_{12} w_{23}w_{1j}+w_{11}w_{33}w_{3j}+w_{11} w_{23}w_{2j}\\
F_3&=- w_{2d}\cdot g_2(i,d)+w_{2i} \cdot g_{3}(i,d)-w_{1d}\cdot g_{4}(i,d)+w_{1i}\cdot g_{5}(i,d)-w_{ii}\cdot h_2(3,d)\\
&=\underline{w_{23}^2w_{ii}}-w_{13}^2w_{ii}-w_{22}w_{33}w_{ii}+w_{22}w_{1d}^2-w_{22}w_{1i}^2-w_{11}w_{2d}^2+w_{11}w_{2i}^2+w_{11}w_{33}w_{ii}\\
\end{align*}

 Regarding the size of $G_4$, notice that when $i=j$, there are $d-3$ possibilities since both cannot be $d$.
When $i< j$, for each fixed $i$, there are $d-i$ possibilities for $j$.
Therefore $|G_4|=(d-3)+\sum_{i=3}^{d-1}(d-i)=(d-3)+\frac{(d-2)(d-3)}{2}=\frac{d(d-3)}{2}$. \end{proof}

 \begin{lemma}\label{thm:g}
 Adopt  \Cref{Gi}.
 Then 
 \[\sum_{i=1}^4 |G_i|= \frac{120d^3+360d^2-1920d+4320}{6!}\]
 \end{lemma}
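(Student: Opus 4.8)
The plan is straightforward: this lemma asserts only a numerical identity between the arithmetic sum $\sum_{i=1}^4 |G_i|$ and a rational expression in $d$, so I would simply substitute the four cardinalities already established in \Cref{g1,g2,g3,g4} and simplify. Note that nothing about disjointness or containment needs to be invoked here (those facts matter for the later bound $|G|\geq \sum_i |G_i|$, but not for this computation); we are merely summing four explicitly computed quantities.

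First I would record the four values: $|G_1|=6$ from \Cref{g1}, $|G_2|=(d-2)+(d-2)(d-3)+\binom{d-2}{3}$ from \Cref{g2}, and $|G_3|=|G_4|=\tfrac{d(d-3)}{2}$ from \Cref{g3,g4}. The only term requiring attention is $|G_2|$. I would combine its first two summands via $(d-2)+(d-2)(d-3)=(d-2)(d-2)=(d-2)^2$ and then add $\binom{d-2}{3}=\tfrac{(d-2)(d-3)(d-4)}{6}$; clearing denominators yields $|G_2|=\tfrac{d^3-3d^2+2d}{6}=\binom{d}{3}$. Alternatively, one observes immediately that $G_2$ is precisely the set of degree-three monomials in the $d-2$ variables $w_{13},\dots,w_{1d}$, so $|G_2|=\binom{(d-2)+2}{3}=\binom{d}{3}$, which is a cleaner way to see the same count.

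Then I would sum over a common denominator of $6$:
\[
\sum_{i=1}^4 |G_i| = 6 + \frac{d(d-1)(d-2)}{6} + d(d-3) = \frac{36 + (d^3-3d^2+2d) + (6d^2-18d)}{6} = \frac{d^3+3d^2-16d+36}{6}.
\]
Finally, multiplying numerator and denominator by $120$ rewrites this as $\tfrac{120d^3+360d^2-1920d+4320}{720}$, and since $6!=720$ this is exactly the claimed expression. The argument is entirely routine finite-sum bookkeeping; the only mild subtlety is the simplification of $|G_2|$, so I do not anticipate any genuine obstacle.
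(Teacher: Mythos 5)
Your proposal is correct and follows essentially the same route as the paper: substitute the cardinalities from Lemmas \ref{g1}--\ref{g4} and simplify, with the paper merely omitting the intermediate algebra that you spell out (including the simplification $|G_2|=\binom{d}{3}$, which checks out). No further comment is needed.
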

 
 \begin{proof}
By  \Cref{g1,g2,g3,g4},
 \begin{align*}
 \sum_{i=1}^4 |G_i|&=6+\left((d-2)+(d-2)(d-3)+{{d-2}\choose 3}\right)+\frac{d(d-3)}{2}+ \frac{d(d-3)}{2}\\
 &=\frac{120d^3+360d^2-1920d+4320}{6!}
 \end{align*}

 \end{proof}
 
\begin{theorem}\label{HF3}
The Hilbert Functions of $\Lambda$ and $I(X)$ are equivalent in degree 3, that is $HF_{\Lambda}(3)=HF_{I(X)}(3)$.
\end{theorem}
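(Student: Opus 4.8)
The plan is to sandwich $HF_\Lambda(3)$ between two matching quantities. Since $\Lambda \subseteq I(X)$ by \Cref{subset}, the upper bound $HF_\Lambda(3) \le HF_{I(X)}(3)$ comes for free, so the entire task reduces to producing the reverse inequality. For this I would exploit the standard fact that $\dim_\mathbb{K}[\Lambda]_3 = \dim_\mathbb{K}[\init(\Lambda)]_3$, together with the observation used throughout the paper that distinct degree-three monomials in $W$ are linearly independent. Consequently, exhibiting any collection of \emph{distinct} monomials inside $[\init(\Lambda)]_3$ yields a lower bound on $HF_\Lambda(3)$.

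The key step is to assemble the two disjoint families of such monomials that the previous sections have already located. By \Cref{Gi} we have the set-theoretic splitting $[\init(\Lambda)]_3 = T \sqcup G$ with $G = [\init(\Lambda)]_3 \setminus T$, so $T$ and $G$ are disjoint and $HF_\Lambda(3) = |T| + |G|$. The degree-three multiples collected in $T$ are counted by \Cref{consecutiveSum}, while \Cref{g1,g2,g3,g4} place the pairwise disjoint sets $G_1,\dots,G_4$ inside $G$, so that $|G| \ge \sum_{i=1}^{4}|G_i|$, with the latter sum evaluated in \Cref{thm:g}. Combining these gives
\begin{align*}
HF_\Lambda(3) = |T| + |G| &\ge |T| + \sum_{i=1}^{4}|G_i| \\
&= \frac{14d^6+30d^5-40d^4-330d^3-694d^2+1740d-4320}{6!} \\
&\qquad + \frac{120d^3+360d^2-1920d+4320}{6!}.
\end{align*}

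Finally I would carry out the routine polynomial addition in the last display: the $d^3$, $d^2$, $d$, and constant terms recombine to give exactly $\frac{14d^6+30d^5-40d^4-210d^3-334d^2-180d}{6!} = HF_{I(X)}(3)$. Together with the upper bound from $\Lambda \subseteq I(X)$, this forces $HF_\Lambda(3) = HF_{I(X)}(3)$, as desired. All of the conceptual difficulty has already been absorbed into the counting lemmas, so the only genuine obstacle at this stage is bookkeeping: ensuring that $T$ and $G$ contribute no common monomial and that the $G_i$ are mutually disjoint, both of which are guaranteed by the definition $G = [\init(\Lambda)]_3 \setminus T$ and by the disjointness noted in \Cref{Gi}. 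The numerical match, while not deep, must be checked carefully, since any arithmetic slip would destroy the exact cancellation that pins the two Hilbert functions together.
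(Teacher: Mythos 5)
Your proposal is correct and follows essentially the same route as the paper: the upper bound from $\Lambda\subseteq I(X)$, the decomposition $[\init(\Lambda)]_3 = T\sqcup G$ with the lower bound $|T|+\sum_{i=1}^4|G_i|$ supplied by \Cref{consecutiveSum} and \Cref{thm:g}, and the arithmetic verification that the sum equals $HF_{I(X)}(3)$. No substantive differences to report.
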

\begin{proof}
Recall that $\Lambda\subset I(X)$, so we have $HF_{\Lambda}( 3)\leq HF_{I(X)}(3)$.
Now, based on the definitions of $G$ and $T$, we have $HF_{\Lambda}(3)\geq|T|+|G|\geq |T|+ \sum_{i=1}^4 |G_i|$.
These values were computed in  \Cref{consecutiveSum,thm:g}, respectively.
Indeed,
\begin{align*}
|T|+ \sum_{i=1}^4 |G_i|&=\frac{14d^6+30d^5-40d^4-330d^3-694d^2+1740d-4320}{6!} \\
& \hspace{.3in}+\frac{120d^3+360d^2-1920d+4320}{6!}\\
&=\frac{14 d^6 + 30 d^5 - 40 d^4 - 210 d^3 - 334 d^2 - 180 d}{6!}\\
&=HF_{I(X)}(3).
\end{align*}
Therefore the Hilbert Functions of $\Lambda$ and $I(X)$ are equal in degree 3.
\end{proof}

\section*{Acknowledgments}
I would like to thank my PhD advisor Claudia Polini for suggesting this problem, her support and encouragement, and her careful reading. 
I would also like to thank Kuei-Nuan Lin for her careful reading of the manuscript and valuable suggestions. 
This project was partially supported by the Mathematical Endeavors Revitalization Program of the Association for Women in Mathematics.


\begin{thebibliography}{99}
\bibitem{BCV}{W. Bruns, A. Conca, M. Varbaro,  {\em Relations between the minors of a generic matrix}, Adv.
Math. {\bf 244} (2013) 171–206}
\bibitem{BE}{D.A. Buchsbaum and D. Eisenbud, {\em What annihilates a Module?}, Journal of Algebra {\bf 47} (1977), 231-243}
\bibitem{BJ}{L. Bus\'e and P. Jouanolou, {\em On the closed image of a rational map and the implicitization problem}, Journal of Algebra {\bf 265} (2003), 312-357}
\bibitem{CHW}{D. Cox, W. Hoffman, and H. Wang, {\em Syzygies and the Rees algebra}, Journal of Pure and Applied Algebra {\bf 212} (2008), 1787-1796}
\bibitem{Con}{A. Conca, {\em Koszul algebras and Gr\"obner bases of quadrics}, Proceedings of the 29th Symbosium on Commutative Algebra in Nagoya, Japan (2007), 127-133}
\bibitem{C}{D. Cox, {\em The moving curve ideal and the Rees algebra}, Theoretical Computer Science {\bf 392} (2008), 23-36}
\bibitem{E}{D. Eisenbud, {\em Commutative Algebra with a view toward Algebraic Geometry }, Grad. Texts in Math., Springer-Verlag, New York-Berlin-Heidelberg (1995)}
\bibitem{EHU} {D. Eisenbud, C. Huneke, and B. Ulrich, {\em The regularity of Tor and graded Betti numbers}, 
Amer. J. Math. {\bf 128} (2006), 573--605.}
\bibitem{EM}{M. Elkadi and B. Mourrain, {\em Residue and Implicitization Problem for Rational Surfaces}, Applicable Algebra in Engineering, Communication and Computing {\bf 14} (2004), 361-379}
\bibitem{H}{H. T\`ai H\`a, {\em On the Rees algebra of certain codimension two perfect ideals}, Manuscripta Mathematica {\bf 107} (2002), 479-501}
\bibitem{HPPRS}{H. Huang, M. Perlman, C. Polini, C. Raicu, A. Sammartano, {\em Relations between the $2\times2$ minors of a generic matrix}, Advances in Mathematics {\bf 386} (2021)}
\bibitem{HSV}{J. Hong, A. Simis, and W. Vasconcelos, {\em Ideals generated by quadrics}, Journal of Algebra {\bf 423} (2015), 177-189 }
\bibitem{KPU.D}{A. Kustin, C. Polini, and B.Ulrich, {\em Degree bounds for local cohomology}, Proceedings of the London Mathematical Society. {\bf 121} (2015)} 
\bibitem{KPU.R}{A. Kustin, C. Polini, and B. Ulrich, {\em Rational normal scrolls and the defining equations of Rees algebras}, Journal f\"ur die reine und angewandte Mathematik {\bf 650} (2011), 23-65}
\bibitem{KPU.T}{A. Kustin, C. Polini, and B. Ulrich, {\em  The equations defining blowup algebras of height three Gorenstein ideals}, Algebra and Number Theory {\bf 11} (2017), 1489-1525}
\bibitem{L}{N. P. H. Lan, {\em on the Rees algebras of linearly presented ideals}, Journal of Algebra {\bf 420} (2014), 186-200}
\bibitem{LP}{K. Lin and C. Polini, {\em Rees algebras of truncations of complete intersections}, Journal of Algebra {\bf 410} (2014), 35-52  }
\bibitem{M}{S. Morey, {\em Equations of blowups of ideals of codimension two and three}, Journal of Pure and Applied Algebra {\bf 109} (1996), 197-211}
\bibitem{MU}{S. Morey and B. Ulrich {\em Rees algebras of ideals with low codimension}, Proceedings of the American Mathematical Society {\bf 124} (1996), 3653-3661}
\bibitem{UV}{B. Ulrich and W. V. Vasconcelos {\em The equations of Rees algebras of ideals with linear presentation}, 
Mathematische Zeitschrift {\bf 214} (1993), 79-92}
\bibitem{V}{W. V. Vasconcelos, {\em On the equations of Rees algebras}, J. Reine Angew. Math. {\bf 418} (1991), 189--218.} 


 \end{thebibliography}
\end{document}